\theoremstyle{plain} \numberwithin{equation}{section}
\newtheorem{Theorem}{Theorem}
\newtheorem{Lemma}[Theorem]{Lemma}
\newtheorem{Proposition}[Theorem]{Proposition}
\newtheorem{Corollary}[Theorem]{Corollary}
\newtheorem{Remark}[Theorem]{Remark}
\begin{document}

\title[Equiconvergence]
{Equiconvergence of spectral decompositions of  Hill-Schr\"odinger
operators}
\author{Plamen Djakov}

\author{Boris Mityagin}

\address{Sabanci University, Orhanli,
34956 Tuzla, Istanbul, Turkey}
 \email{djakov@sabanciuniv.edu}
\address{Department of Mathematics,
The Ohio State University,
 231 West 18th Ave,
Columbus, OH 43210, USA} \email{mityagin.1@osu.edu}

\begin{abstract}
We study in various functional spaces the equiconvergence of
spectral decompositions of the Hill operator $L= -d^2/dx^2 + v(x), $
$x \in [0,\pi], $ with $H_{per}^{-1} $-potential and the free
operator $L^0=-d^2/dx^2, $ subject to periodic, antiperiodic or
Dirichlet boundary conditions.

In particular, we prove that
$$  \|S_N - S_N^0: L^a \to L^b \| \to 0 \quad
\text{if} \;\; 1<a \leq b< \infty, \;\; 1/a - 1/b <1/2,  $$ where
$S_N$ and $S_N^0 $ are the $N$-th partial sums of the spectral
decompositions of $L$ and $L^0.$   Moreover, if $v \in H^{-\alpha} $
with $1/2 < \alpha < 1$ and $\frac{1}{a}=\frac{3}{2}-\alpha, $ then
we obtain uniform equiconvergence: $\|S_N - S_N^0: L^a \to L^\infty
\| \to 0 $ as $N \to \infty. $
\vspace{1mm}\\
{\it Keywords}: Hill-Schr\"odinger operators, singular potentials,
spectral decompositions, equiconvergence. \vspace{1mm} \\
{\em 2010 Mathematics Subject Classification:} 47E05, 34L40.
\end{abstract}

\maketitle

\section*{Content}
\begin{enumerate}

\item[Section 1.]  Introduction   \hspace{3mm}.
.   .   .   .   .   .   .   .   .   .   .   .   .   .   .   .   .   .
.   .   .   .   .   .   .   .   .   .   .   .   1 \vspace{3mm}

\item[Section 2.] The case of potentials $v \in L^p, \, p\in(1,2] $\hspace{4mm}
    .   .   .   .   .   .   .   .   .   .   .   .   .   .    9
   \vspace{3mm}

\item[Section 3.] $ L^1$-potentials and weakly singular potentials
 \hspace{4mm}   .   .   .   .   .   .   .   .   .   .   13 \vspace{3mm}

\item[Section 4.] The case of potentials $v\in H^{-\alpha}, \,
0 < \alpha<1$\hspace{3mm}   .   .   .   .   .   .   .   .   .   .   .
. 18
 \vspace{3mm}

\item[Section 5.] The case $v \in H^{-1}_{per},$
 $S_N -S_N^0:L^a \to L^b, \; b <\infty$ \hspace{3mm}
 .   .   .   .   .   .   .   .   .   35 \vspace{3mm}

\item[Section 6.]  Appendix: Auxiliary Inequalities
\hspace{4mm}   .   .   .   .   .   .   .   .   .   .    .   .   .
    .   .   .   .   42 \vspace{3mm}

\item[]   References \hspace{2mm}   .   .   .   .   .   .   .   .   .   .
.   .   .   .   .   .   .   .   .   .   .   .   .   .   .   .   .   .
.   .   .   47

\end{enumerate}
\bigskip

\section{Introduction}

  1. Since the earlier days of the theory of eigenfunction expansions
for ordinary differential operators (V. A. Steklov \cite{Ste07,
Ste10}, G. D. Birkhoff \cite{1-Bi08, 2-Bi08}, A. Haar \cite{Ha10})
one of a few central questions was the question about {\em
equiconvergence} of eigenfunction expansions related to the same
ordinary differential operator (o.d.o.) but subject to different
Birkhoff-regular boundary conditions \cite{Na69} or for o.d.o. with
different coefficients but the same (or similar) boundary
conditions.
   To illustrate the problem let us remind two results of
J. Tamarkin \cite{Ta12,Ta17,Ta28}.

Let
\begin{equation}
\label{1} l(y) = \frac{d^n y}{dx^n} + \sum_{k=0}^{n-2} p_k(x)
y^{k}(x),
  \quad 0 \leq x \leq 1, \quad p_k \in L^1 ([0, 1]),
  \end{equation}
and $n$ linearly independent $bc$ (boundary conditions) which are
{\em regular} (see \cite{Na69}) define an operator $L$ in
$L^2([0,1])$. Let
  $\Lambda = \{\lambda_j\}_1^\infty $
  be the set of all eigenvalues of $L$, and
$R(z) = (z - L)^{-1}$ be its resolvent. Define

\begin{equation}
\label{2} S_r(f) = \frac{1}{2 \pi i} \int_{C(r)} R(z) dz,
\end{equation}
  where
$$C(r) = \{z \in \mathbb{C} : |z| = r\}$$
with  radii $r$ chosen in such a way that
\begin{equation}
\label{3} \text{dist} (C(r), \Lambda) \geq \varepsilon > 0,
\end{equation}
and define the $r$-th "partial sum" of the trigonometric Fourier
integral
\begin{equation}
\label{4}  \sigma_r (f) = \frac{1}{\pi} \int_\mathbb{R} \frac{\sin
r(x-\xi)}{x-\xi}
  f(\xi) d\xi, \quad   f \in L^1([0,1]).
\end{equation}

  {\bf Claim 1} (J. Tamarkin \cite{Ta17, Ta28},
 M. Stone \cite{Sto26, Sto27}).
{\em With notations} (\ref{2})-(\ref{4})
   {\em the following holds:
\begin{equation}
\label{5} \lim_{r \to \infty, \,r \in (\ref{3})} \| S_r(f) -
\sigma_r(f)\|_{C(K)} = 0
\end{equation}
for any compact} $K$ in $(0, 1).$

   {\bf Claim 2} (J. Tamarkin \cite{Ta17, Ta28}).
{\em If $L^0$ is the free operator  $ \frac{d^n y}{dx^n}$, i.e., $p_k
= 0,$ then}
\begin{equation}
\label{6}\lim_{r \to \infty, \,r \in (\ref{3})} \| S_r(f, L) -
S_r(f, L^0)\|_{C[0,1]} = 0 \quad \forall f \in L^1([0,1]).
\end{equation}

   These two statements bring our attention to the distinction between
equiconvergence {\em on compacts} inside of the open interval $(0,
1)$ (Claim 1) and {\em on the entire closed interval} $[0, 1]$ (Claim
2). Along the first line of research lately let us mention works of
A. P. Khromov \cite{Kh62, Kh81, 1-Kh95, 2-Kh95},
 A. Minkin \cite{Mi99}, V. S. Rykhlov \cite{Ry80, Ry83, Ry84, Ry86,
Ry90, Ry96}, A. M. Gomilko and G. V. Radzievskii \cite{GR91}, A. S.
Lomov \cite{Lo05}.

Quite exceptional is the paper \cite{Kh75} where a criterion of
equiconvergence on the whole interval for two different
Birkhoff-regular bvp and a given continuous function was found.
\bigskip

2. In the present paper we will focus on {\em (equi)convergence on
the whole interval} in the case of o.d.o. of the second order, or
Hill operators
\begin{equation}
\label{7} L = -  \frac{d^2 y}{dx^2} + v(x), \quad 0 \leq x \leq \pi,
\end{equation}

with $bc$ of three types:

(a) {\em periodic} $Per^+: \quad y(0) = y (\pi), \;\; y^\prime (0) =
y^\prime (\pi) $;

(b) {\em anti--periodic}  $Per^-: \quad  y(0) = - y (\pi), \;\;
y^\prime (0) = - y^\prime (\pi) $;

(c) {\em Dirichlet} $ Dir: \quad y(0) = 0, \;\; y(\pi) = 0. $

By using the quasi--derivatives approach of
 A. Savchuk and A. Shkalikov  \cite{SS00,SS03} (see
also \cite{SS01,SS05,SS06}  and
 R. Hryniv and Ya. Mykytyuk \cite{HM01}--\cite{HM066}),
 we developed in \cite{DM17,DM16,DM21} a
Fourier method for studying the spectral properties of one
dimensional Schr\"odinger operators with periodic complex-valued
singular potentials of the form
\begin{equation}
\label{001} v  = Q^\prime, \quad Q\in L_{loc}^2 (\mathbb{R}), \quad
Q(x+\pi) =Q (x).
\end{equation}
Following A. Savchuk and A. Shkalikov \cite{SS00,SS03}, one may
consider various boundary value problems on the interval $[0,\pi])$
in terms of quasi-derivative
$$
y^{[1]} = y^\prime  - Qy.
$$
In particular, the periodic and anti--periodic boundary conditions
have the form

 $\qquad   Per^+: \quad y(\pi)= y(0), \quad y^{[1]}(\pi)=
y^{[1]}(0), $

  $\qquad  Per^-: \quad y(\pi)= -y(0), \quad y^{[1]} (\pi)= -
y^{[1]} (0). $

Of course, if $Q$ is a continuous function, then $Per^+ $ and
$Per^-$ coincide, respectively, with the classical periodic boundary
condition (a) and (b). The Dirichlet boundary condition has the same
form as in the classical case:

  $\qquad  Dir: \quad y(\pi)= y(0)=0.  $

For each of the boundary conditions $bc= Per^\pm,\, Dir $ the
differential expression
$$
\ell (y) = -(y^{[1]})^\prime -Qy
$$
gives a rise of a closed (self adjoint for real $v$) operator
$L_{bc}=L_{bc}(v)  $ in $H^0 = L^2 ([0,\pi]), $ respectively, with a
domain
\begin{equation}
\label{002} D (L_{Per^\pm}) =\{y \in H^1: \;y^{[1]} \in W^1_1
([0,\pi]), \; Per^\pm \; \text{holds},  \; \ell (y) \in H^0 \},
\end{equation}
or
\begin{equation}
\label{003} D (L_{Dir}) =\{y \in H^1: \; y^{[1]} \in W^1_1 ([0,\pi]),
\; Dir \; \text{holds},  \; \ell (y) \in H^0 \}.
\end{equation}

Let $L^0_{bc}$ denote the free operator $L^0 = - d^2/dx^2 $
considered with boundary conditions $bc.$ It is easy to describe the
spectra and eigenfunctions of $L^0_{bc}$ for $bc = Per^\pm, Dir :$

(a) $ Sp (L^0_{Per^+}) = \{n^2, \; n = 0,2,4, \ldots \};$ its
eigenspaces are  $E^0_n = Span \{e^{\pm inx} \} $ for $n>0 $ and
$E^0_0 = \{ const\}, \; \; \dim E^0_n = 2 $ for $n>0, $ and $\dim
E^0_0 = 1. $

(b) $ Sp (L^0_{Per^-}) = \{n^2, \; n = 1,3,5, \ldots \};$ its
eigenspaces are  $E^0_n = Span \{e^{\pm inx} \}, $ and $ \dim E^0_n
= 2. $

(c)  $ Sp (L^0_{Dir}) = \{n^2, \; n \in \mathbb{N}\};$  its
eigenspaces are $E^0_n = Span \{\sin nx \}, $ and $ \dim E^0_n = 1. $

 Depending on the boundary conditions,
we consider as our canonical orthogonal normalized basis (o.n.b.) in
$ L^2 ([0,\pi]) $ the system $u_k (x), \;k \in \Gamma_{bc}, $ where
\begin{eqnarray}
\label{0.011}
 \mbox{if} \; \; bc =Per^+ & \quad u_k =\exp (ikx),& \;
k \in \Gamma_{Per^+} = 2\mathbb{Z};  \\
\label{0.012} \mbox{if} \; \; bc =Per^- &\quad u_k =\exp (ikx), & \;
k \in \Gamma_{Per^-} = 1+2\mathbb{Z};\\
\label{0.013}
 \mbox{if} \; \; bc =Dir & \quad u_k =\sqrt{2} \sin kx,&
k \in \Gamma_{Dir} = \mathbb{N}.
\end{eqnarray}
Let us notice that  $\{u_k (x), \;k \in \Gamma_{bc}\} $ is a
complete system of unit eigenvectors of the operator $L^0_{bc}.$
They are uniformly bounded, namely
\begin{equation}
\label{0.051} |u_k (x)| \leq \sqrt{2} \quad    \forall k \in
\Gamma_{bc}.
\end{equation}

We set
\begin{equation}
\label{006} H^1_{Per^\pm} = \left \{f \in H^1: \; f(\pi) = \pm f(0)
\right \}, \quad H^1_{Dir} = \left \{f \in H^1: \; f(\pi) = f(0)=0
\right \}.
\end{equation}
One can easily see that $\{e^{ikx}, \; k \in \Gamma_{Per^\pm} \} $
is an orthogonal basis in $H^1_{Per^\pm}$ and $\{\sqrt{2} \sin kx,
\; k \in \mathbb{N} \} $ is an orthogonal basis in $H^1_{Dir}.$ From
here it follows that
\begin{equation}
\label{009} H^1_{bc} = \left \{ f(x) = \sum_{k\in \Gamma_{bc}} f_k
u_k (x) : \; \|f\|^2_{H^1} =\sum_{k\in \Gamma_{bc}} (1+k^2)|f_k|^2 <
\infty \right \}.
\end{equation}

The following proposition gives the Fourier representation of the
operators $L_{Per^\pm}$ and their domains (see
\cite[Prop.10]{DM16}). Let $v$ be a singular potential of the form
(\ref{001}) and let $Q(x) = \sum_{k\in 2\mathbb{Z}} q(k)e^{ikx}
 $ be the Fourier series of $Q$ with respect to the orthonormal
system $\{e^{ikx}, \; k\in 2\mathbb{Z}\}.$  We set
\begin{equation}
\label{0010} V(k)= i \, k \cdot q(k), \quad   k\in 2\mathbb{Z}.
\end{equation}

\begin{Proposition}
\label{prop001} In the above notations, if $y \in H^1_{Per^\pm}, $
then we have $y=\sum_{\Gamma_{Per^\pm}} y_k e^{ikx} \in
D(L_{Per^\pm})$  and  $ L  y =  h = \sum_{\Gamma_{Per^\pm}} h_k
e^{ikx}  \in H^0$ if and only if
\begin{equation}
\label{0011}  h_k = h_k (y) : =  k^2 y_k + \sum_{m\in
\Gamma_{Per\pm}} V(k-m) y_m,  \quad   \sum |h_k|^2 < \infty,
\end{equation}
i.e.,
\begin{equation}
\label{0012} D(L_{Per^\pm}) = \left \{y \in H^1_{Per^\pm}:  \quad
(h_k (y))_{k \in  \Gamma_{Per^\pm}} \in \ell^2 \left
(\Gamma_{Per^\pm} \right ) \right \}
\end{equation}
and
\begin{equation}
\label{0013} L_{Per^\pm} (y)  = \sum_{k \in \Gamma_{Per^\pm}} h_k
(y) e^{ikx}.
\end{equation}

\end{Proposition}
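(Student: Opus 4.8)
The plan is to prove the equivalence by computing the Fourier coefficients $h_n=\frac1\pi\int_0^\pi \ell(y)\,e^{-inx}\,dx$ of $\ell(y)$ for an arbitrary $y=\sum_{k\in\Gamma_{Per^\pm}} y_k e^{ikx}\in H^1_{Per^\pm}$ and matching them against the right-hand side of (\ref{0011}). Since $y\in H^1$ is (absolutely) continuous and bounded while $Q\in L^2_{loc}$ is $\pi$-periodic, the products $Qy$ and $Qy'$ lie in $L^2$ and $L^1$ respectively, with Fourier coefficients $(Qy)_n=\sum_m q(n-m)y_m$ and $(Qy')_n=\sum_m im\,q(n-m)y_m$; I would first record that these, as well as the convolution $\sum_m V(n-m)y_m$, converge absolutely for each fixed $n$ by Cauchy--Schwarz, using $\sum_j|q(j)|^2<\infty$ together with $\sum_m(1+m^2)|y_m|^2<\infty$. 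The one algebraic observation driving everything is the Leibniz identity $vy=Q'y=(Qy)'-Qy'$, which lets me rewrite the differential expression in the convenient quasi-derivative form $\ell(y)=-y''+vy=-(y^{[1]})'-Qy'$.

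For the forward implication, assume $y\in D(L_{Per^\pm})$, so that $y^{[1]}=y'-Qy\in W^1_1([0,\pi])$ and $h:=\ell(y)\in H^0$. Integrating $\int_0^\pi -(y^{[1]})'e^{-inx}\,dx$ by parts moves the derivative onto $e^{-inx}$; the boundary term $[\,y^{[1]}e^{-inx}\,]_0^\pi$ vanishes because $y^{[1]}(\pi)=\pm y^{[1]}(0)$ while $e^{-in\pi}=\pm1$ for $n\in\Gamma_{Per^\pm}$, the two signs conspiring to give equal endpoint values. A second integration by parts on $\int_0^\pi y'\,e^{-inx}\,dx$ (whose boundary term vanishes for the same reason) then yields
\begin{equation*}
h_n = n^2 y_n + in\,(Qy)_n - (Qy')_n = n^2 y_n + \sum_m \big(in-im\big)\,q(n-m)\,y_m = n^2 y_n + \sum_m V(n-m)\,y_m ,
\end{equation*}
upon recognizing $i(n-m)\,q(n-m)=V(n-m)$. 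Parseval's identity gives $\sum_n|h_n|^2=\|h\|_{H^0}^2<\infty$, which is the summability asserted in (\ref{0011}).

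For the converse, suppose $y\in H^1_{Per^\pm}$ and that the sequence $(h_n)$ defined by (\ref{0011}) lies in $\ell^2(\Gamma_{Per^\pm})$, and set $h:=\sum_n h_n e^{inx}\in H^0$. Read on the Fourier side, (\ref{0011}) says precisely that $-y''+vy=h$ in the sense of ($\Gamma_{Per^\pm}$-twisted-periodic) distributions, i.e. $y''=vy-h$. Hence, using Leibniz once more, $(y^{[1]})'=y''-(Qy)'=(vy-h)-(vy+Qy')=-h-Qy'$, which lies in $L^1$ since $h\in L^2$ and $Qy'\in L^1$. Together with $y^{[1]}=y'-Qy\in L^1$ this shows $y^{[1]}\in W^1_1([0,\pi])$; as $y\in H^1_{Per^\pm}$ satisfies $Per^\pm$ and $\ell(y)=h\in H^0$, the membership requirements of (\ref{002}) are met, so $y\in D(L_{Per^\pm})$ and $L_{Per^\pm}y=h=\sum_n h_n e^{inx}$.

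The step I expect to be the main obstacle is the rigorous handling of the singular potential in the two boundary operations: justifying that $vy$ (a priori only an element of $H^{-1}$) has the convolution $\sum_m V(n-m)y_m$ as its genuine $n$-th Fourier coefficient, and that every boundary contribution cancels --- this is exactly where the $Per^\pm$ conditions and the parity of $\Gamma_{Per^\pm}$ enter. The Leibniz rewriting $vy=(Qy)'-Qy'$ is what tames this, since it replaces the ill-defined product $Q'\cdot y$ by the well-defined combination of $(Qy)'\in H^{-1}$ and $Qy'\in L^1$; and in the converse it is the clean identity $(y^{[1]})'=-h-Qy'$ that converts the $\ell^2$-summability of $(h_n)$ into the $W^1_1$-regularity of the quasi-derivative. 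Once these points are secured, matching the two descriptions is the routine convolution computation displayed above.
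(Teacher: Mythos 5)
The paper itself gives no proof of Proposition~\ref{prop001} --- it is quoted from \cite[Prop.~10]{DM16} --- so there is no in-paper argument to compare with. Your route is the natural one: the Leibniz identity $Q'y=(Qy)'-Qy'$, the rewriting $\ell(y)=-(y^{[1]})'-Qy'$ (which is the correct expansion of $-y''+vy$; the paper's displayed $-(y^{[1]})'-Qy$ is evidently a misprint of the Savchuk--Shkalikov expression), integration by parts against $e^{-inx}$ with cancellation of boundary terms from the twisted periodicity, and the convolution computation $h_n=n^2y_n+in(Qy)_n-(Qy')_n=n^2y_n+\sum_mV(n-m)y_m$ are all correct, as is the absolute convergence of the convolutions via Cauchy--Schwarz.

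There is, however, one genuine gap, in the converse direction. Membership in $D(L_{Per^\pm})$ as defined in (\ref{002}) requires that ``$Per^\pm$ holds'' in the quasi-derivative sense, i.e.\ \emph{both} $y(\pi)=\pm y(0)$ \emph{and} $y^{[1]}(\pi)=\pm y^{[1]}(0)$. You dispose of this by writing ``as $y\in H^1_{Per^\pm}$ satisfies $Per^\pm$'', but $H^1_{Per^\pm}$ (see (\ref{006})) encodes only the first condition; the endpoint matching of the quasi-derivative is an additional constraint that must be proved, not read off. The fix is already contained in what you derived: the identity $(y^{[1]})'=-h-Qy'$ is an identity of $\Gamma_{Per^\pm}$-twisted-periodic distributions, and for an absolutely continuous $g$ on $[0,\pi]$ the twisted-periodic distributional derivative equals the pointwise derivative plus a point mass at the seam with weight $\pm g(0)-g(\pi)$. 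Since your right-hand side $-h-Qy'$ lies in $L^1$ and carries no such point mass, the weight must vanish, which is exactly $y^{[1]}(\pi)=\pm y^{[1]}(0)$. As written, your argument only places $y^{[1]}$ in $W^1_1([0,\pi])$ with unverified boundary behaviour, so the ``if'' direction of the proposition is not complete until this step is added.
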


In the case of Dirichlet boundary conditions, we consider expansions
about the o.n.b. $\{\sqrt{2}\sin kx, \, k \in \mathbb{N}\}.$ Let
\begin{equation}
\label{d.1} Q(x) =   \sum_{k=1}^\infty \tilde{q} (k) \sqrt{2} \sin kx
\end{equation}
be the sine Fourier expansion of $ Q. $ We set
\begin{equation}
\label{d.3} \tilde{V} (0) = 0, \qquad \tilde{V}(k) = k \tilde{q} (k)
\quad  \mbox{for} \; k \in \mathbb{N}.
\end{equation}
\begin{Remark}
\label{remDir} Since $v=Q^\prime,$ the function $Q(x) $ is defined up
to a constant. The choice of this constant play no role in the case
of periodic or antiperiodic boundary conditions -- the coefficients
$V(k) $ in (\ref{0010}) do not depend on such a choice. But in the
case of Dirichlet boundary conditions the situation is different.
Since $$ \frac{1}{\pi} \int_0^\pi \sin mx \, dx = \begin{cases}  0  &
\text{for even} \; \; m,\\
2/m  & \text{for odd} \; \; m,  \end{cases}
$$
if one add a constant $C $ to $Q(x) $ then for odd $m$ the
coefficients $\tilde{V} (m) $ will change by $ 2C. $

If $v\in L^1([0,\pi])$ and $\int_0^\pi v(x) dx=0,$ then with $Q(x)
=\int_0^x v(x) dx $ it follows that the numbers $\tilde{V}(k) $ are
the Fourier coefficients of $v$ about the o.n.b. $\{\sqrt{2} \cos kx,
\, k \in \mathbb{Z}_+\}.$  The following choice of constants
guarantees that our formulas agree with the classical ones:
\begin{equation}
\label{const}  Q(0) = 0  \quad  \text{if $\;Q$ is continuous at} \;\;
0.
\end{equation}
\end{Remark}

Next we give the Fourier representation of the operators $L_{Dir}$
and their domains (see \cite[Prop.15]{DM16}). Notice that the matrix
of the operator $L_{Dir}$ (see (\ref{d.5}) below) does not depend on
the choice of constants discussed in the above Remark.
\begin{Proposition}
\label{prop002} In the above notations, if $ y \in H^1_{Dir}, $ then
we have $ y  = \sum_{k=1}^\infty y_k \sqrt{2} \sin kx \in D(L_{Dir})
$ and $ L y = h =  \sum_{k=1}^\infty h_k (y) \sqrt{2} \sin kx \in H^0
$ if and only if
\begin{equation}
\label{d.5} h_k (y) = k^2 y_k + \frac{1}{\sqrt{2}} \sum_{m=1}^\infty
\left ( \tilde{V} (|k-m|) - \tilde{V} (k+m) \right ) y_m
\end{equation}
and $   \sum |h_k (y)|^2 < \infty, $ i.e.,
\begin{equation}
\label{d.6} D(L_{Dir}) = \left \{ y \in H^1_{Dir} : \; (h_k
(y)_1^\infty \in \ell^2 (\mathbb{N}) \right \},
 \end{equation}
\begin{equation}
\label{d.7}
 L_{Dir} (y) = \sum_{k=1}^\infty h_k (y) \sqrt{2} \sin kx.
\end{equation}

\end{Proposition}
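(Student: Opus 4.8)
The plan is to compute the Fourier coefficients $h_k(y)=\langle L_{Dir}y,u_k\rangle$ directly, where $\langle f,g\rangle=\frac1\pi\int_0^\pi f\bar g\,dx$ is the inner product that makes $\{u_k=\sqrt2\sin kx\}_{k\in\mathbb N}$ an orthonormal basis. The point is to avoid differentiating the singular $Q$ by working with the sesquilinear form attached to $L_{Dir}$ rather than the differential expression $\ell$ itself. Writing $v=Q'$ and integrating the term $\int Q'y\bar z$ by parts, the form is
\[
B[y,z]=\frac1\pi\int_0^\pi y'\,\overline{z'}\,dx-\frac1\pi\int_0^\pi Q\bigl(y'\,\overline z+y\,\overline{z'}\bigr)\,dx ,
\]
which is well defined for $y,z\in H^1_{Dir}$ and $Q\in L^2$ (using $H^1\hookrightarrow L^\infty$ to bound $\overline z,\overline{z'}\,Q$). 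First I would record that, by the Savchuk--Shkalikov quasi-derivative construction underlying \eqref{003}, one has $\langle L_{Dir}y,z\rangle=B[y,z]$ for $y\in D(L_{Dir})$ and $z$ in the form domain; so it suffices to evaluate $B[y,u_k]$ and identify it with the claimed $h_k(y)$ of \eqref{d.5}.

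Next I would carry out the two elementary computations. For $y=\sum_m y_m u_m\in H^1_{Dir}$ the derivative $y'=\sum_m y_m\sqrt2\,m\cos mx$ converges in $L^2$, and since $\{1\}\cup\{\sqrt2\cos kx\}$ is orthonormal for $\langle\cdot,\cdot\rangle$, the first term of $B[y,u_k]$ collapses to $k^2y_k$. For the second term I would use $(y\,u_k)'=y'u_k+y\,u_k'$ together with the product-to-sum identity $u_m u_k=2\sin mx\sin kx=\cos(|k-m|)x-\cos(k+m)x$, so that
\[
-\frac1\pi\int_0^\pi Q\,(y\,u_k)'\,dx
=\frac1{\sqrt2}\sum_m y_m\Bigl(|k-m|\,\tilde q(|k-m|)-(k+m)\,\tilde q(k+m)\Bigr),
\]
where $\frac1\pi\int_0^\pi Q\sin jx\,dx=\frac1{\sqrt2}\tilde q(j)$ by \eqref{d.1}. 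Invoking $\tilde V(j)=j\tilde q(j)$ from \eqref{d.3} (and $\tilde V(0)=0$ to handle $m=k$) turns this into $\frac1{\sqrt2}\sum_m(\tilde V(|k-m|)-\tilde V(k+m))y_m$. Adding the two pieces yields exactly \eqref{d.5}. Notice that every integration by parts has put all derivatives on the smooth factor $\sin$ and left only $Q\in L^2$ against an $L^2$ trigonometric series, so no pairing with the singular $v=Q'$ is ever needed. The domain statement \eqref{d.6} and the formula \eqref{d.7} then follow from Parseval and completeness of $\{u_k\}$: since $L_{Dir}y=h\in H^0=L^2$ is equivalent to square-summability of its coefficients, we get $y\in D(L_{Dir})$ iff $(h_k(y))\in\ell^2(\mathbb N)$.

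The main obstacle is the rigorous bridge between the quasi-derivative definition of $L_{Dir}$ in \eqref{003} and the form $B$, and in particular justifying the interchange of the sum over $m$ with the integral against $Q$ when $Q$ is merely $L^2$: this requires controlling the partial sums of $(y\,u_k)'$ in $L^2$, which is legitimate because $y\in H^1_{Dir}$ forces $\sum_m m^2|y_m|^2<\infty$, whence the partial sums converge in $L^2$ and may be paired termwise with $Q$. An alternative I would keep in reserve is to deduce the Dirichlet representation from the already-established periodic/antiperiodic one in Proposition~\ref{prop001} by odd extension of $y$ to $[-\pi,\pi]$, under which $u_k=\sqrt2\sin kx$ becomes the odd combination $\tfrac1{\sqrt2\,i}(e^{ikx}-e^{-ikx})$; the difference $\tilde V(|k-m|)-\tilde V(k+m)$ then appears as the antisymmetric part of the periodic matrix, at the cost of bookkeeping relating $V$ to $\tilde V$ and tracking the constant ambiguity flagged in Remark~\ref{remDir}.
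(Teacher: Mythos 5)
The paper does not actually prove Proposition~\ref{prop002}: it is quoted from \cite[Prop.~15]{DM16}, so there is no internal argument to compare against. Judged on its own terms, your computation of the matrix entries is correct: the first term of $B[y,u_k]$ does collapse to $k^2y_k$, the product-to-sum identity and the sine coefficients of $Q$ give exactly $\frac{1}{\sqrt2}\sum_m(\tilde V(|k-m|)-\tilde V(k+m))y_m$ with the right signs, and $\tilde V(0)=0$ handles $m=k$. The termwise pairing with $Q\in L^2$ is justified as you say, since $y\in H^1_{Dir}$ makes the partial sums of $(yu_k)'$ converge in $L^2$. One caution on the ``bridge'': the differential expression as displayed in the paper, $\ell(y)=-(y^{[1]})'-Qy$, is an abbreviation (the full Savchuk--Shkalikov expression is $\ell(y)=-(y^{[1]})'-Qy^{[1]}-Q^2y$, which is what formally equals $-y''+vy$); with that expression the identity $\langle\ell(y),z\rangle=B[y,z]$ for $y\in D(L_{Dir})$, $z\in H^1_{Dir}$ follows by one integration by parts, using $y^{[1]}\in W^1_1\subset C$ and $z(0)=z(\pi)=0$ to kill the boundary term. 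You should make this explicit rather than appeal to ``the construction underlying (\ref{003})''.

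The genuine gap is the converse half of the equivalence. Your argument shows: if $y\in D(L_{Dir})$ then the Fourier coefficients of $L_{Dir}y$ are given by (\ref{d.5}) and lie in $\ell^2$. But (\ref{d.6}) also asserts the reverse inclusion: if $y\in H^1_{Dir}$ and the numbers $h_k(y)$ defined by (\ref{d.5}) are square-summable, then $y\in D(L_{Dir})$, i.e.\ $y^{[1]}\in W^1_1([0,\pi])$ and $\ell(y)=\sum h_k(y)u_k$. ``Parseval and completeness'' do not deliver this; what is needed is to pass from the weak identity $B[y,u_k]=\langle h,u_k\rangle$ for all $k$ (hence $B[y,z]=\langle h,z\rangle$ for all $z\in H^1_{Dir}$) to the distributional equation $-(y^{[1]})'=h+Qy^{[1]}+Q^2y$, whose right-hand side lies in $L^1$ because $y^{[1]}=y'-Qy\in L^2$ and $Q\in L^2$; this is what forces $y^{[1]}\in W^1_1$ and places $y$ in the domain (\ref{003}). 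Without this step you have proved only one implication of the stated ``if and only if''. Your reserve strategy (odd extension and reduction to Proposition~\ref{prop001}) would inherit the same issue, since the domain identification must still be transported back to $[0,\pi]$.
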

\bigskip

3. We study the equiconvergence  of  spectral decompositions of the
operators $L_{Per^\pm}, L_{Dir} $ and, respectively, $L^0_{Per^\pm},
\,L^0_{Dir} $ by using their Fourier representations with respect to
the corresponding o.n.b. (\ref{0.011})--(\ref{0.013}). In view of
Propositions~\ref{prop001} and \ref{prop002}, each of the operators
$L= L_{Per^\pm}, L_{Dir}$ has the form
\begin{equation}
\label{4.5} L = L^0 +V,
\end{equation}
where  the operators $L^0 $ and $V$ are defined, respectively, by
their action on the sequence of Fourier coefficients of  $y =
\sum_{\Gamma_{Per^\pm}} y_k \exp {ikx} \in H^1_{Per^\pm}$ or $y =
\sum_{\Gamma_{Dir}} y_k \sqrt{2} \sin kx \in H^1_{Dir}$ as follows:
\begin{equation}
\label{4.6} L^0: \; (y_k) \to (k^2 y_k), \quad k \in
\Gamma_{Per^\pm}, \; \Gamma_{Dir},
\end{equation}
\begin{equation}
\label{4.7} V:\; (y_m) \to (z_k), \quad z_k = \sum_{m} V(k-m) y_m,
\quad k, m \in \Gamma_{Per^\pm}
\end{equation}
for $bc = Per^\pm $ with $V(k)$  given by (\ref{0010}), and
\begin{equation}
\label{d.10} V:\; (y_m) \to (z_k), \quad z_k = \frac{1}{\sqrt{2}}
\sum_{m=1}^\infty \left ( \tilde{V} (|k-m|) - \tilde{V} (k+m) \right
) y_m, \quad k, m \in \mathbb{N}
\end{equation}
for $bc = Dir$ with  $\tilde{V} (k) $ given by (\ref{d.3}).

 These matrix representations could be used (see \cite{DM16,
DM21}) to justify the standard resolvent formula
\begin{equation}
\label{res} R_\lambda = R^0_\lambda + R^0_\lambda VR^0_\lambda +
\sum_{m=2}^\infty R^0_\lambda (VR^0_\lambda)^m, \quad \lambda \not
\in Sp (L_{bc}).
\end{equation}

Let $\Pi_N=\Pi_N (\omega, h), \; N \in \mathbb{N}, \; \omega,h>0 $
be the rectangle
\begin{equation}
\label{ec001} \Pi_N = \{z= x+iy: \; -\omega \leq x \leq N^2 +N, \;
\; |y|\leq h \},
\end{equation}
and let
\begin{equation}
\label{ec20}
 S_N    = \frac{1}{2 \pi i} \int_{\partial \Pi_N}
R_\lambda  d\lambda, \quad S^0_N    = \frac{1}{2 \pi i}
\int_{\partial \Pi_N} R^0_\lambda  d\lambda.
\end{equation}
The spectra of operators $L_{Per^\pm} $ are discrete; there are
numbers $\omega_0 = \omega_0 (v), \,h_0 = h_0 (v) $ and $N_0 = N_0
(v) $ such that for $ \omega \geq \omega _0, \, h\geq h_0  $ and
$N\geq N_0 $ the rectangle (\ref{ec001}) contains all periodic,
antiperiodic or Dirichlet eigenvalues which real part does not exceed
$N^2 +N$ (see \cite{DM16,DM21}).

By (\ref{res}) we have
\begin{equation}
\label{ec2a}
 S_N -S_N^0    = \frac{1}{2 \pi i} \int_{\partial \Pi_N}
(R_\lambda -R^0_\lambda) d\lambda = T_N  + B_N,
\end{equation}
where
\begin{equation}
\label{ec21} T_N = \frac{1}{2 \pi i} \int_{\partial \Pi_N}
R^0_\lambda VR^0_\lambda d\lambda
\end{equation}
\begin{equation}
\label{ec22} B_N  = \frac{1}{2 \pi i} \int_{\partial \Pi_N}
\sum_{m=2}^\infty R^0_\lambda (VR^0_\lambda)^m d\lambda.
\end{equation}
The representation (\ref{ec2a}) is crucial for our approach to
equiconvergence in the case of singular potentials. The operator
$B_N$ gives the "easy" part $S_N-S_N^0;$ we estimate from above its
norm by integrating the norm of the integrand in (\ref{ec22}).
However, when estimating the norm of $T_N $  it is essential first to
integrate over $\partial \Pi_N $ using residuum techniques -- see
Sections 3--5 for details.
\bigskip

4. How do the deviation-operators
\begin{equation}
\label{17}  S_N - S_N^0 : X \to Y, \quad  N \to \infty,
\end{equation}
behave for different pairs of functional Banach spaces?\\
 How does this behavior depend on the potential $v$, or
  on parameters $p$ or $\alpha$ if
\begin{equation}
\label{18} v \in L^p, \; \; 1 \leq p \leq 2, \quad \text{and} \quad
  v \in H^{- \alpha}, \; \; 0  < \alpha \leq 1?
\end{equation}
If $Y =\mathcal{C}= C([0, \pi])$ or $L^\infty ([0, 1])$ in
(\ref{17}) we speak on {\em uniform equiconvergence}.

V. A. Marchenko \cite{Ma86} proved, in the case $bc = Dir,$ that
\begin{equation}
\label{19}  \|(S_N - S_N^0)f \|_\infty \to 0
\end{equation}
if $v \in L^1 ([0, \pi])$ and  $f \in L^2([0, \pi]).$  V. A.
Vinokurov and
  V. A. Sadovnichii \cite{ViSa01} showed (\ref{19})
in the case when $bc = Dir $,  $v$ is real-valued such that
\begin{equation}
\label{20} v = Q^\prime \quad \text{with} \; Q \; \text{being a
periodic function of bounded variation},
\end{equation}
and $f \in L^1$.

One of the main results of our paper is the following assertion
(see Theorems~\ref{Thm2.1} and \ref{Thm3.10})

{\em Suppose $\,v $ is complex-valued, and $bc = Dir, Per^+ $ or $
Per^-$. If $v \in L^1$ then
\begin{equation}
\label{21}  \|S_N - S_N^0 : L^1 \to \mathcal{C} \| \to 0.
\end{equation}

If $v$ satisfies (\ref{20}), then
\begin{equation}
\label{22} \|(S_N - S_N^0)f \|_{\infty} \to 0  \quad    \forall  f
\in L^1([0, \pi]).
\end{equation}}

   Notice that in this statement
\begin{itemize}
\item $bc$ is not only $Dir$ but $Per^+$ and
$Per^-$ as well;

\item  $v$ is complex-valued;

\item  if $v \in L^1$ the claim (\ref{21}) is made for norms of the
deviation-operators.

\end{itemize}

(The latter means an improvement of
Tamarkin's second theorem in the case of Hill operators as well.)

  For the family of $L^p$-spaces we extend (\ref{19})
  to claim (see Theorem~\ref{Thm2.12}
  and  Corollary~\ref{Cor2.13} in Sect. 2) the following:

  {\em  Let $v \in L^p, \; 1 \leq p \leq 2,\;$  $1 \leq a \leq  b
  \leq \infty$ and
\begin{equation}
\label{23}  1/p + 1/a - 1/b <2.
\end{equation}
  Then
\begin{equation}
\label{24}  \|S_N - S_N^0: L^a \to L^b \| \leq C \|v\|_p \,
N^{-\gamma},
\end{equation}
  where}
\begin{equation}
\label{25}\gamma = (1 - 1/p) + (1 - (1/a - 1/b)).
\end{equation}

  In Marchenko's case (\ref{19})
  $$p=1, a=2, b=\infty \quad \text{so} \; \; \gamma = 0 + (1- 1/2) =
  1/2;$$
therefore (\ref{24}) and (\ref{25}) imply that
\begin{equation}
\label{26} \|S_N - S_N^0 \,: L^2 \to \mathcal{C} \| \leq C \|v\|_1
\, N^{-1/2}
\end{equation}
For $bc = Dir$  I. V. Sadovnichaya \cite{Sa09, 1-Sa10} considered
the problem of {\em uniform equiconvergence} for Hill operators,
respectively, with singular potentials $v \in H^{- \alpha}, \; 1/2 <
\alpha < 1 $ and $ v \in H^{-1}; $ see related papers
\cite{1-Sa08,2-Sa08,2-Sa10,Sa11} also.

   We extend analysis to $bc = Per^+$ and $Per^-$ and prove
   uniform equiconvergence as
\begin{equation}
\label{27} \|S_N - S_N^0: L^1 \to \mathcal{C} \|  \to 0 \quad
\text{if} \;\; v \in H^{- 1/2},
\end{equation}
and moreover, for $ v \in H^{- \alpha}, \; \frac{1}{2} \leq \alpha <1 $
we have
\begin{equation}
\label{28} \|S_N - S_N^0 : L^a \to \mathcal{C} \|  \to 0, \quad 1/a=
3/2 -\alpha.
\end{equation}
(See more precise and
complete claims in Sections 3 and 4.)
  The cases $ v \in H^{-1}, \; bc  = Per^\pm, $ remain unsolved although
for $bc = Dir $ it has been successfully done by
 I. V. Sadovnichaya \cite{1-Sa10}. \bigskip

{\em Remark.} In our main statements on {\em uniform} equiconvergence
(Theorems~\ref{Thm2.1}, \ref{Thm3.10}, \ref{thmec}) the proofs give
stronger claims on {\em absolute} convergence of Fourier coefficient
sequences $(f_k)$, so the $L^\infty$-norms in the image-spaces could
be changed to the Wiener norms $\|f\|_W = \sum |f_k|$. The inequality
(\ref{0.051}) guarantees that the Wiener norm is stronger than the
$L^\infty$-norm.
 \bigskip

5. Multidimensional analogs of the above questions are more
complicated because the structure of the spectrum and eigenfunctions
of the free operator, say, in the case of $L = - \Delta + v(x),\,$ $
x \in G \subset \mathbb{R}^m,$ $ G$ being a good bounded domain in
$\mathbb{R}^m,$ by itself is formidable problem -- see for example
\cite{A63}. It does not give any ready answers to be used in
analysis of equiconvergence. Still let us mention \cite{MS91} where
one can find an example of multidimensional equiconvergence in the
case of polyharmonic operators $(- \Delta)^a$ under strong
assumptions on the dimension $m$ and the order $2a$. Moreover, in
the case of 1D Dirac operators $L$ (see \cite{DM15, Mit04}), when
basic spectral properties of the free operator $L^0$ subject to
periodic, antiperiodic or Dirichlet $bc$ are well known, a series of
statements on equiconvergence of spectral decompositions has been
proven in \cite{Mit04}. In \cite{DM23,DM24}, we considered the Dirac
operator $L$ subject to arbitrary regular $bc.$ We  constructed
canonical Riesz bases of root functions of $L^0,$  used these bases
to develop Fourier analysis of $L,$ proved existence of Riesz type
spectral decompositions of $L$ and established for potentials $v \in
H^\alpha, \, \alpha >0 $ uniform equiconvergence of the spectral
decompositions of Dirac operators $L$ and $L^0,$ subject to
arbitrary regular $bc.$

The general approach and framework in this paper are similar to
those in \cite{Mit04} (in the case $v \in L^p, \; 1 < p \leq 2 $)
and \cite{DM23,DM24} (in the case $v \in H^{-1}$). \bigskip

   {\em  Acknowledgements}. We thank Professor Arkadi Minkin
   for the very useful discussions
of many aspects of the equiconvergence theory.

P. Djakov acknowledges the hospitality  of Department of Mathematics
and the support of Mathematical Research Institute of The Ohio State
University, July - August 2011.

B. Mityagin acknowledges the support of the Scientific and
Technological Research Council of Turkey and the hospitality of
Sabanci University, April--June, 2011.

\section{The case of potentials $v \in L^p, \, p\in(1,2]. $}

 1. First we consider the case of potentials $v \in L^p ([0,\pi]) \, p\in(1,2], $
 which illustrates all  crucial steps in our scheme
 but technically is more simple.  We normalize the Lebesgue measure
 so that the interval $[0,\pi]$ has measure one, and set
 $$
\|f\|_p = \|f| L^p \|= \left ( \frac{1}{\pi} \int_0^\pi |f(x)|^p
\right )^{1/p}.
 $$

If $F$ and $G$ are two functions then we write $F \lesssim G  $  for
$x \in D $ (or simply $F \lesssim G  $ when  $D$ is known by the
context) if there is a constant $C>0 $ such that
$$                      F(x) \leq C\cdot G(x) \quad  \forall x \in D. $$
We write $F \sim G $ if we have simultaneously $F \lesssim G  $ and
$G \lesssim F.  $

  \begin{Theorem}
\label{Thm2.1}
 Let a potential $v$ be an $L^p$-function,
 $ 1 < p \leq 2, \; 1/p + 1/q =
 1.$ Then, for $bc= Per^\pm, \, Dir,$
\begin{equation}
\label{2.1} \|S_N - S_N^0 \,: L^1 \to \mathcal{C}\| \lesssim N^{-
1/q}, \quad N \geq N_* (\|v\|_p).
\end{equation}
\end{Theorem}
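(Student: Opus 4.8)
The plan is to use the decomposition $S_N - S_N^0 = T_N + B_N$ from (\ref{ec2a}) and to bound the two pieces separately, ultimately reducing everything to estimates on the Fourier side via the representations (\ref{4.6})--(\ref{d.10}). Since we want an operator norm bound $L^1 \to \mathcal{C}$, and since (by the Remark following (\ref{28})) we actually intend to control the Wiener norm $\sum_k |f_k|$ of the image, the natural strategy is to show that for $f \in L^1$ with $\|f\|_1 \le 1$, the Fourier coefficients of $(T_N + B_N)f$ are absolutely summable with total mass $\lesssim N^{-1/q}$. The key input is that the matrix entries of $V$ are controlled by the Fourier coefficients of $v$: for $bc = Per^\pm$ we have $V(k-m) = i(k-m)q(k-m)$ and, crucially, the Hausdorff--Young inequality gives $\big(\sum_{j} |q(j)|^{q}\big)^{1/q} \lesssim \|v\|_p / |j|$ type control, so the ``singular'' factor $ik$ in (\ref{0010}) is what makes the potential genuinely unbounded and drives the exponent $1/q$.

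First I would handle the main term $T_N$, which by (\ref{ec21}) is $\frac{1}{2\pi i}\int_{\partial \Pi_N} R^0_\lambda V R^0_\lambda\, d\lambda$. The essential point emphasized after (\ref{ec22}) is that one must integrate over $\partial \Pi_N$ \emph{first}, using residue calculus, before taking norms. Because $R^0_\lambda$ is diagonal with entries $(k^2 - \lambda)^{-1}$, the $(k,m)$ matrix entry of $T_N$ is $V(k-m)$ times a contour integral of $(k^2-\lambda)^{-1}(m^2-\lambda)^{-1}$ over $\partial \Pi_N$. By residues this integral is nonzero only when exactly one of $k^2, m^2$ lies inside $\Pi_N$ (i.e.\ one index is ``small'', one is ``large''), and then it equals $(k^2 - m^2)^{-1}$ up to sign. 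Thus the matrix of $T_N$ is supported on the off-diagonal ``corner'' blocks and has entries $\sim V(k-m)/(k^2-m^2)$. Applied to $f$ with coefficients $(f_m)$, summing $\sum_k |\sum_m \text{(entry)} f_m|$ reduces by Hölder (using $1/p + 1/q = 1$) and the Hausdorff--Young bound on $(q(j))$ to a convolution-type sum; the factor $|k-m|/(k^2-m^2) = 1/|k+m|$ together with the restriction that one index is $\ge N$ yields the gain $N^{-1/q}$. I would carry out this estimate uniformly in the three boundary conditions, noting that for $Dir$ the extra term $\tilde V(k+m)$ in (\ref{d.10}) is handled identically since $k+m$ is then large.

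Next I would dispose of the tail $B_N$ from (\ref{ec22}), which collects the higher-order terms $\sum_{m\ge 2} R^0_\lambda (V R^0_\lambda)^m$. Here, as the text notes, one may be cruder: estimate the norm of the integrand directly and integrate. The operator $V: \ell^2 \to \ell^2$ has a Hilbert--Schmidt-type norm controlled by $\|v\|_p$, and on the contour $\partial \Pi_N$ the resolvent $R^0_\lambda$ is small (the relevant weighted norms decay in $N$), so the Neumann series converges and its sum is of strictly higher order in $N^{-1}$ than the $T_N$ contribution, hence negligible relative to $N^{-1/q}$ once $N \ge N_*(\|v\|_p)$. The main obstacle I anticipate is the $T_N$ estimate: getting the sharp exponent $1/q$ requires exploiting the cancellation $1/(k^2-m^2) = 1/((k-m)(k+m))$ so that the singular weight $|k-m|$ in $V$ is absorbed against $|k-m|$ in the denominator, leaving the decaying factor $1/|k+m|$; combining this with Hausdorff--Young (valid precisely because $1<p\le 2$) and tracking the constraint ``$k$ or $m$ exceeds $N$'' is the delicate bookkeeping that produces the stated rate. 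The uniformity over $Per^\pm$ and $Dir$, and the reduction of the $L^1 \to \mathcal{C}$ norm to the Wiener norm via (\ref{0.051}), are then routine.
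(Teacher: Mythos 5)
Your strategy is sound and would prove the theorem, but it is not the route the paper takes for Theorem~\ref{Thm2.1}; it is essentially the machinery the paper reserves for Section~3 ($L^1$-potentials). The paper's own proof of (\ref{2.1}) never splits off $T_N$ and never computes residues: because $p>1$, the exponent $r$ with $1/r=\frac12(1+1/p)$ satisfies $r<2$, so the first-order term $U(2)=R^0VR^0$ is bounded on the vertical line $\Lambda_N$ by $A^2(z;r)$ exactly like every higher-order term, via the factorization (\ref{diag.6}) through Hausdorff--Young and H\"older maps; one then integrates $\int_{\Lambda_N}A^2(z;r)\,dy\lesssim N^{-2(1-1/r)}=N^{-1/q}$ (Corollary~\ref{cor17A}) and sums a geometric series. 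The ``integrate first by residues'' device is only forced at $p=1$, where $\int_{\Lambda_N}A^2(z;1)\,dy=\infty$ (see (\ref{111})) and the first-order term must be treated through its matrix (\ref{3.21a}). Your approach therefore does more work than necessary, but it buys something: it isolates $T_N$ as the dominant contribution (your $B_N$ tail comes out $O(N^{-1})$, strictly better than $N^{-1/q}$), and your $T_N$ estimate --- $(V(j))\in\ell^q$ by Hausdorff--Young, paired by H\"older against the line-weights $w_N(j)=\sum_{m-k=j,\,(m,k)\in X(N)}|m^2-k^2|^{-1}\lesssim\min(N^{-1},N|j|^{-2})$, whose $\ell^p$-norm is $\sim N^{-1/q}$ --- does yield the sharp rate.

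Two steps need repair before this is a proof. First, your justification of the $B_N$ bound is false as stated: multiplication by $v\in L^p$, $p\le 2$, is not bounded on $L^2$, and the convolution matrix $(V(k-m))$ is not Hilbert--Schmidt, so ``$V:\ell^2\to\ell^2$ has a Hilbert--Schmidt-type norm controlled by $\|v\|_p$'' cannot be the engine of convergence. What is actually small on $\Lambda_N$ is the composite $D=\tilde R^0\mathcal F V\mathcal F^{-1}$ acting on $\ell^1$ (or $\ell^r$), with $\|D\|\le 2A(z;1)\|v\|_1\lesssim N^{-1}\log N$ as in (\ref{3.6})--(\ref{3.9}); the resolvent's smoothing compensates for the unboundedness of $V$, and then $\|B_N\|\lesssim\|v\|_1^2\int_{\Lambda_N}A^3(z;1)\,dy\lesssim N^{-1}$. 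Second, in the Dirichlet case the $\tilde V(m+k)$ term is not ``handled identically'': there the cancellation leaves the factor $1/(m-k)$, which is not uniformly $O(1/N)$ on $X(N)$, and the bookkeeping runs over the lines $m+k=s>N$ with per-line weight $\lesssim s^{-1}\log\bigl(s/(|s-2N|+1)\bigr)$; the rate $N^{-1/q}$ survives, but only because $\int_0^1(\log(1/x))^p\,dx<\infty$, which is a genuinely different estimate from the $|m-k|\le H$ versus $|m-k|>H$ splitting that handles the $\tilde V(|m-k|)$ term. Both points are fixable, so the proposal is correct in substance, but neither is the ``routine'' step you label it as.
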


\begin{proof} By (\ref{ec001})--(\ref{ec22}),
\begin{equation}
\label{2.2} S_N - S_N^0 = \frac{1}{2 \pi i} \int_{\partial \Pi_N}
(R(z) - R^0(z)) dz,
\end{equation}
where
\begin{equation}
\label{2.4} R(z) - R^0 (z) =  \sum_{m=2}^\infty U(m),
\end{equation}
and
\begin{equation}
\label{2.5} U(1) = R^0, \quad U(k+1) = U(k) V R^0, \quad k \geq 1
\end{equation}
with understanding that (\ref{2.2})--(\ref{2.5}) hold if all the
operators are well-defined and the series and integrals do converge.
We justify the latter by using inequalities proven in Section 6,
Appendix.

   The following diagrams  help:
\begin{equation}
\label{2.6} \ell^r \stackrel{\mathcal{F}^{-1}}{\longrightarrow}
L^\rho \stackrel{V}{\longrightarrow} L^r
\stackrel{\mathcal{F}}{\longrightarrow} \ell^\rho
\stackrel{\tilde{R}^0}{\longrightarrow} \ell^r, \quad D=\tilde{R}^0
\mathcal{F} V \mathcal{F}^{-1},
\end{equation}
with $r$ and $\rho $ chosen so that
\begin{equation}
\label{2.7} (a) \; \; \frac{1}{r} + \frac{1}{\rho} = 1 \quad
\text{and} \quad (b) \;\; \frac{1}{\rho} + \frac{1}{p} = \frac{1}{r}
\;\;  \Rightarrow (c) \; \; 1 > \frac{1}{r} = \frac{1}{2} \left (1 +
\frac{1}{p} \right ) > \frac{1}{2},
\end{equation}
and, for $m \geq 2,$
\begin{equation}
\label{diag.6} L^1 \stackrel{\mathcal{F}}{\longrightarrow}
\ell^\infty \stackrel{\tilde{R}^0}{\longrightarrow} \ell^r
\stackrel{D^{m-2}}{\longrightarrow} \ell^r
\stackrel{\mathcal{F}^{-1}}{\longrightarrow} L^\rho
\stackrel{V}{\longrightarrow} L^r
\stackrel{\mathcal{F}}{\longrightarrow} \ell^\rho
\stackrel{\tilde{R}^0}{\longrightarrow} \ell^1
\stackrel{\mathcal{F}^{-1}}{\longrightarrow} \mathcal{C},
\end{equation}
where
\begin{equation}
\label{2.8}   \mathcal{F} : f \to (\langle f, u_k \rangle)_{k \in
\Gamma_{bc}}
\end{equation}
puts in correspondence  to $f$ its sequence of Fourier coefficients
with respect to the canonical o.n.b. (\ref{0.011})--(\ref{0.013}),
  $$ \tilde{R}^0 : (t_k) \to {\frac{t_k}{z - k^2} : k \in
\Gamma_{bc} }$$ is a multiplier-operator in sequence spaces, and
$$  \mathcal{F}^{-1} : (t_k) \to \sum_{k \in \Gamma} t_k u_k(x) $$ is
the restoration of a function from the sequence of its Fourier
coefficients.

These are algebraic definitions but (\ref{2.7}a) and (\ref{0.051})
guarantee that
\begin{equation}
\label{F} \|\mathcal{F}: L^r \to \ell^\rho \| \leq \sqrt{2}, \quad
\|\mathcal{F}^{-1}: \ell^r \to L^\rho \| \leq \sqrt{2}
\end{equation}
(see Hausdorff-Young Theorem, \cite[Theorem XII.2.3]{Zy90}), and
(\ref{2.7}b) -- with H\"older Inequality -- shows that
\begin{equation}
\label{2.9}  \|V : L^\rho \to L^r \| = \|v\|_p.
\end{equation}
Analogously, in the case of multiplier-operators $ M: e_k \to m_k e_k
$ in sequence spaces we have, for $1 \leq  a < b \leq \infty, $
\begin{equation}
\label{mult} \|M: \ell^b \to \ell^a \| = \|(m_k)| \ell^c\| \quad
\text{with} \;\; 1/c+1/b = 1/a.
\end{equation}
Now Diagram (\ref{2.6}) shows that
\begin{equation}
\label{2.10} \|D: \ell^r \to \ell^r \|= \|\tilde{R}^0 \mathcal{F} V
\mathcal{F}^{-1}
  : \ell^r \to  \ell^r\| \leq 2 \|\tilde{R}^0|\ell^p\| \cdot \|v\|_p.
\end{equation}
Diagram (\ref{diag.6}) gives a factorization of the operator $U(m),
\, m \geq 2, $ so we obtain
\begin{equation}
\label{2.11} \|U(m) : L^1 \to \mathcal{C}\| \leq 4
\|\tilde{R}^0|\ell^r\| \cdot \|D\|^{m-2} \cdot \|\tilde{R}^0|\ell^r\|
\cdot \|v\|_p.
\end{equation}

Next we will use (\ref{2.10}), (\ref{2.11}) and inequalities from
Appendix to get estimates for the norms of operators
(\ref{2.2})--(\ref{2.4}).  The  horizontal sides and left vertical
side of $\partial \Pi_N $ could be sent to infinity (see Appendix,
Lemmas~\ref{lem405} and  \ref{lem407}), so
\begin{equation}
\label{2.12}  S_N - S_N^0 = \frac{1}{2 \pi i} \int_{\Lambda_N}
\sum_{m=2}^\infty U(m) dy
\end{equation}
with
\begin{equation}
\label{Lambda} \Lambda_N =\{z = N^2 + N + iy, \; y \in \mathbb{R} \}
\end{equation}
if we succeed to get good norm estimates on the line $\Lambda_N.$

 Notice that in (\ref{2.10}), for $z \in \Lambda_N, $
\begin{equation}
\label{2.13}
 \|\tilde{R}^0| \ell^p \| = A(z; p)=
 \left (\sum_{k\in \Gamma_{bc}} \frac{1}{|z-k^2|^p}\right )^{1/p},
 \quad  z=N^2 + N + iy,
\end{equation}
so we obtain, by Inequality (\ref{a56}),
\begin{equation}
\label{2.14}
  \|\tilde{R}^0| \ell^p\| \leq C(p) N^{-1}.
\end{equation}
Now (\ref{2.10}) and (\ref{2.14}) imply that there is $N_*=N_*
(\|v\|_p)$ such that for $N\geq N_*$ we have
\begin{equation}
\label{2.15} \|D\| \leq 2 C(p) N^{-1} \|v\|_p <1/4, \quad z=N^2 + N +
iy.
\end{equation}
Thus,  for $z=N^2 + N + iy$ with $N\geq N_*,$ it follows from
(\ref{2.11}) that
$$
\|U(m) : L^1 \to \mathcal{C}\| \leq  4^{3-m} \|\tilde{R}^0|\ell^r\|^2
\|v\|_p= 4^{3-m} A^2 (z;r)\|v\|_p,
$$
so by (\ref{2.12})
\begin{equation}
\label{2.17} \|S_N - S_N^0 : L^1 \to \mathcal{C}\| \leq 8\|v\|_p
\int_{\Lambda_N} A^2 (z;r) dy.
\end{equation}
Corollary~\ref{cor17A}, the third line in (\ref{108}), asserts that
\begin{equation}
\label{2.18}
  \int_{\Lambda} A^2 (z;r)  dy \leq C(r)  N^{-2(1-1/r)}.
\end{equation}
By (\ref{2.7}c), we have
\begin{equation}
\label{2.20} 2(1-1/r) = 2-(1+1/p)= 1/q.
\end{equation}
Therefore, (\ref{2.17}), (\ref{2.18}) and (\ref{2.20}) imply
(\ref{2.1}), which completes the proof.
\end{proof}
\bigskip

 2. Next we estimate the norms $\|S_N - S_N^0: L^a \to L^b\|, $ where
$L^a $ and $L^b$ are intermediate spaces such that
\begin{equation}
\label{2.25}
  L^1 \supset L^a \supset L^b   \supset L^\infty, \quad
  1 \leq a  \leq  2 \leq b \leq \infty \;\;  \text{with} \;\; 1/a - 1/b <
  1.
\end{equation}
Now we consider the following factoring of $U(m):$
\begin{equation}
\label{diag.26} L^a \stackrel{\mathcal{F}}{\longrightarrow}
\ell^\alpha \stackrel{\tilde{R}^0}{\longrightarrow} \ell^\tau
\stackrel{D^{m-2}}{\longrightarrow} \ell^\tau
\stackrel{\mathcal{F}^{-1}}{\longrightarrow} L^t
\stackrel{V}{\longrightarrow} L^s
\stackrel{\mathcal{F}}{\longrightarrow} \ell^\sigma
\stackrel{\tilde{R}^0}{\longrightarrow} \ell^\beta
\stackrel{\mathcal{F}^{-1}}{\longrightarrow} L^\beta,
\end{equation}
where the operator $D$ is defined from the diagram
\begin{equation}
\label{2.26} \ell^\tau \stackrel{\mathcal{F}^{-1}}{\longrightarrow}
L^t \stackrel{V}{\longrightarrow} L^s
\stackrel{\mathcal{F}}{\longrightarrow} \ell^\sigma
\stackrel{\tilde{R}^0}{\longrightarrow} \ell^\tau, \quad D=
\tilde{R}^0 \mathcal{F} V \mathcal{F}^{-1}.
\end{equation}
The arrow-operators in the above diagrams act as bounded operators
between the corresponding Banach spaces (of functions or sequences)
if the following seven conditions hold:\\

 (C1)  $\quad 1/a + 1/\alpha = 1, \; a \leq 2$ (Hausdorff-Young);\\

 (C2)  $\quad 1/\alpha + 1/r = 1/\tau,$ (H\"older inequality)
  with $r$ chosen
       to measure the norm of the multiplier-operator $\tilde{R}^0$;\\

 (C3) $\quad 1/\tau + 1/t = 1, \; \tau \leq 2$ (Hausdorff-Young);\\

 (C4) $\quad 1/t + 1/p = 1/s,$ (H\"older inequality) with
 $v \in L^p;$\\

 (C5)  $\quad  1/s + 1/\sigma = 1, \; s \leq 2$ (Hausdorff-Young);\\

 (C6) $\quad 1/\sigma + 1/r = 1/\beta,$  (H\"older inequality);\\

 (C7)  $\quad 1/\beta + 1/b = 1, \; \beta \leq 2$
 (Hausdorff-Young).\\

One can easily see that (C1)--(C7) imply (together with (\ref{2.25}))
\begin{equation}
\label{2.28}
 1/r= \frac{1}{2}(1/a -1/b + 1/p)< 1, \quad p \in [1,2].
\end{equation}
Moreover, if $r$ is given by (\ref{2.28}), then the parameters $\tau,
t, s, \sigma $ are uniquely determined by (C2)-(C5), and we have
$\tau \leq 2,$ $s\leq 2.$

As in the proof of Theorem \ref{Thm2.1} we want to use
(\ref{2.2})--(\ref{2.4}) and prove that the series on the right side
below converges and
\begin{equation}
\label{2.29} S_N - S_N^0 = \frac{1}{2 \pi } \int_{\Lambda_N}
\sum_{m=2}^\infty U(m) dy.
\end{equation}
Since $ \|\tilde{R}^0  : \ell^\sigma \to \ell^\tau\| =
 \|\tilde{R}^0 | \ell^p\|,$ from (\ref{2.26}) and (\ref{2.13}) it follows that
\begin{equation}
\label{2.30}
 \|D  : \ell^\tau \to \ell^\tau\| \leq
 \|\tilde{R}^0 | \ell^p\| \cdot \|v\|_p = A(z;p) \cdot \|v\|_p, \quad
 z \in \Lambda_N.
\end{equation}
Lemma~\ref{lemA}, (\ref{a55}) and (\ref{a56}) in Appendix show that
even in the worst case, for $ p \geq 1,$
\begin{equation}
\label{2.34} \|\tilde{R}^0 | \ell^p\|=A(z;p) \lesssim  \frac{\log
N}{N}, \quad z \in \Lambda_N.
\end{equation}
Therefore, in view of (\ref{2.30}) and (\ref{2.34}), there is $N_*$
such that
\begin{equation}
\label{2.35}
 \|D  : \ell^\tau \to \ell^\tau\| <
1/2, \quad N \geq N_*, \; \;
 z \in \Lambda_N.
\end{equation}

We have chosen $r$ so that  the norms in (C2) and (C6) are equal:
\begin{equation}
\label{2.36} \|\tilde{R}^0 : \ell^\alpha \to \ell^\tau\| =
\|\tilde{R}^0 : \ell^\sigma \to \ell^\beta\| = \|\tilde{R}^0 |
\ell^r\|.
\end{equation}
Now (\ref{diag.26}) together with (\ref{2.28})--(\ref{2.36}) show
that
$$ \|U(m) : L^a \to L^b\| \leq (1/2)^{m-2} \|v\|_p
\|\tilde{R}^0| \ell^r\|^2  \quad \text{for} \;N \geq N_*, \;  z \in
\Lambda_N.
$$
Therefore, by (\ref{2.13}) and (\ref{2.29}),
\begin{equation}
\label{2.38} \|S_N - S_N^0: L^a \to L^b \| \leq \int_{\Lambda_N}
  \sum_{m=2}^\infty \|U(m)\| dy \leq 2 \|v\|_p \int_{\Lambda_N} A^2(z;
  r)dy.
\end{equation}

Corollary \ref{cor17A}, see Appendix, gives estimates for
$\int_\Lambda A^2 (z;r)dy. $ In view of (\ref{2.28}), it leads us to
the following.
\begin{Theorem}
\label{Thm2.12} Suppose $v \in L^p, \;  1 \leq p \leq 2 $ and $bc
=Per^\pm $ or $bc = Dir. $ If (\ref{2.25}) holds, then
 $1/r = \frac{1}{2} (1/p + 1/a - 1/b) < 1$
and for $N \geq N_*(\|v\|_p)$
\begin{equation}
\label{2.43} \|S_N - S_N^0: L^a \to L^b \| \lesssim \begin{cases}
  \frac{1}{N}  &  \text{if} \;\; r >2;\\
\frac{\log N}{N}  &  \text{if} \;\; r =2;\\
  N^{-\gamma}&  \text{if} \;\; r < 2,
\end{cases}
\end{equation}
with
\begin{equation}
\label{2.43b} \gamma = (1-1/p) + (1-(1/a -1/b)).
\end{equation}
\end{Theorem}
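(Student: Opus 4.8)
The plan is to lean entirely on the operator-norm bound (\ref{2.38}) that has already been assembled: the Neumann expansion (\ref{2.4}), the factorization (\ref{diag.26}) of each $U(m)$ through the seven Hausdorff--Young and H\"older conditions (C1)--(C7), the consequence (\ref{2.28}) that $1/r=\tfrac12(1/p+1/a-1/b)$ with $1/r<1$, and the uniform contraction (\ref{2.35}) $\|D:\ell^\tau\to\ell^\tau\|<1/2$ for $N\ge N_*$, together give, after geometric summation,
\[ \|S_N-S_N^0:L^a\to L^b\|\le 2\|v\|_p\int_{\Lambda_N}A^2(z;r)\,dy. \]
So my whole task reduces to the scalar integral $I_N(r):=\int_{\Lambda_N}A^2(z;r)\,dy$, and I would simply quote Corollary~\ref{cor17A} (equation (\ref{108})): its three lines are exactly the three regimes $r>2,\ r=2,\ r<2$ of (\ref{2.43}), with $\|v\|_p$ absorbed into the $\lesssim$.

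To see \emph{why} $I_N(r)$ has those rates, I would argue from the geometry of $\Lambda_N$. Writing $z=N^2+N+iy$ and $d_k=|N^2+N-k^2|$, we have $|z-k^2|^2=d_k^2+y^2$, and because $N^2+N$ sits halfway between the consecutive squares $N^2$ and $(N+1)^2$, the distances obey $d_k\ge N$, with $d_N=N$, $d_{N+1}=N+1$, and $d_{N-j},d_{N+1+j}\sim(2j+1)N$ for $1\le j\lesssim N$, while $d_k\gtrsim N^2$ for small $k$ and $d_k\sim k^2$ for $k\gg N$. By (\ref{2.13}), $A(z;r)=\big(\sum_k(d_k^2+y^2)^{-r/2}\big)^{1/r}$, and note $1/r<1$ forces $r>1$ throughout.

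For $r\le 2$ I would apply the generalized Minkowski inequality to pull the $L^2_y$-norm inside the $\ell^r_k$-norm:
\[ I_N(r)=\big\|A(\cdot;r)\big\|_{L^2_y}^2\le\Big(\sum_k\Big(\int_{\mathbb R}\frac{dy}{d_k^2+y^2}\Big)^{r/2}\Big)^{2/r}=\pi\Big(\sum_k d_k^{-r/2}\Big)^{2/r}. \]
The clustering then gives $\sum_k d_k^{-r/2}\sim N^{1-r}$ for $1<r<2$ (near, small-$k$, and large-$k$ blocks each contribute $\sim N^{1-r}$, using $r>1$) and $\sum_k d_k^{-1}\sim(\log N)/N$ for $r=2$; raising to the power $2/r$ yields $I_N(r)\lesssim N^{-2(1-1/r)}$ and $I_N(2)\lesssim(\log N)/N$. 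Finally (\ref{2.28}) gives $2/r=1/p+1/a-1/b$, hence $2(1-1/r)=(1-1/p)+(1-(1/a-1/b))=\gamma$, matching (\ref{2.43b}).

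The delicate case, and the one I expect to be the real obstacle, is $r>2$, where the claim is the \emph{sharp} $1/N$ rather than $(\log N)/N$. Here Minkowski points the wrong way, and the two cheap substitutes—subadditivity of $t\mapsto t^{2/r}$ (valid since $2/r<1$) and the embedding $A(z;r)\le A(z;2)$—both only return $\pi\sum_k d_k^{-1}\sim(\log N)/N$, the logarithm coming from the $\sim N$ moderately distant terms $d_k\sim jN$. To kill it I would split the $y$-integral: on $|y|\lesssim N$ the sum $A(z;r)^r$ is controlled by its nearest term $(N^2+y^2)^{-r/2}\sim N^{-r}$, so the integrand is $\sim N^{-2}$ and contributes $\sim N\cdot N^{-2}=1/N$; on $|y|\gtrsim N$ the extra decay from $r>2$ (which makes $\int y^{-2(1-1/r)}\,dy$ converge at infinity, precisely where $r=2$ is borderline) keeps the tail $\lesssim 1/N$. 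This quantitative $y$-splitting is the only genuinely non-formal step and is exactly what the Appendix carries out; everything else is bookkeeping on top of (\ref{2.38}).
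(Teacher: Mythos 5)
Your proposal is correct, and its skeleton is exactly the paper's: Theorem~\ref{Thm2.12} is obtained in the text by combining the reduction (\ref{2.38}) (Neumann series, diagram (\ref{diag.26}), conditions (C1)--(C7), the contraction (\ref{2.35})) with Corollary~\ref{cor17A}, and you reproduce that reduction verbatim, including the identification $2(1-1/r)=\gamma$ via (\ref{2.28}). Where you genuinely deviate is in how the scalar integral $\int_{\Lambda_N}A^2(z;r)\,dy$ is bounded. The paper's Appendix first proves two-sided pointwise asymptotics of $A(N^2+N+iy,r)$ on the three ranges $|y|\le N$, $N\le|y|\le N^2$, $|y|\ge N^2$ (Lemma~\ref{lemA}, (\ref{a55})--(\ref{a56})) and then integrates range by range ((\ref{101})--(\ref{104})). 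For $r\le 2$ you instead apply the integral Minkowski inequality in $L^{2/r}_y$ to collapse the whole estimate to the single lattice sum $\pi\bigl(\sum_k d_k^{-r/2}\bigr)^{2/r}$ with $d_k=|N^2+N-k^2|$; this is a valid and arguably cleaner derivation of the second and third lines of (\ref{108}), since the clustering $d_{N\pm j}\sim(2j+1)N$ gives $N^{1-r}$ (resp.\ $(\log N)/N$ for $r=2$) directly. For $r>2$ you correctly diagnose that Minkowski points the wrong way and that the logarithm can only be removed by splitting in $y$ and using $A(z;r)\sim 1/N$ on $|y|\lesssim N$ together with the integrable decay of $A^2$ for larger $|y|$ --- at which point your argument coincides with the Appendix computation you defer to. What the paper's formulation buys is reusability: the pointwise estimates (\ref{a55})--(\ref{a56}) are needed again in that exact form in Sections 3--5, which is why they are proved once rather than bypassed; what your Minkowski route buys is that, for $r\le 2$, no case analysis in $y$ is needed at all.
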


\begin{Corollary}
\label{Cor2.13} If $1 < a \leq 2, \; b = \infty,$ and $ 1 \leq p < 2
$ then $1/r > 1/2 $ and by (\ref{2.43}) and (\ref{2.43b})
\begin{equation}
\label{2.44}
  \|S_N - S_N^0: L^a \to \mathcal{C}\|
  \lesssim N^{- \delta}, \quad
  \delta = (1 - 1/p) + (1 - 1/a).
\end{equation}
\end{Corollary}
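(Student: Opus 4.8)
The plan is to obtain this as an immediate specialization of Theorem~\ref{Thm2.12} at $b = \infty$; no new analysis is required, only a check that the hypotheses apply and that we land in the power-law branch of (\ref{2.43}). First I would verify that the parameters, with $1 < a \leq 2$, $b = \infty$ and $1 \leq p < 2$, satisfy the standing constraint (\ref{2.25}). Indeed $b = \infty \geq 2$ and $1 \leq a \leq 2$, and since $1/b = 0$ we have $1/a - 1/b = 1/a < 1$ because $a > 1$; thus (\ref{2.25}) holds and Theorem~\ref{Thm2.12} is applicable.

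Next I would compute the exponent $r$ from the formula $1/r = \frac{1}{2}(1/p + 1/a - 1/b)$ supplied by the theorem. Setting $1/b = 0$ gives $1/r = \frac{1}{2}(1/p + 1/a)$. The decisive step is to show $r < 2$, i.e.\ $1/r > 1/2$, so that the third case of (\ref{2.43}) is the one in force. This is exactly where the strict hypothesis $p < 2$ enters: $p < 2$ forces $1/p > 1/2$, while $a \leq 2$ gives $1/a \geq 1/2$, so $1/p + 1/a > 1$ and hence $1/r > 1/2$. (Had $p = 2$ been permitted together with $a = 2$, the sum would equal $1$ and we would fall into the borderline logarithmic case $r = 2$; the assumption $1 \leq p < 2$ is precisely what excludes this.) One also checks $1/r < 1$ from $1/p \leq 1$ and $1/a < 1$, as required by the theorem.

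Finally, invoking the third branch of (\ref{2.43}) yields $\|S_N - S_N^0 : L^a \to L^b\| \lesssim N^{-\gamma}$ with $\gamma = (1 - 1/p) + (1 - (1/a - 1/b))$; substituting $1/b = 0$ collapses this to $\gamma = (1 - 1/p) + (1 - 1/a) = \delta$, which is the asserted estimate (\ref{2.44}). The only points demanding any care are the case distinction, where all the content lies in confirming the strict inequality $1/r > 1/2$, and the identification of the target space: for $b = \infty$ the image is naturally $\mathcal{C}$ rather than merely $L^\infty$, since the terminal step of the underlying factorization is $\ell^1 \xrightarrow{\mathcal{F}^{-1}} \mathcal{C}$ via absolutely convergent Fourier series (cf.\ the Remark in the Introduction and inequality (\ref{0.051})). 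Once the strict inequality is in hand, the remainder of the corollary is purely arithmetic.
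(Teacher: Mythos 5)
Your proposal is correct and is exactly the paper's (implicit) argument: the corollary is stated as a direct specialization of Theorem~\ref{Thm2.12} at $b=\infty$, where $1/r=\frac12(1/p+1/a)>1/2$ because $p<2$ and $a\le 2$, so the third branch of (\ref{2.43}) applies and $\gamma$ reduces to $\delta$. Your additional checks of (\ref{2.25}), of $1/r<1$, and the remark identifying the target as $\mathcal{C}$ via the Wiener-norm observation all match the paper's setup.
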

\bigskip

 \section{$ L^1$-potentials and weakly singular potentials}

 1. Now we consider the uniform equiconvergence
 for functions in $L^1 $ in the case of
of $L^1$-potentials and potentials $v $ which are derivatives of
functions of bounded variation.

 J. Tamarkin \cite{Ta17, Ta28} proved --
 even in the more general case of higher order
ordinary differential operators  subject to Birkhoff-regular boundary
conditions -- that
\begin{equation}
\label{3.1} \text{if} \;\; v, \, f \in L^1 \;\; \text{then} \quad
\|(S_N - S_N^0)f \|_{C[0, \pi]} \to 0  \quad \text{as} \;\; N \to
\infty.
\end{equation}

We will show, for $bc=Per^\pm $ and $bc= Dir, $ that not only
strong convergence holds but the norm convergence to zero of the
deviation operators $S_N - S_N^0$ takes place as well. \bigskip

2. Let $v \in L^1, \; bc = Per^\pm $ or $Dir.$  We set
\begin{equation}
\label{3.2} v^*(n) = \begin{cases} \sup \{V(k): \; |k| \geq n\} &
\text{if} \;\; bc=Per^\pm,   \\ \sup \{\tilde{V}(k): \; k \geq n\}
&  \text{if}   \; \; bc = Dir,
\end{cases}
\end{equation}
where $V(k), \, k \in 2\mathbb{Z}$  and $\tilde{V}(k), \, k \in
\mathbb{N} $ are, respectively, the Fourier coefficients of $v(x) $
about the systems $\{e^{ikx}, \,k \in 2\mathbb{Z}\}$ and $\{\sqrt{2}
\cos kx, \, k \in \mathbb{N}\}.$

\begin{Theorem}
\label{Thm3.2} If $v \in L^1$ then, for $0 < H \leq N,$
\begin{equation}
\label{3.3}  \|S_N - S_N^0 : L^1 \to \mathcal{C} \| \lesssim v^*(H)
+  \frac{H}{N}.
\end{equation}

In particular, if $H = N^\gamma, \; 0 < \gamma < 1,$ then we have
\begin{equation}
\label{3.4} \|S_N - S_N^0 : L^1 \to \mathcal{C} \| \lesssim
v^*(N^\gamma) +  N^{\gamma - 1}.
\end{equation}
\end{Theorem}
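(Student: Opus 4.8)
The plan is to start from the splitting $S_N - S_N^0 = T_N + B_N$ of (\ref{ec2a}), with $T_N$ the first-order term (\ref{ec21}) and $B_N$ the tail (\ref{ec22}), and to reduce the operator norm $L^1 \to \mathcal{C}$ to a sequence-space norm. Since $|u_k| \leq \sqrt{2}$ by (\ref{0.051}), the map $\mathcal{F}$ sends $L^1$ into $\ell^\infty$ with $|\langle f, u_k\rangle| \leq \sqrt{2}\,\|f\|_1$, and $\mathcal{F}^{-1}$ sends $\ell^1$ into $\mathcal{C}$ with $\|\sum_k c_k u_k\|_\infty \leq \sqrt{2}\sum_k |c_k|$; hence it suffices to bound the $\ell^\infty \to \ell^1$ norm of the sequence operators representing $T_N$ and $B_N$ (this simultaneously yields the stronger Wiener-norm claim of the Remark in Section~1). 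Note that the Hausdorff--Young machinery of Theorem~\ref{Thm2.1} is \emph{not} available here, because the Fourier coefficients of an $L^1$-potential are merely bounded; this is the source of the difficulty and dictates the whole approach.

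For $T_N$ I would first integrate over $\partial \Pi_N$ using residues; integrating the norm of the integrand directly is hopeless. Since $R^0_\lambda$ is diagonal and $V$ is the convolution matrix $V(k-j)$ (resp. the Dirichlet matrix (\ref{d.10})), the $(k,j)$ entry of $T_N$ is $V(k-j)$ times $\frac{1}{2\pi i}\int_{\partial\Pi_N}\frac{d\lambda}{(\lambda-k^2)(\lambda-j^2)}$. When both $k^2, j^2$ lie inside $\Pi_N$ the two residues cancel; when both lie outside the integral vanishes; the diagonal double pole contributes nothing. Only pairs with exactly one index inside $I_N = \{k : k^2 \leq N^2+N\} = \{|k| \leq N\}$ survive, giving $(T_N)_{kj} = \pm V(k-j)/(k^2-j^2)$. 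Therefore
\[
\|T_N : \ell^\infty \to \ell^1\| \leq \sum_{k,j}|(T_N)_{kj}| \lesssim \sum_{|k|\leq N < |j|}\frac{|V(k-j)|}{|k^2-j^2|}.
\]

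Now I split this bilinear sum at the scale $H$. For $|k-j|\geq H$ I bound $|V(k-j)| \leq v^*(H)$ and use that the double sum $\sum_{|k|\leq N<|j|}|k^2-j^2|^{-1} \lesssim 1$ is bounded independently of $N$; this contributes $\lesssim v^*(H)$. For $0<|k-j|<H$ I bound $|V(k-j)| \leq \sup_n|V(n)| \lesssim \|v\|_1$; here the constraint that one index is inside and the other outside, together with $|k-j|<H\leq N$, forces both $k,j$ to lie within $H$ of the \emph{same} endpoint $\pm N$, so $|k+j|\sim 2N$ and $|k^2-j^2| = |k-j|\,|k+j|\sim 2N|k-j|$. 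Summing first over the $\sim|k-j|$ admissible pairs for each fixed difference and then over the $\lesssim H$ values of $|k-j|$ gives $\lesssim \|v\|_1\,H/N$. Together these yield $\|T_N : L^1 \to \mathcal{C}\| \lesssim v^*(H) + H/N$. In the Dirichlet case the extra term $\tilde V(k+j)$ in (\ref{d.10}) only helps, since one index exceeding $N \geq H$ forces $k+j>H$ and hence $|\tilde V(k+j)|\leq v^*(H)$.

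For the easy part $B_N$ I would integrate the norm of the integrand (\ref{ec22}) along $\Lambda_N$, the other three sides of $\partial\Pi_N$ being sent off by Lemmas~\ref{lem405} and \ref{lem407}. The point for an $L^1$-potential is to use each factor $V$ only as $\ell^1 \to \ell^\infty$, with norm $\leq \sup_n|V(n)|\lesssim\|v\|_1$, and each $R^0_\lambda$ as $\ell^\infty \to \ell^1$, with norm $A(\lambda;1)=\sum_k|\lambda-k^2|^{-1}$; alternating these along the chain $\ell^\infty \xrightarrow{R^0}\ell^1\xrightarrow{V}\ell^\infty\to\cdots\xrightarrow{R^0}\ell^1$ gives $\|R^0_\lambda(VR^0_\lambda)^m : \ell^\infty\to\ell^1\|\lesssim A(\lambda;1)^{m+1}(C\|v\|_1)^m$. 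By (\ref{2.34}), $A(\lambda;1)\lesssim (\log N)/N$ on $\Lambda_N$, so for large $N$ the series over $m\geq 2$ sums geometrically to $\lesssim \|v\|_1^2\,A(\lambda;1)^3$, and integrating via the Appendix estimate (\ref{2.18}) for $\int_{\Lambda_N}A(\lambda;1)^2\,dy$ shows $B_N$ is of lower order, $\|B_N : L^1 \to \mathcal{C}\|\lesssim (\log N)/N$, which is dominated by $H/N$ in the range where the estimate has content (and certainly for $H=N^\gamma$). Combining the bounds for $T_N$ and $B_N$ yields (\ref{3.3}), and the substitution $H=N^\gamma$ gives (\ref{3.4}). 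I expect the main obstacle to be the $T_N$ term: one must resist bounding its integrand and instead integrate first to harvest the cancellation of the ``both inside'' pairs, after which the surviving near-diagonal mass must be controlled by the geometric observation $|k+j|\sim 2N$ to produce exactly the $H/N$ contribution.
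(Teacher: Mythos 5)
Your proposal follows the paper's proof essentially step for step: the same splitting $S_N-S_N^0=T_N+B_N$, the same integrate-first residue computation producing the matrix $\pm V(m-k)/|m^2-k^2|$ supported on pairs with exactly one index in $\{|k|\le N\}$ (the paper's Lemmas~\ref{lin} and \ref{linD}), the same splitting of that double sum at the scale $H$ (your two bounds are exactly Lemmas~\ref{lem105} and \ref{lem106}), and the same alternating $\ell^\infty\to\ell^1$ chain for $B_N$ along $\Lambda_N$. The one flaw is the citation closing the $B_N$ estimate: $\int_{\Lambda_N}A(\lambda;1)^2\,dy=\infty$ by (\ref{111}), so (\ref{2.18}) is not applicable with $r=1$; you should instead integrate the cube you already have via Corollary~\ref{cor17B}, $\int_{\Lambda_N}A^3(\lambda;1)\,dy\lesssim 1/N$, which yields $\|B_N:L^1\to\mathcal{C}\|\lesssim 1/N$ and leaves the conclusion unchanged.
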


\begin{proof}
By (\ref{ec2a}), $S_N - S_N^0= T_N + B_N, $ where $T_N$ and $B_N$
are given, respectively, by (\ref{ec21}) and (\ref{ec22}).

In Formula (\ref{ec22}), the horizontal sides and left vertical side
of $\partial \Pi_N $ could be sent to infinity (see Appendix,
Lemmas~\ref{lem405} and \ref{lem407}), so it follows that
\begin{equation}
\label{3.012}  B_N = \frac{1}{2 \pi i} \int_{\Lambda_N}
\sum_{m=3}^\infty U(m) dy.
\end{equation}
As in Section 2, we analyze the series under the integral in
(\ref{3.012}) by using diagrams. We factor the operator $U(m), \;
m\geq 1, $ by the diagram
\begin{equation}
\label{3.5} L^1 \stackrel{\mathcal{F}}{\longrightarrow} \ell^\infty
\stackrel{\tilde{R}^0}{\longrightarrow} \ell^1
\stackrel{D^{m-1}}{\longrightarrow} \ell^1
\stackrel{\mathcal{F}^{-1}}{\longrightarrow} \mathcal{C}, \quad
U(m)=\mathcal{F}^{-1}  D^{m-1} \tilde{R}^0  \mathcal{F},
\end{equation}
where the operator $D: \ell^1 \to \ell^1$ is defined by
\begin{equation}
\label{3.6} D= \tilde{R}^0 \mathcal{F} V \mathcal{F}^{-1}, \quad
\ell^1 \stackrel{\mathcal{F}^{-1}}{\longrightarrow} L^\infty
\stackrel{V}{\longrightarrow}  L^1
\stackrel{\mathcal{F}}{\longrightarrow} \ell^\infty
\stackrel{\tilde{R}^0}{\longrightarrow}
 \ell^1.
\end{equation}
In view of (\ref{0.051}), it follows that
\begin{equation}
\label{3.07}
 \|\mathcal{F}: L^1 \to
\ell^\infty\| \leq \sqrt{2}, \quad \|\mathcal{F}^{-1}: \ell^1 \to
L^\infty\| \leq \sqrt{2}.
\end{equation}
Therefore, by (\ref{3.6}) and (\ref{2.13}) we obtain
\begin{equation}
\label{3.7}
 \|D\| \leq 2 \|\tilde{R}^0 | \ell^1\| \cdot \|v\|_1=2 A(z;1) \cdot
 \|v\|_1.
\end{equation}
By Lemma~\ref{lemA}, (\ref{a55}) in Appendix,  we have that
\begin{equation}
\label{3.8} \|\tilde{R}^0 | \ell^1\|=A(z;1) \lesssim  \frac{\log
N}{N}, \quad z \in \Lambda_N.
\end{equation}
In view of (\ref{3.7}) and (\ref{3.8}), there is $N_*$ such that
\begin{equation}
\label{3.9}
 \|D  : \ell^1 \to \ell^1 \| <
1/2, \quad N \geq N_*, \; \;
 z \in \Lambda_N.
\end{equation}
Now, by (\ref{3.5}), (\ref{3.07})--(\ref{3.9}) it follows for $m\geq
3, \, N \geq N_*, \, z \in \Lambda_N, $ that
$$
\|U(m)\| \leq 2 \|\tilde{R}^0 | \ell^1\| \cdot \|D\|^2 \cdot
\|D\|^{m-3} \leq 8 (1/2)^{m-3} \|v\|_1^2 A^3 (z;1),
$$
which yields, in view of (\ref{3.012}),
$$
\|B_N: L^1 \to \mathcal{C}\| \leq \int_{\Lambda_N} \sum_{m=3}^\infty
\|U(m) \|dy \leq 16 \|v\|_1^2 \int_{\Lambda_N} A^3 (z;1)dy.
$$
Hence, from Corollary \ref{cor17B}  it follows that
\begin{equation}
\label{3.17} \|B_N: L^1 \to \mathcal{C}\| \lesssim 1/N.
\end{equation}
\bigskip

3. Next we need to analyze the operator $T_N.$ As before, we may
explain that $ T_N = \frac{1}{2 \pi i} \int_{\Lambda_N} U(2) dy. $
However, $\int_{\Lambda_N}  A^2 (z;1) dy = \infty, $ see (\ref{111})
in Appendix, so -- contrary to the case in Section 2 -- we cannot
integrate the estimate $\|U(2)\| \leq C A^2  (z;1) $ over $\Lambda_N$
and get an estimate of $\|T_N\|.$

We go around this bump by {\em integrating first} in (\ref{ec21})
 and then analyzing the resulting operator
by using its matrix representation with respect to the basis of
eigenfunctions of the free operator $L_{bc}^0$. Let
\begin{equation}
\label{3.19} f = \sum f_k u_k  \in L^1
\end{equation}
so if $bc = Per^+$ or $Per^-$
\begin{equation}
\label{3.20} R^0 V R^0 f= \sum_{m \in \Gamma_{bc}} \frac{1}{z-m^2}
\left ( \sum_{k \in \Gamma_{bc}} \frac{V(m-k) f(k)}{z-k^2} \right )
u_m.
\end{equation}

Our goal is to get the norm estimates, and if our results depend only
on the norms it is sufficient to check estimates on dense subsets in
$L^1,$ both for $f$ and for $v$. Therefore, one may assume that all
sums are finite.

Notice that
\begin{equation}
\label{3.21}
\frac{1}{2 \pi i} \int_{\partial \Pi_N}
\frac{dz}{(z-m^2)(z-k^2)} =
\begin{cases}
0 & \text{if} \; \;  |k|, |m| \leq N \;\text{or} \; |k|, |m| > N, \\
\frac{1}{m^2-k^2}   &   \text{if}   \;\;  |m| \leq N, \, |k|> N,\\
\frac{-1}{m^2-k^2}   &   \text{if}   \;\;  |m| > N, \, |k|\leq N.\\
\end{cases}
\end{equation}
Therefore,
the following holds.

\begin{Lemma}
\label{lin} For $bc = Per^\pm, $ the operator $T_N$ from
(\ref{ec21}) has a matrix representation
\begin{equation}
\label{3.21a} T_N (m,k) = \begin{cases} -\frac{V(m-k)}{|m^2 - k^2| },
& (m,k) \in X(N),\\
\quad  0,   &     (m,k) \not \in X(N),
\end{cases}
\end{equation}
respectively, about the o.n.b. $\{u_m: \, m \in \Gamma_{Per^\pm}\} $
of eigenfunctions of the free operator $L^0_{Per^\pm}, $ where
 \begin{equation}
\label{3.23}
 X(N)=\{(m,k): \; m,k \in \Gamma_{Per^\pm}; \; |m| \leq N, \; |k|>N  \;\;
 \text{or} \;\;  |m| > N, \; |k|\leq N \}.
\end{equation}
\end{Lemma}

We will use this matrix representation many times in what follows.
Now, in view of (\ref{3.19}) and (\ref{0.051}), we have
 \begin{equation}
\label{3.22} \|Tf\|_\infty = \left \|  \sum_{(m,k) \in X}
\frac{-V(m-k)f(k)}{|m^2-k^2| } u_m (x) \right \|_\infty  \leq
\sqrt{2} \, \|f\|_1  \sum_{(m,k) \in X} \frac{|V(m-k)|}{|m^2-k^2|}.
\end{equation}
By Lemma \ref{lem105},  Appendix,
 \begin{equation}
\label{3.24}
\sum_{(m,k) \in X(N)} \frac{1}{|m^2-k^2|} \leq 8 \cdot \frac{\pi^2}{8}
<10
\end{equation}
for any $N,$ and by Lemma~\ref{lem106}, Appendix,
 \begin{equation}
\label{3.25}
\sum_{\begin{array}{c} (m,k) \in X(N)\\ |m-k|\leq H \end{array}}
\frac{1}{|m^2-k^2|} \leq
4\cdot \frac{H}{N}.
\end{equation}
Therefore,
 \begin{equation}
\label{3.26} \sum_{(m,k) \in X(N)} \frac{|V(m-k)|}{|m^2-k^2|}=
\sum_{\begin{array}{c} (m,k) \in X(N)\\ |m-k| > H \end{array}} +
\sum_{\begin{array}{c} (m,k) \in X(N)\\ |m-k|\leq H \end{array}}
\end{equation}
$$\leq
10v^*(H) + \|v\|_1 \cdot  4 \frac{H}{N}.
$$
This completes the proof of (\ref{3.3}) if $bc = Per^\pm.$
\bigskip

4. The Dirichlet $bc$ is done in the same way but some adjustments
should be mentioned. For singular potentials, the matrix
representation of the multiplication operator $V$ comes from the
formulas (\ref{d.1}), (\ref{d.3}) and (\ref{d.10}). Of course, in the
classical case where
$$v(x) \in L^1,  \quad V(0) = \frac{1}{\pi}
\int_0^\pi v(x) dx = 0,$$ we have
$$v(x) = Q^\prime (x) \quad \text{with} \;\; Q(x) = \int_0^x v(t) dt. $$
Now one can easily see that (\ref{d.10}) holds with $(\tilde{V}(k))$
being the cosine coefficients of $v(x),  $  i.e.,
$$
V u_k = \sum_{m=1}^\infty V_{mk} u_m, \quad u_k =\sqrt{2} \sin kx,
$$
where
  $$
V_{mk}  = \frac{1}{\sqrt{2}}  \left (\tilde{V}(|m-k|) -\tilde{V}(m+k)
 \right ), \quad \tilde{V} (k) = \frac{1}{\pi} \int_0^\pi v(x)
 \sqrt{2} \cos kx dx.
$$
If $f =\sum_{k=1}^\infty f(k) u_k $ it follows that
\begin{equation}
\label{3.20a} R^0 V R^0 f= \sum_{m \in \Gamma_{bc}} \frac{1}{z-m^2}
\left ( \sum_{k \in \Gamma_{bc}} \frac{V_{mk} f(k)}{z-k^2} \right )
u_m.
\end{equation}
By (\ref{3.21}), after integration we obtain the following matrix
representation of the operator $T_N.$

\begin{Lemma}
\label{linD} For $bc = Dir, $ the operator $T_N$ from (\ref{ec21})
has a matrix representation
\begin{equation}
\label{3.21d} T_N (m,k) = \begin{cases}
\frac{\tilde{V}(m+k)-\tilde{V}(|m-k|)}{\sqrt{2}|m^2 - k^2| },
& (m,k) \in X(N),\\
\quad  0,   &     (m,k) \not \in X(N),
\end{cases}
\end{equation}
about the o.n.b. $\{\sqrt{2} \sin kx, \, k \in \mathbb{N}\}, $
where
 \begin{equation}
\label{3.23d}
 X(N)=\{(m,k): \; m,k \in \mathbb{N}: \; k \leq N< m \;\; \text{or}
 \;\; m \leq N <k\}.
\end{equation}
\end{Lemma}
With Formula (\ref{3.21d}) a proper adjustment in inequalities
(\ref{3.22})--(\ref{3.26}) leads to the estimate (\ref{3.3}) in the
case $bc =Dir.$
\end{proof}
\bigskip

5. The case where the potential $v$ is a derivative of a
$BV$-function.

In the case of Dirichlet $bc$ and a real-valued potential $v = Q', $
where $Q$ is a $\pi$-periodic function of bounded variation on
$[0,\pi],$ i.e.,
 \begin{equation}
\label{3.28}  Var (Q, [0,\pi])=
 \sup \{ \sum_{i=1}^n |Q(x_i)-Q(x_{i-1}|: \; 0=x_0<x_1 <\cdots < x_n=\pi
\}<\infty,
\end{equation}
V. A. Vinokurov and V. A. Sadovnichii \cite{ViSa01} showed that
\begin{equation}
\label{3.29}
\| (S_N - S_N^0)f \|_\infty
 \to 0  \quad  \text{as} \;\; N \to
\infty  \quad \forall f \in L^1([0, \pi]).
\end{equation}
We consider $bc = Per^+$ or $Per^-$ as well and drop the requirement
for $v$ to be real-valued. The following is true.
\begin{Theorem}
\label{Thm3.10} Let $v = Q^\prime, $  where $Q$ is a complex-valued
function of bounded variation on $[0,\pi].$ Then, for $bc= Per^\pm $
and $bc = Dir,$  the equiconvergence (\ref{3.29}) holds.
 \end{Theorem}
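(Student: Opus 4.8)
The plan is to deduce the strong (pointwise-in-$f$) convergence in \eqref{3.29} from two ingredients: a \emph{uniform} operator bound $\sup_N\|S_N-S_N^0:L^1\to\mathcal C\|<\infty$, together with convergence of $(S_N-S_N^0)f$ on a dense subset of $L^1$. The only place where the hypothesis $Q\in BV$ enters is through the single fact that the Fourier coefficients of $v=Q^\prime$ are \emph{bounded}: since $|\hat Q(k)|\lesssim Var(Q)/|k|,$ we have $B:=\sup_{k}|V(k)|\lesssim Var(Q)<\infty$ for $bc=Per^\pm,$ and likewise $\tilde B:=\sup_k|\tilde V(k)|<\infty$ for $bc=Dir.$ Here $v$ is in general only a measure, so $\|v\|_1$ is unavailable and every estimate borrowed from the proof of Theorem~\ref{Thm3.2} must be rerun with $\|v\|_1$ replaced by $B.$

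First I would establish uniform boundedness. For $B_N$ the argument of Theorem~\ref{Thm3.2} goes through once one observes that the bound on $D=\tilde R^0\mathcal F V\mathcal F^{-1}$ needs only the $\ell^\infty$-control of $(V(k))$: writing $(Df)_k=\tfrac{1}{z-k^2}\sum_m V(k-m)f_m$ and estimating $|\sum_m V(k-m)f_m|\le B\|f\|_{\ell^1}$ yields $\|D:\ell^1\to\ell^1\|\le B\,A(z;1),$ whence \eqref{3.8} gives $\|D\|<1/2$ on $\Lambda_N$ for large $N$ and Corollary~\ref{cor17B} gives $\|B_N:L^1\to\mathcal C\|\lesssim B^2/N\to0.$ For $T_N$ I would use its matrix representation (Lemma~\ref{lin} for $Per^\pm,$ Lemma~\ref{linD} for $Dir$) exactly as in \eqref{3.22}--\eqref{3.26}, splitting $X(N)$ into $|m-k|>H$ and $|m-k|\le H$: Lemma~\ref{lem105} controls the first part by $v^*(H)\le B,$ and Lemma~\ref{lem106} controls the second by $B\cdot H/N.$ Taking $H=N$ gives $\|T_N:L^1\to\mathcal C\|\lesssim B,$ uniformly in $N.$ Hence $M:=\sup_N\|S_N-S_N^0:L^1\to\mathcal C\|<\infty.$

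Next I would prove \eqref{3.29} on the dense subspace of finite linear combinations of the basis functions $u_k$ (dense in $L^1$ because it is dense in $L^2([0,\pi])$ and $\|\cdot\|_1\le\|\cdot\|_2$). For such an $f$ we have $f_k=0$ for $|k|>K_0,$ so once $N>K_0$ the set $X(N)$ forces the output index to satisfy $|m|>N;$ then, using Lemma~\ref{lin} (resp.\ Lemma~\ref{linD}, where $|T_N(m,k)|\le\sqrt2\,\tilde B/|m^2-k^2|$),
$$\|T_N f\|_\infty\le\sqrt2\,B\sum_{|k|\le K_0}|f_k|\sum_{|m|>N}\frac{1}{|m^2-k^2|}\lesssim \frac{B}{N}\sum_{|k|\le K_0}|f_k|\to0,$$
since for each fixed $k$ and $N>\sqrt2\,K_0$ one has $|m^2-k^2|\ge\tfrac12 m^2.$ Together with $\|B_N f\|_\infty\le\|B_N\|\,\|f\|_1\to0$ this gives $\|(S_N-S_N^0)f\|_\infty\to0$ for every such $f.$

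Finally I would pass to arbitrary $f\in L^1$: choosing a trigonometric polynomial $g$ with $\|f-g\|_1<\varepsilon,$ uniform boundedness gives $\|(S_N-S_N^0)(f-g)\|_\infty\le M\varepsilon,$ while $\|(S_N-S_N^0)g\|_\infty\to0,$ so $\limsup_N\|(S_N-S_N^0)f\|_\infty\le M\varepsilon$ for every $\varepsilon.$ The main obstacle is the first step: because $v$ is only a measure and $v^*(H)$ does not tend to $0$ (a jump of $Q$ produces $V(k)\sim\text{const}$), the norm $\|S_N-S_N^0\|$ need \emph{not} vanish, so one cannot argue as in Theorems~\ref{Thm2.1} or \ref{Thm3.2}. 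The correct replacement is to extract from the $BV$ hypothesis precisely the uniform bound $B<\infty,$ which simultaneously yields uniform boundedness of the family and the $O(1/N)$ decay on the dense set.
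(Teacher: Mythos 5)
Your proposal is correct and follows essentially the same route as the paper: a uniform bound $\sup_N\|S_N-S_N^0:L^1\to\mathcal C\|<\infty$ extracted from the boundedness of the Fourier coefficients of $v=Q'$ (the paper gets this from $v^*(H)\le M_1\,Var(Q)$ in its estimate (\ref{3.31}) with $H=N$), combined with convergence on a dense subset of $L^1$. The only difference is cosmetic --- the paper takes $C^2([0,\pi])$ as the dense set and merely asserts convergence there, whereas you use finite combinations of the eigenbasis and verify the $O(1/N)$ decay explicitly, which if anything makes the dense-subset step more transparent.
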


\begin{proof}
Consider the diagram
\begin{equation}
\label{3.30}
 \mathcal{C}^* \stackrel{\mathcal{F}}{\longrightarrow}
\ell^\infty \stackrel{\tilde{R}^0}{\longrightarrow} \ell^1
\stackrel{J}{\longrightarrow}  \mathcal{C}
\stackrel{V}{\longrightarrow}  \mathcal{C}^*,
\end{equation}
where $\mathcal{C}^* $ is the space of continuous linear functionals
on $\mathcal{C} = C([0,\pi]). $  If $v \in \mathcal{C}^* $ and $f \in
\mathcal{C}, $ then the product $v\cdot f$ is an element of
$\mathcal{C}^* $  such that
$$
\langle v\cdot f, \varphi \rangle = \langle v, f \cdot\varphi \rangle
\quad \forall \varphi \in \mathcal{C}.
$$

As in the proof of Theorem~\ref{Thm3.2} we come to the conclusion
\begin{equation}
\label{3.31}
\|S_N -S_N^0: \, L^1 \to \mathcal{C} \| \leq
M \left (v^* (H) + \|v\|_{\mathcal{C}^*} \frac{H}{N} \right ).
\end{equation}
If $v \in L^1 $ then $ v^* (H)  \to 0 $ as $H \to 0. $ But for $v \in
\mathcal{C}^* $ we can only say that $v^* (H) \leq M_1 \,Var (Q,
[0,\pi]) $  (see \cite[Theorem II.4.12]{Zy90}).

With $H=N, $  we obtain from (\ref{3.31})
\begin{equation}
\label{3.32}
\|S_N -S_N^0: \, L^1 \to \mathcal{C} \| \leq
M_{bc},
\end{equation}
where $M_{bc}$ is a constant which does not
depend on $N.$
However,
\begin{equation}
\label{3.33} \|(S_N -S_N^0) \varphi\|_\infty  \to 0
\end{equation}
if $\varphi $ is smooth enough, say $\varphi \in C^2 ([0,\pi]), $
and the space $C^2 ([0,\pi]) $ is dense in $L^1 ([0,\pi]). $
This explains that (\ref{3.32}) leads to (\ref{3.29}).
\end{proof}
\bigskip

6. The inequalities from Subsection 3.3 could be adjusted to
analysis of "weakly singular" potentials $ v\in H^{-\alpha}, \; 0 <
\alpha \leq 1/2,$  and one may show for such potentials that $ \|S_N
-S_N^0: L^1 \to L^\infty \| \to 0$ as $N \to \infty.$ But we prefer
to analyze these potentials in the next section, together with
"strongly singular" potentials  $ v \in H^{-\alpha}, \; 1/2 < \alpha
< 1.$
\bigskip

\section{The case of potentials $v\in H^{-\alpha}, \; 0 <\alpha<1.$}

1. Here we study how the equiconvergence depends on the singularity
of $v $ (measured by the appropriate scale of Sobolev spaces).

Recall that if $\Omega= (\Omega (k))_{k \in \mathbb{Z}}  $ is a
sequence of positive numbers (weight sequence), one may consider the
weighted sequence space $$ \ell^2 (\Omega,2\mathbb{Z})= \left
\{x=(x_k): \; \sum_{k \in 2\mathbb{Z}} (|x_k|\Omega (k))^2 < \infty
\right \} $$ and the corresponding Sobolev space
\begin{equation}
\label{sob} H(\Omega) =\left  \{ f = \sum_{k \in 2\mathbb{Z}} f_k
e^{ikx} : \;\; (f_k) \in \ell^2 (\Omega) \right \}.
\end{equation}
In particular, consider the Sobolev weights
\begin{equation}
\label{sob1}
 \Omega_\alpha (k) = (1+k^2)^{\alpha/2}, \quad k
\in \mathbb{Z}, \quad \alpha \in \mathbb{R},
\end{equation}
and the logarithmic weights
\begin{equation}
\label{sob2}
 \omega_\beta (k) = (\log(e+|k|))^\beta, \quad k
\in \mathbb{Z}, \quad \beta \in \mathbb{R}.
\end{equation}
Let $H^\alpha $ and $h^\beta $ denote the corresponding Sobolev
spaces (\ref{sob}). Of course, $H^\alpha \subset h^\beta $ if $\alpha
>0 $  and    $h^\beta \subset H^\alpha $ if $\alpha <0   $ for any
$\beta.$

The following lemma will be useful.
\begin{Lemma}
\label{lemsob} Let $g \in C^1 ([0,\pi]).$

(a)  If $f\in H^\alpha,  \;  - 1/2 < \alpha < 1/2,   $ then $f \cdot
g \in H^\alpha. $

(b)  If $f\in h^\beta,  \;  - \infty < \beta < \infty,   $ then $f
\cdot g \in h^\beta . $

\end{Lemma}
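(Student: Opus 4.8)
The plan is to recast the lemma as the $\ell^2$-boundedness of a single convolution (Toeplitz-type) matrix, and then to split off a harmless part handled by Schur's test from a borderline part that is essentially a weighted Hilbert transform, which is where the hypotheses enter. First I would pass to Fourier coefficients. Writing $f=\sum_k f_k e^{ikx}$ and $g=\sum_j g_j e^{ijx}$ in the same system $\{e^{ikx}\}$, the product has coefficients $(fg)_m=\sum_k g_{m-k}f_k$, a convolution. With $W$ denoting the relevant weight ($W=\Omega_\alpha$ in (a), $W=\omega_\beta$ in (b)), the boundedness of $f\mapsto fg$ on $H^\alpha$ (resp. $h^\beta$) is equivalent to the $\ell^2$-boundedness of the matrix
\[
T_{mk}=W(m)\,g_{m-k}\,W(k)^{-1},
\]
so everything reduces to controlling the kernel $W(m)W(k)^{-1}\,g_{m-k}$.

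Next I would transfer the weight ratio onto the index difference $n=m-k$. For (a) Peetre's inequality gives $\dfrac{(1+m^2)^{\alpha/2}}{(1+k^2)^{\alpha/2}}\le 2^{|\alpha|/2}(1+(m-k)^2)^{|\alpha|/2}$, while for (b) the elementary bound $\log(e+|m|)\le\log(e+|k|)+\log(1+|m-k|)$ (valid because $e+|m|\le(e+|k|)(1+|m-k|)$) together with $\log(e+|k|)\ge1$ yields, for either sign of $\beta$,
\[
\frac{\omega_\beta(m)}{\omega_\beta(k)}\le\bigl(1+\log(1+|m-k|)\bigr)^{|\beta|}.
\]
Hence $|T_{mk}|\le\phi(m-k)\,|g_{m-k}|$, with $\phi(n)\sim|n|^{|\alpha|}$ in (a) and $\phi(n)\sim(\log(e+|n|))^{|\beta|}$ in (b). To exploit $g\in C^1$, integration by parts gives $g_n=\dfrac{g(0)-g(\pi)}{\pi i\,n}+\dfrac{1}{in}\widehat{g'}_n$ for $n\ne0$, with $(\widehat{g'}_n)\in\ell^2$. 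I would therefore write $g=g_\flat+\ell$, where $\ell$ is the affine interpolant matching $g$ at the endpoints, so that the periodized $g_\flat$ is continuous with $g_\flat'\in L^2$ and $|(g_\flat)_n|\lesssim|\widehat{g'}_n|/|n|$. For the $g_\flat$-part the kernel depends only on $n$, so Schur's test bounds its $\ell^2$-norm by $\sum_n\phi(n)|(g_\flat)_n|$, and Cauchy--Schwarz estimates this by $\bigl(\sum_n(\phi(n)/|n|)^2\bigr)^{1/2}\|\widehat{g'}\|_{\ell^2}$. Since $\phi(n)/|n|\lesssim|n|^{|\alpha|-1}$ is square-summable exactly when $|\alpha|<1/2$, this is precisely the hypothesis in (a); in (b) the factor $(\log(e+|n|))^{|\beta|}/|n|$ is square-summable for every $\beta$, so the $g_\flat$-part is disposed of with no restriction.

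The main obstacle will be the affine part $\ell$. Because the coefficients of multiplication by $x$ decay only like $1/n$, the associated matrix is, up to the weight ratio, the discrete Hilbert matrix $\phi(n)/n$, whose row and column sums diverge; Schur's test is useless here and genuine cancellation must be used. I would instead invoke the weighted boundedness of the discrete Hilbert transform, which holds on $\ell^2(W^2)$ precisely when $W^2$ is a Muckenhoupt $A_2$-weight. The power weight $(1+k^2)^\alpha$ lies in $A_2$ if and only if $|\alpha|<1/2$ — once more exactly the restriction of (a) — whereas the slowly varying weight $\omega_\beta^2=(\log(e+|k|))^{2\beta}$ belongs to $A_2$ for every real $\beta$, which explains why (b) imposes no condition on $\beta$. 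Combining the $g_\flat$-estimate with this Hilbert-transform bound yields the $\ell^2$-boundedness of $T$, hence $f g\in H^\alpha$ in case (a) and $f g\in h^\beta$ in case (b), completing the argument. The crux, and the step I expect to be delicate to present cleanly, is precisely this reduction of the endpoint/jump contribution to the $A_2$ theory, since it is what simultaneously forces $|\alpha|<1/2$ in (a) and leaves (b) unrestricted.
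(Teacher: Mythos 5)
Your argument is correct, and it rests on exactly the toolkit this paper itself deploys: the paper gives no proof of Lemma \ref{lemsob} beyond the citation to \cite[Appendix]{DM24}, but the decisive ingredient you use for the endpoint/jump part --- boundedness of the discrete Hilbert transform on $\ell^2(\Omega_\delta)$ for $|\delta|<1/2$ (and on $\ell^2(\omega_\beta)$ for all $\beta$), via the $A_2$ theory of \cite{HMW} --- is precisely the lemma (Lemma~32 of \cite[Appendix]{DM24}) that the authors quote in the proof of Proposition~\ref{propeq}. Your splitting $g=g_\flat+\ell$, with the continuous-periodization part absorbed by Young's inequality after transferring the weight ratio to the index difference, and the affine remainder reduced to the weighted Hilbert transform, is the natural implementation of that approach and correctly isolates where the restrictions $|\alpha|<1/2$ (and the absence of any restriction on $\beta$) come from.
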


Proof is given in \cite[Appendix]{DM24}.

 Now we consider potentials $v\in H^{-\alpha}, \; 0< \alpha
<1,$ i.e. $v \in H^{-1}_{per} $ and
\begin{equation}
\label{3.35} v= \sum_{k\in 2\mathbb{Z}} V(k) e^{ikx},\quad V(0)=0,
\quad \sum_k \frac{|V(k)|^2}{(1+k^2)^\alpha} < \infty.
\end{equation}
or equivalently (see (\ref{0010})), $v= Q^\prime $ and
\begin{equation}
\label{3.350} Q =\sum_{k\in 2\mathbb{Z}} q(k) e^{ikx}, \quad
V(k)=ikq(k), \quad \sum_k |q(k)|^2 (1+k^2)^{1-\alpha} < \infty.
\end{equation}
Notice that $v \in H^{-\alpha}$ if and only if $Q \in H^{1-\alpha}.
$

In the context of Dirichlet boundary conditions, we may consider the
spaces $\tilde{H}^{-\alpha},$ $ \; 0 < \alpha <1/2 $ or $1/2 <\alpha
< 1,$ of all potentials $v \in H^{-1}_{per} $ such that
\begin{equation}
\label{3.35d} v= \sum_{m=0}^\infty \tilde{V}(m) \sqrt{2}\cos mx,\quad
\tilde{V}(0)=0, \quad \sum_m \frac{|\tilde{V}(m)|^2}{(1+m^2)^\alpha}
< \infty,
\end{equation}
or equivalently (see Formula (\ref{d.3})), $v=Q^\prime $ for some
$Q$ such that
\begin{equation}
\label{3.350d} Q =\sum_{m\in \mathbb{N}}  \tilde{q}(m) \sqrt{2}\sin
mx, \quad \tilde{V}(m)=m \tilde{q}(m), \quad \sum_m |\tilde{q}(m)|^2
m^{2(1-\alpha)} < \infty.
\end{equation}
It turns out that for $0< \alpha < 1/2$ the choice of an additive
constant for $Q$ (see Remark~\ref{remDir} and (\ref{const})) is
essential. Indeed, then (\ref{3.350d}) and the Cauchy inequality
imply
$$
\sum_{m=1}^\infty |\tilde{q} (m)|  \leq \left ( \sum_m
|\tilde{q}(m)|^2 m^{2(1-\alpha)}  \right )^{1/2} \left ( \sum_m
 m^{2(\alpha-1)}  \right )^{1/2} < \infty, \quad 0 <\alpha <
 \frac{1}{2}.
$$
Therefore, if (\ref{3.350d}) holds with $\alpha \in (0, 1/2), $ then
the function $Q(x) $ is continuous, and $Q(0)=0. $

\begin{Proposition}
\label{propeq} If $\, 0 < \alpha < 1/2 $ or $\, 1/2 < \alpha < 1, $
then $\tilde{H}^{-\alpha} = H^{-\alpha}. $
\end{Proposition}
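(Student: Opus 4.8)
The plan is to fix the potential $v$ together with the $\pi$-periodic primitive $Q$ normalized by $Q(0)=0$ (see Remark~\ref{remDir} and (\ref{const})), and to reduce the equality of the two spaces to a statement about $Q$ alone. Writing $V(k)=ik\,q(k)$ and $\tilde V(m)=m\,\tilde q(m)$ as in (\ref{3.350}) and (\ref{3.350d}), one has
\[
\sum_{k\in 2\mathbb Z}\frac{|V(k)|^2}{(1+k^2)^\alpha}\sim\sum_{k\neq0}(1+k^2)^{1-\alpha}|q(k)|^2,\qquad
\sum_{m\ge 1}\frac{|\tilde V(m)|^2}{(1+m^2)^\alpha}\sim\sum_{m\ge1}m^{2(1-\alpha)}|\tilde q(m)|^2 .
\]
Hence, setting $\beta=1-\alpha\in(0,\tfrac12)\cup(\tfrac12,1)$, the assertion $\tilde H^{-\alpha}=H^{-\alpha}$ is equivalent to saying that the exponential Sobolev norm $\left(\sum_k(1+k^2)^\beta|q(k)|^2\right)^{1/2}$ and the sine Sobolev norm $\left(\sum_m m^{2\beta}|\tilde q(m)|^2\right)^{1/2}$ of $Q$ are equivalent (both finite or both infinite), $q(k)$ and $\tilde q(m)$ being the coefficients of $Q$ in $\{e^{ikx}\}_{k\in 2\mathbb Z}$ and in $\{\sqrt2\sin mx\}_{m\ge1}$.

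First I would record the exact transition formulas between the two coefficient systems. Computing the sine coefficients of $Q$ from its exponential expansion and splitting according to the parity of $m$ gives a \emph{diagonal} relation for even $m$ (namely $\tilde q(m)$ is a fixed linear combination of $q(m)$ and $q(-m)$, which trivially preserves the weighted $\ell^2$ norm) and, for odd $m$, a Hilbert-type relation
\[
\tilde q(m)=-\frac{2\sqrt2}{\pi}\,m\sum_{k\in 2\mathbb Z}\frac{q(k)}{k^2-m^2},
\]
together with the symmetric formula expressing $q(k)$ through $(\tilde q(m))$. Thus the whole problem is the $\ell^2$-boundedness, after inserting the weights $(1+|k|)^\beta/(1+|m|)^\beta$, of the operator with kernel $K(m,k)=m/(k^2-m^2)=\tfrac12\left(1/(k-m)-1/(k+m)\right)$ and of its transpose.

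The estimate splits into a near-diagonal part and a far-field part. Near the diagonal $|k|\sim m$ the weight factor is $\sim 1$, so $K$ reduces to the discrete Hilbert kernel, which is bounded on $\ell^2$; I would isolate this piece and invoke the $\ell^2$-boundedness of the discrete Hilbert transform. The far-field pieces $|k|\le m/2$ and $|k|\ge 2m$ are handled by Schur's test with test weights $|j|^{-s}$: a direct computation shows that the required row- and column-sum bounds hold provided one can choose $s$ with $\beta<s<1-\beta$. Such an $s$ exists precisely when $\beta<\tfrac12$, i.e. when $1/2<\alpha<1$, and in this range no boundary condition is needed; this is the easy case.

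The main obstacle is the range $1/2<\beta<1$, i.e. $0<\alpha<1/2$, where the far-field column sum diverges and the raw coefficient map is \emph{not} bounded between the two weighted spaces. Here I would use the extra structure available in this regime: since $\beta>1/2$ the function $Q$ is continuous and, by the normalization $Q(0)=0$ (automatic from (\ref{3.350d}), as noted just before the Proposition) together with $\pi$-periodicity, one has $\sum_{k}q(k)=Q(0)=0$. Subtracting this vanishing sum, that is, replacing $1/(k^2-m^2)$ by $1/(k^2-m^2)+1/m^2=k^2/(m^2(k^2-m^2))$, produces an extra factor $k^2/m^2$ that upgrades the decay in the region $|k|\ll m$ and makes the Schur test succeed for all $\beta\in(\tfrac12,1)$; the same cancellation is used for the reverse map. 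Finally, I would point out that the excluded value $\alpha=1/2$ (equivalently $\beta=1/2$) is a genuine obstruction: a constant primitive $Q\equiv c\neq0$ lies in $H^{1/2}_{\exp}$, but its odd extension has a jump, so its sine coefficients decay only like $1/m$ and fail the $H^{1/2}$ sine condition. This is exactly the critical failure of odd reflection on $H^{1/2}$, mirroring the threshold $\tfrac12$ appearing in Lemma~\ref{lemsob}.
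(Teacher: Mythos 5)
Your argument is correct in substance and reaches the same conclusion, but by a more hands-on route than the paper. The paper also starts from the transition formulas between the exponential and sine coefficients of $Q$ (its (\ref{eqq}), (\ref{d.91})--(\ref{d.93})) and uses exactly your cancellation $\sum_k q(2k)=Q(0)=0$ in the range $0<\alpha<1/2$; but it then disposes of the resulting kernels wholesale by quoting the boundedness of the discrete Hilbert transform on the weighted spaces $\ell^2(\Omega_\delta)$, $|\delta|<1/2$ (Hunt--Muckenhoupt--Wheeden), applied to $\Omega_{1-\alpha}$ when $1/2<\alpha<1$ and to $\Omega_{-\alpha}$ (after converting $q$'s to $V$'s) when $0<\alpha<1/2$, and for the inclusion $H^{-\alpha}\subset\tilde H^{-\alpha}$ with $1/2<\alpha<1$ it additionally invokes the $C^1$-multiplier Lemma~\ref{lemsob} through the auxiliary function $Q_1=e^{ix}Q$. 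You instead prove the needed weighted bounds directly: unweighted discrete Hilbert transform near the diagonal plus Schur's test with power weights in the far field, exploiting that the combined kernel $\tfrac12\bigl(1/(k-m)\mp 1/(k+m)\bigr)$ decays faster off the diagonal than the bare Hilbert kernel. This is more computational but avoids both the weighted-norm inequality and the multiplier lemma, and it makes transparent exactly where the threshold $\beta=1/2$ and the normalization $Q(0)=0$ enter. Two points to tighten, neither a real gap: (i) near the diagonal, ``the weight factor is $\sim 1$'' does not by itself let you replace the weighted kernel by the Hilbert kernel, since a bounded multiplier need not preserve boundedness of a singular kernel; you should add that $\bigl|(1+m)^\beta(1+|k|)^{-\beta}-1\bigr|\lesssim |k\mp m|/m$ in that region, so the error kernel is absolutely Schur-summable. (ii) The map $\tilde q\mapsto q$ is not literally the transpose of $q\mapsto\tilde q$ (the factor $m$ is attached to opposite indices), so it requires its own Schur test; carrying it out shows it is bounded for all $\beta\in(0,1)$ with no cancellation needed, so your remark that ``the same cancellation is used for the reverse map'' is superfluous (and would in any case require a different vanishing sum). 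Your closing observation on the failure at $\alpha=1/2$ matches the paper's Remark~\ref{remH}, though note that $Q\equiv c$ corresponds to $v=0$, so it exhibits the ill-posedness of the definition of $\tilde H^{-1/2}$ rather than a potential separating the two spaces.
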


\begin{proof}
It is known that the discrete Hilbert transform
\begin{equation}
\label{HT} \mathcal{H}: \ell^2 (\mathbb{Z}) \to \ell^2 (\mathbb{Z}),
\quad  (\mathcal{H}x)_n = \sum_{k\neq n} \frac{x_k}{n-k}
\end{equation}
act continuously in the weighted spaces $\ell^2 (\mathbb{Z},
\Omega_\delta), \; \Omega_\delta (k)= (1+k^2)^{\delta/2} $ if
$|\delta | < 1/2 $ (\cite[Theorem 10]{HMW}, see also Lemma~32 in
\cite[Appendix]{DM24}). We use this fact several times in the proof.

First we show that  $\tilde{H}^{-\alpha} \subset H^{-\alpha}.$
Suppose $v \in \tilde{H}^{-\alpha}; $ then (\ref{3.350d}) holds, so
\begin{equation}
\label{3.351} (\tilde{q}(m))_{m\in \mathbb{N}} \in \ell^2
(m^{1-\alpha}, \mathbb{N}).
\end{equation}
Taking into account that
\begin{equation}
\label{FC} \int_0^\pi \sin mx \, e^{-i2k x} dx = \begin{cases}
0 & m= 2s>0, \; |k|\neq s;\\
 \frac{\pi}{2i} \, sgn (k) &  m=2|k|; \\
\frac{1}{2s-1-2k}+ \frac{1}{2s-1+2k} & m=2s-1,
\end{cases}
\end{equation}
we evaluate $ q(2k) = \frac{1}{\pi}\int_0^\pi \left (
\sum_{m=1}^\infty \tilde{q}(m) \sqrt{2} \sin mx \right ) e^{-i2k x}
dx: $
\begin{equation}
\label{eqq} q(2k) =\frac{-i}{\sqrt{2}} \tilde{q} (2|k|) \,sgn (k)
+\frac{\sqrt{2}}{\pi}\sum_{s=1}^\infty \tilde{q} (2s-1) \left (
\frac{1}{2s-1-2k} +\frac{1}{2s-1+2k}\right ).
\end{equation}

In the case $1/2 <\alpha <1,$ the latter sum can be regarded as a
discrete Hilbert transform of the sequence $\xi =(\xi_k)_{k \in
\mathbb{Z}},$ where
$$
\xi_k = 0 \quad \text{if $k$ is even} , \quad \xi_k =- sgn (k) \,
\tilde{q}(|k|) \quad \text{if $k$ is odd},
$$
that is, we have
$$
q(2k) =\frac{-i}{\sqrt{2}} \tilde{q} (2|k|) \,sgn (k)+
\frac{\sqrt{2}}{\pi}(\mathcal{H}\xi)_{2k}, \quad  (\mathcal{H} \xi)_n
=\sum_{k\neq n} \frac{\xi_k}{n-k}.
$$
Moreover, by (\ref{3.351}) we have $\xi \in \ell^2
(\Omega_{(1-\alpha)}, \mathbb{Z}) $ with $0<1-\alpha< 1/2,$  so it
follows that $\{(H\xi)_{2k}\}\in \ell^2 (\Omega_{(1-\alpha)},
2\mathbb{Z}).$ Therefore, (\ref{3.350}) holds, i.e., $v\in
H^{-\alpha}. $ Hence $\tilde{H}^{-\alpha} \subset H^{-\alpha} $ if
$1/2 <\alpha <1. $

In the case $0 < \alpha < 1/2, $ we multiply  (\ref{eqq}) by $(2k)i$
and obtain
\begin{equation}
\label{eqq1}
  V(2k) = \frac{1}{\sqrt{2}} \tilde{V}(2|k|) +
i\frac{\sqrt{2}}{\pi}\sum_{s=1}^\infty \tilde{V} (2s-1) \left (
\frac{1}{2s-1-2k} -\frac{1}{2s-1+2k}\right )
\end{equation}
because
$$ 2k  \left ( \frac{1}{2s-1-2k} + \frac{1}{2s-1+2k}\right )= (2s-1)  \left (
\frac{1}{2s-1-2k} -\frac{1}{2s-1+2k}\right ).
$$
The sum in (\ref{eqq1}) may be considered as a discrete Hilbert
transform of the sequence $u= u_k,$ where
$$
u_k = 0 \quad \text{if $k$ is even} , \quad u_k =- \tilde{V}(|k|)
\quad \text{if $k$ is odd},
$$
that is, we have
$$
V(2k) =\frac{1}{\sqrt{2}} \tilde{V} (2|k|) \,sgn (k)+
\frac{i\sqrt{2}}{\pi}(\mathcal{H} u)_{2k}.
$$
Since $(\tilde{V}(m)) \in \ell^2 (\Omega_{-\alpha}),\; 0 < \alpha <
1/2, $ we obtain
$$
u \in \ell^2 (\Omega_{-\alpha}) \;\; \Rightarrow \;\; \mathcal{H} u
\in \ell^2 (\Omega_{-\alpha})\;\; \Rightarrow \;\; (V(2k)) \in \ell^2
(\Omega_{-\alpha}) \;\; \Rightarrow \;\;  v \in H^{-\alpha}.
$$
This completes the proof of the inclusion $\tilde{H}^{-\alpha}
\subset H^{-\alpha}.$\bigskip

Next we show that $\tilde{H}^{-\alpha} \supset H^{-\alpha}.$
 Let $v \in H^{-\alpha}, \; 1/2 <\alpha <1. $
 Since $\tilde{q}(m)=\frac{1}{\pi} \int_0^\pi Q(x) \sqrt{2}\sin mx
\,dx, $ we obtain
$$
\tilde{q}(m) =\begin{cases} \frac{i}{\sqrt{2}} [(q(2k)- q(-2k)] &
\text{if} \;\; m =2k,\\ \frac{i}{\sqrt{2}} [q_1 (2k+2) - q_1 (-2k)]
&\text{if} \;\; m= 2k+1,
\end{cases}
$$
where $\{q_1 (s), \, s \in 2\mathbb{Z}\}$ are the Fourier
coefficients of the function $Q_1 (x) = e^{ix} \cdot Q(x).$ By
Lemma~\ref{lemsob}, we have
$$
Q\in H^{1-\alpha}  \Longleftrightarrow Q_1 \in H^{1-\alpha} \quad
\text{if} \;\;  1/2< \alpha < 1.
$$
Therefore, if $\frac{1}{2} < \alpha < 1 $ and $v \in H^\alpha,$ then
we obtain
$$
v \in H^{-\alpha} \Rightarrow  \text{(\ref{3.350})} \Rightarrow Q \in
H^{1-\alpha} \Rightarrow Q_1 \in H^{1-\alpha} \Rightarrow
\text{(\ref{3.350d})} \Rightarrow  \text{(\ref{3.35d})} \Rightarrow v
\in \tilde{H}^{-\alpha},
$$
i.e., $\tilde{H}^{-\alpha} \supset H^{-\alpha} $  if $\frac{1}{2} <
\alpha < 1. $

Next we show that $\tilde{H}^{-\alpha} \supset H^{-\alpha}  $ in the
case $0 < \alpha <\frac{1}{2}. $ Let $v \in H^{-\alpha}. $ Then, by
(\ref{3.350}) and the Cauchy inequality, $ \sum |q(2k)| < \infty, $
so $Q(x)= \sum q(2k) e^{i2kx} $ is continuous function and we have
\begin{equation}
\label{Dir0} Q(0) = \sum_{\mathbb{Z}} q(2k) =0  \;\; \Rightarrow \;\;
q(0) = - \sum_{k\neq 0} q(2k).
\end{equation}
We evaluate the coefficients $\tilde{q} (m), \; m \in \mathbb{N}:$
$$
\tilde{q} (m)= \frac{1}{\pi} \sqrt{2}\int_0^\pi Q(x) \sin mx \, dx =
\sum_{k\in \mathbb{Z}} q(2k) \frac{\sqrt{2}}{\pi} \int_0^\pi e^{i2kx}
\sin mx \, dx.
$$
In view of (\ref{FC}), we obtain
\begin{equation}
\label{d.91} \tilde{q} (m)= \frac{i}{\sqrt{2}}[q(m) - q(-m)] \quad
\text{for even} \;\; m,
\end{equation}
\begin{equation}
\label{d.92} \tilde{q} (m)= \frac{\sqrt{2}}{\pi} \sum_{k \in
\mathbb{Z}} q(2k) \left ( \frac{1}{m+2k}+\frac{1}{m-2k} \right )
\quad \text{for odd} \;\; m.
\end{equation}
By (\ref{Dir0}), (\ref{d.92}) implies
\begin{equation}
\label{d.93} \tilde{q} (m)= \frac{\sqrt{2}}{\pi} \sum_{k \neq 0}
q(2k) \left ( \frac{1}{m+2k}+\frac{1}{m-2k} - \frac{2}{m} \right )
\quad \text{for odd} \;\; m.
\end{equation}
Since $\tilde{V} (m)=m \,\tilde{q} (m), \; V(2k) = i(2k) q(2k)$ and
$$\frac{1}{m+2k}+\frac{1}{m-2k} - \frac{2}{m}=\frac{2k}{m}
\left ( \frac{1}{m-2k}-\frac{1}{m+2k} \right ),
$$
from (\ref{d.93}) it follows that
\begin{equation}
\label{d.95} \tilde{V} (m)= \frac{1}{i}\sum_{k \neq 0}
\frac{V(2k)}{m-2k} -\frac{1}{i}\sum_{k \neq 0} \frac{V(2k)}{m+2k}
=\frac{1}{i} \left [(\mathcal{H}w)_m - (\mathcal{H}w)_{-m} \right ]
\quad \text{for odd} \;\; m,
\end{equation}
where
$$
w_s =\begin{cases}   0   & \text{if} \;\;   s= 2k-1,\\ V(2k) &
\text{if} \;\; s=2k.
\end{cases}
$$
By  (\ref{3.35}), we know that $w \in \ell^2 (\Omega_{-\alpha},
\mathbb{Z}), $  so $\mathcal{H}w \in \ell^2 (\Omega_{-\alpha},
\mathbb{Z}) $ also. Therefore, by (\ref{d.91}) and (\ref{d.95}) we
conclude that $(\tilde{V}(m)) \in \ell^2 (\Omega_{-\alpha},
\mathbb{Z}),$  i.e., $v \in \tilde{H}^{-\alpha}.$ Hence,
$\tilde{H}^{-\alpha} \supset H^{-\alpha}  $ if $0 <\alpha < 1/2.$
This completes the proof.
\end{proof}

\begin{Remark}
\label{remH} The definition (\ref{3.35d}) of the classes
$\tilde{H}^{-\alpha}$ for $1/2 <\alpha < 1$ is correct (although if
we add a constant $C$ to $Q$ then for odd $m$ the coefficients
$\tilde{V}(m)$ will change by $2C$). But we cannot define a class
$\tilde{H}^{-1/2} $ by (\ref{3.35d}) with $\alpha = 1/2 $ because
such a definition will depend essentially on the choice of an
arbitrary additive constant.
\end{Remark}

\bigskip

2. Our main result in this section is the following theorem.

\begin{Theorem}
\label{thmec} Let $S_N, \, S_N^0 $ be the spectral projections
defined by (\ref{ec20}) for the Hill operators $L_{bc} (v) $ and
$L_{bc}^0 $ subject to the boundary conditions $bc = Per^\pm $ or $
Dir. $

(a) If  $v \in H^{-\alpha} $ with $ \alpha \in (0,1/2),  $ then
\begin{equation}
\label{ec1} \|S_N-S_N^0: L^1 \to L^\infty \| =o(N^{\alpha
-\frac{1}{2}}), \quad N\to \infty.
\end{equation}

(b) If $bc = Per^\pm $ and $v \in H^{-1/2} $ or $bc = Dir $ and
$v=Q^\prime $ with $Q= \sum_{m\in \mathbb{N}} \tilde{q} (m) \sin mx,
\;$ $\sum_{m\in \mathbb{N}} |\tilde{q}(m)|^2 \,m <\infty, $ then
\begin{equation}
\label{ec2} \|S_N - S_N^0: L^1 \to L^\infty \| \to 0 \quad \text{as}
\;\; N \to \infty.
\end{equation}

(c)  If  $v \in H^{-\alpha} $ with $ \alpha \in (1/2,1)  $ and
$a=\frac{2}{3-2\alpha} $ (i.e., $\frac{1}{a}=\frac{3}{2}-\alpha$),
then
\begin{equation}
\label{ec3} \|S_N - S_N^0: L^a \to L^\infty \| \to 0 \quad \text{as}
\;\; N \to \infty.
\end{equation}
\end{Theorem}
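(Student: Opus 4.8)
The plan is to start from the decomposition $S_N-S_N^0=T_N+B_N$ of (\ref{ec2a}), send the horizontal and left sides of $\partial\Pi_N$ to infinity (Lemmas~\ref{lem405} and \ref{lem407}) so that both pieces become integrals over the line $\Lambda_N$, and then treat the two pieces by entirely different tools. The remainder $B_N$, built from the terms $U(m)$ with $m\ge3$, I would control by the diagram/factorization technique of Sections~2--3; the single term $U(2)=R^0VR^0$ making up $T_N$ I would instead attack through its explicit matrix representation (Lemmas~\ref{lin} and \ref{linD}). In all three parts the analysis of $T_N$ is the heart of the matter and $B_N$ turns out to be of strictly lower order, so I describe $T_N$ first.

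For parts (a) and (b) the target is $\|T_N:L^1\to L^\infty\|$. As in (\ref{3.22}), the basis bound (\ref{0.051}) gives $\|T_Nf\|_\infty\lesssim\|f\|_1\,\Sigma(N)$ with $\Sigma(N)=\sum_{(m,k)\in X(N)}\frac{|V(m-k)|}{|m^2-k^2|}$. Grouping by $j=m-k$ and writing $S_j(N)=\sum_{(m,k)\in X(N),\,m-k=j}|m^2-k^2|^{-1}$, the Cauchy--Schwarz inequality and the definition (\ref{3.35}) of $H^{-\alpha}$ yield
\[
\Sigma(N)\le\|v\|_{H^{-\alpha}}\Big(\sum_j(1+j^2)^\alpha S_j(N)^2\Big)^{1/2}.
\]
The needed arithmetic input is the weighted analog of Lemmas~\ref{lem105}--\ref{lem106}, namely $\sum_j(1+j^2)^\alpha S_j(N)^2\lesssim N^{2\alpha-1}$ for $0<\alpha<1$, which gives the critical bound $\Sigma(N)\lesssim N^{\alpha-1/2}$. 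To upgrade $O(N^{\alpha-1/2})$ to the sharp $o(N^{\alpha-1/2})$ of (a), and to reach $\to0$ in the borderline case $\alpha=1/2$ of (b), I would split the frequencies at a level $J=J(N)\to\infty$ with $J=o(N)$: on $|j|>J$ the factor $\|v\|_{H^{-\alpha}}$ is replaced by the tail $\varepsilon_J=\big(\sum_{|j|>J}|V(j)|^2(1+j^2)^{-\alpha}\big)^{1/2}\to0$, while the terms $|j|\le J$ contribute $\lesssim\|v\|_{H^{-\alpha}}\,J^{\alpha+1/2}/N$; taking e.g. $J=\sqrt N$ makes both pieces $o(N^{\alpha-1/2})$ (and $o(1)$ when $\alpha=1/2$).

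For part (c), where $\alpha\in(1/2,1)$ and $N^{\alpha-1/2}\to\infty$, the $L^1\to L^\infty$ estimate is useless and one must exploit the smaller domain $L^a$. By Hausdorff--Young (\ref{F}) and $\|\mathcal F^{-1}:\ell^1\to\mathcal C\|\le\sqrt2$ it suffices to bound the matrix operator $T_N:\ell^{a'}\to\ell^1$ with $1/a'=\alpha-1/2$. Passing to absolute values and summing the rows gives $\|T_N:\ell^{a'}\to\ell^1\|\le\|C\|_{\ell^a}$, where $C_k=\sum_m|T_N(m,k)|$; a per-column Cauchy--Schwarz against (\ref{3.35}) reduces $\|C\|_{\ell^a}$ to a second weighted sum over $X(N)$, which (again via an Appendix lemma) is $O(1)$ precisely at the critical exponent $1/a=3/2-\alpha$. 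The same splitting at $J=o(N)$ as above then gives $\|C\|_{\ell^a}\lesssim\varepsilon_J+J/N\to0$, hence $\|T_N:L^a\to L^\infty\|\to0$.

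Finally, $B_N=\frac1{2\pi i}\int_{\Lambda_N}\sum_{m\ge3}U(m)\,dy$ is handled as in Section~3: I factor each $U(m)=\mathcal F^{-1}D^{m-1}\tilde R^0\mathcal F$ through weighted sequence spaces adapted to $(V(k))\in\ell^2(\Omega_{-\alpha})$, use Lemma~\ref{lemA} to obtain $\|D\|<1/2$ on $\Lambda_N$ for large $N$, sum the geometric series, and integrate the two end factors $A(z;\cdot)$ over $\Lambda_N$ by Corollaries~\ref{cor17A}--\ref{cor17B}. Since each $m\ge3$ term carries an extra smoothing factor $VR^0$ of size $\lesssim A(z;\cdot)$, the contribution of $B_N$ is of strictly lower order than the $T_N$ bound, hence negligible in (a) and $o(1)$ in (b),(c). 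The Dirichlet case is identical once the extra term $\tilde V(m+k)$ of (\ref{3.21d}) is included: on $X(N)$ one has $m+k>N$, and Proposition~\ref{propeq} ($\tilde H^{-\alpha}=H^{-\alpha}$) guarantees $(\tilde V(m))\in\ell^2(\Omega_{-\alpha})$, so the same sums apply. I expect the main obstacle to be exactly the two weighted boundary sums over $X(N)$ — the row sum $\sum_j(1+j^2)^\alpha S_j(N)^2\lesssim N^{2\alpha-1}$ and the column-sum norm $\|C\|_{\ell^a}$ at the critical exponent — together with the $J=o(N)$ refinement that converts critical-exponent boundedness into genuine convergence to zero.
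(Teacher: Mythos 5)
Your treatment of $T_N$ is sound and is essentially a reorganization of what the paper does in Proposition~\ref{propAN}: your diagonal sums $S_j(N)$ with the weight $(1+j^2)^\alpha$ placed on $V$ are equivalent, via $V(j-k)=i(j-k)q(j-k)$ and hence $\frac{|V(j-k)|}{|j^2-k^2|}=\frac{|q(j-k)|}{|j+k|}$, to the row sums $\sigma_1(a,N),\sigma_2(a,N)$ of Lemmas~\ref{lemE} and \ref{lemd}; your column-sum bound $\|T_N:\ell^{a^*}\to\ell^1\|\le\|C\|_{\ell^a}$ in part (c) is exactly the paper's H\"older step leading to (\ref{ec34}); and your splitting at $J=o(N)$ is the paper's splitting at $H$, which is indeed what converts critical-exponent boundedness into the $o(\cdot)$ statements. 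The weighted arithmetic lemma you isolate is true (one must check the pairs in $X(N)$ with $|m+k|$ small, which occur for $|m-k|$ near $2N$ in the periodic case and for the $\tilde V(m+k)$ term in the Dirichlet case; the resulting logarithms average out to $O(N^{2\alpha-1})$), but you should be aware it is not a one-line consequence of Lemmas~\ref{lem105}--\ref{lem106}.

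The genuine gap is in your treatment of $B_N$. You propose to reuse the Section~2--3 factorization $U(m)=\mathcal F^{-1}D^{m-1}\tilde R^0\mathcal F$ with $D=\tilde R^0\mathcal F V\mathcal F^{-1}$, "through weighted sequence spaces adapted to $(V(k))\in\ell^2(\Omega_{-\alpha})$", and you assert that each factor $VR^0_\lambda$ has size $\lesssim A(z;\cdot)$. This fails for singular potentials: the step $\ell^1\stackrel{\mathcal F^{-1}}{\to}L^\infty\stackrel{V}{\to}L^1$ uses $\|v\|_{L^1}$, and its sequence-space avatar $\mathcal F V\mathcal F^{-1}:\ell^1\to\ell^\infty$ has norm $\sup_k|V(k)|$, which is not finite for general $v\in H^{-\alpha}$ (membership of $(V(k))$ in $\ell^2(\Omega_{-\alpha})$ does not give boundedness, let alone smallness after one resolvent factor). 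No choice of weights fixes this, because the obstruction is that the one-sided products $VR^0_\lambda$ and $R^0_\lambda V$ individually admit no good bounds -- the paper says so explicitly at the start of Section~4.3. The paper's resolution, which your proposal is missing, is to rewrite $R^0_\lambda(VR^0_\lambda)^m=K_\lambda(K_\lambda VK_\lambda)^mK_\lambda$ with $K_\lambda^2=R^0_\lambda$ and to estimate the Hilbert--Schmidt norm of the symmetrized block $K_\lambda VK_\lambda$ (Lemma~\ref{lemBE}), exploiting $V(k)=ikq(k)$ with $q\in\ell^2$; the smallness $\|K_\lambda VK_\lambda\|_{HS}<1/2$ on $\Lambda_N$ and the integrals of $a_N(y)\psi_N(y)$ then give Proposition~\ref{propBN}. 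A related, smaller omission: for singular $v$ one cannot simply "send the left vertical side to infinity" by Lemma~\ref{lem407}; the paper keeps both lines $\Lambda_N$ and $\Lambda_N^-$ in (\ref{e1}) and needs the extra argument of Remark~\ref{rem107} to discard the left one. Without the $K_\lambda VK_\lambda$ device your bound on $B_N$ does not get off the ground, so the proof as proposed is incomplete precisely on the "easy" half of the decomposition.
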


\begin{proof}
By (\ref{ec2a}), we have $S_N - S_N^0= T_N + B_N, $ where
 the operators $T_N $ and $B_N $ are defined by (\ref{ec21})
and (\ref{ec22}). We estimate appropriate norms of the operators
$T_N$ and $B_N $ in Propositions~\ref{propBN} and \ref{propAN} below.
The results obtained there are of independent interest -- they are
more general than the estimates leading to (\ref{ec1})--(\ref{ec3}).

First we prove (a). Let $v \in H^{-\alpha} $ with $\alpha \in (0,
1/2). $ Then  Proposition~\ref{propeq} and
(\ref{3.35})--(\ref{3.350d}) imply that
\begin{equation}
\label{e55} q\in \ell^2 (\Omega), \quad \tilde{q} \in \ell^2
(\Omega), \quad \Omega (k) = (1+k^2)^{(1-\alpha)/2}.
\end{equation}
Therefore, by (b) in Proposition~\ref{propAN} (with $a=1$ and $
\delta= 1-\alpha$ in (\ref{e25})) we have
$$
\|T_N: L^1 \to L^\infty\| \lesssim \frac{H}{N^{1/2}}
+\mathcal{E}^\Omega_{H_N}(q) N^{\alpha-\frac{1}{2}}.
$$
So, choosing $H_N= N^{\alpha}/\log N $ we obtain
$$
\|T_N: L^1 \to L^\infty\| = o \left (N^{\alpha-\frac{1}{2}} \right )
$$
because $\mathcal{E}^\Omega_{H_N}(q) \to 0$ as $N \to 0.$

On the other hand, by (c) in Proposition~\ref{propBN} (see
(\ref{ecSob}) with $\delta= 1-\alpha $) we have
$$
\|B_N: L^1 \to L^\infty\| \lesssim  N^{\alpha-1} (\log N)^2 = o
\left (N^{\alpha-\frac{1}{2}} \right ).
$$
Hence (\ref{ec1}) holds.

By the assumption of (b), it follows  that (\ref{e55}) holds with
$\alpha = 1/2. $  Indeed, if $v \in H^{-1/2}, $ this follows from
(\ref{3.35}) and (\ref{3.350}), and it is assumed that $\tilde{q}
\in \ell^2 (\Omega)$ with $ \Omega (k) = (1+k^2)^{1/4}.$ Now the
same argument as in the proof of (a) shows that (\ref{ec2}) holds.

Finally, we prove (c). Let $v \in H^{-\alpha} $ with $\alpha \in (
1/2, 1). $ As in the proof of (a),  Proposition~\ref{propeq} and
(\ref{3.35})--(\ref{3.350d}) imply that (\ref{e55}) holds. Therefore,
by (b) in Proposition~\ref{propAN}  with $a=\frac{2}{3-2\alpha}, $ $
\delta= 1-\alpha $ and $H=N^{a/4}$ in (\ref{e25}), we obtain
$$
\|T_N: L^a \to L^\infty\| \lesssim \frac{1}{N^{1/4}} +
\mathcal{E}^\Omega_{N^{a/4}}(q) \to 0 \quad \text{as} \;\; N \to
\infty.
$$
On the other hand, in view (c) in Proposition~\ref{propBN} (see
(\ref{ecSob}) with $\delta= 1-\alpha $) we have
$$
\|B_N: L^a \to L^\infty\| \leq \|B_N: L^1 \to L^\infty\| \to 0 \quad
\text{as} \;\; N \to \infty.
$$
Hence (\ref{ec3}) holds, which completes the proof.
\end{proof}
\bigskip

3. Our proofs are based on the Fourier analysis approach to the
theory of Hill operators with singular potentials developed in
\cite{DM16}. Below we recall some basic formulas related to this
approach.

 In general, there are no good estimates for the norms of
 $      R^0_\lambda V  $ and
$ V R^0_\lambda $ in the case of singular potentials. Therefore, now
we write the standard perturbation type formula for the resolvent
$R_\lambda $ in the form
\begin{equation}
\label{4.23} R_\lambda = R^0_\lambda + R^0_\lambda V R^0_\lambda +
 \cdots = K^2_\lambda +
\sum_{m=1}^\infty K_\lambda(K_\lambda V K_\lambda)^m K_\lambda,
\end{equation}
where
\begin{equation}
\label{4.24} (K_\lambda)^2 = R^0_\lambda.
\end{equation}
We define an operator $K= K_\lambda $ with the property (\ref{4.24})
by its matrix representation
\begin{equation}
\label{4.25} K_{jm} = \frac{1}{(\lambda - j^2)^{1/2}}
\delta_{jm},\qquad j,m \in \Gamma_{bc},
\end{equation}
where $$z^{1/2} = \sqrt{r} e^{i\varphi/2} \quad \mbox{if} \quad z=
re^{i\varphi}, \;\; 0\leq \varphi < 2\pi. $$

Then $R_\lambda $ is well--defined if
\begin{equation}
\label{4.26} \|K_\lambda V K_\lambda:  \; \ell^2 (\Gamma_{bc})  \to
\ell^2 (\Gamma_{bc})\| < 1.
\end{equation}

In view of (\ref{4.7}) and (\ref{4.25}), the matrix representation
of $KVK$ for periodic or anti--periodic boundary conditions $bc =
Per^\pm$ is
\begin{equation}
\label{4.29} (KVK)_{jm} = \frac{V(j-m)}{(\lambda -
j^2)^{1/2}(\lambda - m^2)^{1/2}} =i\, \frac{(j-m)q(j-m)}{(\lambda -
j^2)^{1/2}(\lambda - m^2)^{1/2}},
\end{equation}
where $j,m \in 2\mathbb{Z} $ for $bc = Per^+,$ and $j,m \in 1+
2\mathbb{Z} $ for $bc=Per^-.$ Therefore, we have for its
Hilbert--Schmidt norm (which dominates its $\ell^2 $-norm)
\begin{equation}
\label{4.30} \|KVK\|_{HS}^2 = \sum_{j,m \in \Gamma_{Per^\pm}}
\frac{(j-m)^2 |q(j-m)|^2} {|\lambda - j^2| |\lambda - m^2|}.
\end{equation}

In the case $bc =Dir,$ we obtain by (\ref{d.10}) and (\ref{4.25})
that
\begin{equation}
\label{4.29a} (KVK)_{jm}= \frac{1}{\sqrt{2}}
\frac{|j-m|\,\tilde{q}(|j-m|)-(j+m) \,\tilde{q}(j+m)}{(\lambda -
j^2)^{1/2}(\lambda - m^2)^{1/2}}, \quad j,m\in \mathbb{N}.
\end{equation}
Thus,
\begin{equation}
\label{4.30a} \|KVK\|_{HS}^2 \leq \sum_{j,m \in \mathbb{N}}
\frac{(j-m)^2 |\tilde{q}(j-m)|^2+ (j+m)^2 |\tilde{q}(j+m)|^2}
{|\lambda - j^2| |\lambda - m^2|}.
\end{equation}

In view of (\ref{4.30}) and (\ref{4.30a}), we can estimate from
above the Hilbert-Schmidt norm $\|KVK\|_{HS}$ by one and the same
formula in all three cases $bc= Per^+, \, Per^-, \, Dir.$ Indeed, if
we set $q(k) = 0 $ for $k \in 2\mathbb{Z}+1 $ if $bc= Per^\pm,$ and
$q(k) =\tilde{q} (|k|) $ if $bc= Dir,$  then $q\in \ell^2
(\mathbb{Z})$ and we have
\begin{equation}
\label{Z} \|KVK\|^2_{HS} \leq \sum_{j,m \in \mathbb{Z}} \frac{(j-m)^2
|q(j-m)|^2} {|\lambda - j^2| |\lambda - m^2|}, \quad bc=Per^\pm,\;
Dir.
\end{equation}
\bigskip

4. Next we estimate the Hilbert-Schmidt norm of the operator
$K_\lambda VK_\lambda $  for $\lambda = N^2 +N +i y, \; y \in
\mathbb{R}.$ For a sequence $q=(q(k))\in \ell^2, $ or $q=(q(k))\in
\ell^2 (\Omega) $ we set
\begin{equation}
\label{E} \mathcal{E}_M (q) = \left (\sum_{|k|\geq M} |q(k)|^2 \right
)^{1/2}, \quad \mathcal{E}^\Omega_M (q) = \left (\sum_{|k|\geq M}
|q(k)|^2 (\Omega (k))^2 \right )^{1/2}.
\end{equation}

\begin{Lemma}
\label{lemBE}  For $q= (q(k))_{k\in \mathbb{Z}} \in \ell^2
(\mathbb{Z})$ we set
\begin{equation} \label{4.33}  \psi_N (y) =
\sum_{j,m \in \mathbb{Z}} \frac{(j-m)^2 |q(j-m)|^2} {|\lambda - j^2|
|\lambda - m^2|}, \quad \lambda = N^2 +N + iy.
\end{equation}
Then
\begin{equation}
\label{4.34} \psi_N (y) \leq  N^2 \left (  \frac{\|q\|^2}{N}  + 16
(\mathcal{E}_{\sqrt{N}}(q))^2 \right ) b_N (y) + 16
(\mathcal{E}_{4N}(q))^2 a_N (y),
\end{equation}
where
\begin{equation}
\label{4.35} a_N (y) = \sum_{k\in \mathbb{Z}} \frac{1}{|\lambda -
k^2|}, \quad b_N (y) = \sum_{k\in \mathbb{Z}} \frac{1}{|\lambda -
k^2|^2}, \quad \lambda = N^2 +N + iy.
\end{equation}
Moreover, if $|y| \geq N^8,$  then we have
\begin{equation}
\label{4.36} \psi_N (y) \leq  N^2 \left (  \frac{\|q\|^2}{N}  + 16
(\mathcal{E}_{\sqrt{N}}(q))^2 \right ) b_N (y) + \left
(\frac{\|q\|^2}{\sqrt{|y|}} + 16(\mathcal{E}_{|y|^{1/4}}(q))^2
\right ) a_N (y).
\end{equation}
\end{Lemma}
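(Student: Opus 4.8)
The plan is to reduce everything to a one-parameter family of sums and to estimate those by three elementary bounds. Introducing $n=j-m$ as a new summation index and keeping $\lambda=N^2+N+iy$, I would rewrite
\[
\psi_N(y)=\sum_{n\in\mathbb{Z}} n^2|q(n)|^2\,S_n(y),\qquad
S_n(y)=\sum_{m\in\mathbb{Z}}\frac{1}{|\lambda-(m+n)^2|\,|\lambda-m^2|}.
\]
The whole difficulty is the weight $n^2$, since only $q\in\ell^2$ is assumed: for the part of the sum with $|n|$ not too large the factor $n^2$ is harmless, while for large $|n|$ it must be absorbed by the denominators. I would record three building blocks for $S_n(y)$. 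First, Cauchy--Schwarz together with the shift-invariance of $\sum_m|\lambda-m^2|^{-2}$ gives $S_n(y)\le b_N(y)$ for every $n$. Second, since $\operatorname{Im}\lambda=y$ forces $|\lambda-k^2|\ge|y|$, one has $S_n(y)\le a_N(y)/|y|$. Third --- and this is the crucial one --- for $|n|\ge 4N$ I claim $n^2 S_n(y)\le 16\,a_N(y)$.

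The key bound is where the real work lies, and I expect it to be the main (though elementary) obstacle. Writing $j=m+n$ and using $n^2=(j-m)^2\le 2j^2+2m^2$, I would dominate
\[
n^2 S_n(y)\le 2\sum_{m}\frac{j^2}{|\lambda-j^2|\,|\lambda-m^2|}+2\sum_{m}\frac{m^2}{|\lambda-j^2|\,|\lambda-m^2|},\qquad j=m+n.
\]
In the first sum I split according to $|j|\ge 2N$ or $|j|<2N$. For $|k|\ge 2N$ one has $|\lambda-k^2|\ge k^2-N^2-N\ge\tfrac12 k^2$, hence $k^2/|\lambda-k^2|\le 2$; this disposes of the terms with $|j|\ge 2N$, leaving $\le 2a_N(y)$. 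For the terms with $|j|<2N$ the hypothesis $|n|\ge 4N$ forces $|m|\ge 2N$, so $j^2<4N^2\le m^2$ and $j^2/|\lambda-m^2|\le 2j^2/m^2\le 2$; thus the numerator is again absorbed and what remains is $\le 2a_N(y)$. Each of the two sums is therefore $\le 4a_N(y)$, and $n^2 S_n(y)\le 16\,a_N(y)$ follows. The delicate point is precisely that the bookkeeping must be arranged so that in every case the surviving factor is the convergent resolvent sum $a_N=\sum_k|\lambda-k^2|^{-1}$ and never the divergent $\sum_k k^2/|\lambda-k^2|$.

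With these bounds in hand, \eqref{4.34} follows by splitting the $n$-sum at the thresholds $\sqrt N$ and $4N$. For $|n|\le\sqrt N$ I use $n^2\le N$ and $S_n\le b_N$, giving at most $N\|q\|^2 b_N=N^2(\|q\|^2/N)\,b_N$; for $\sqrt N<|n|<4N$ I use $n^2\le 16N^2$ and $S_n\le b_N$, giving at most $16N^2(\mathcal{E}_{\sqrt N}(q))^2\,b_N$; and for $|n|\ge 4N$ the key bound $n^2 S_n\le 16 a_N$ gives at most $16(\mathcal{E}_{4N}(q))^2\,a_N$. Summing the three contributions reproduces exactly the right-hand side of \eqref{4.34}.

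Finally, for $|y|\ge N^8$ one has $|y|^{1/4}\ge N^2\ge 4N$, so I would keep the two $b_N$-contributions coming from $|n|<4N$ unchanged and re-split the tail $|n|\ge 4N$ at $|y|^{1/4}$. On $4N\le|n|<|y|^{1/4}$ the second building block $S_n\le a_N/|y|$ combined with $n^2<\sqrt{|y|}$ yields $n^2 S_n\le a_N/\sqrt{|y|}$, hence a contribution of at most $(\|q\|^2/\sqrt{|y|})\,a_N$; on $|n|\ge|y|^{1/4}$ the key bound again gives at most $16(\mathcal{E}_{|y|^{1/4}}(q))^2\,a_N$. Adding these two pieces to the unchanged $b_N$-terms yields \eqref{4.36}.
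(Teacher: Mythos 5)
Your proposal is correct and follows essentially the same route as the paper: the same change of variable $n=j-m$, the same splitting of the $n$-sum at $\sqrt N$ and $4N$ with Cauchy--Schwarz giving the $b_N$ factor on the first two ranges, the same absorption of $n^2$ into the denominators on the tail (yielding the constant $16\,a_N$), and the same re-splitting of the tail at $|y|^{1/4}$ for the case $|y|\ge N^8$. The only cosmetic difference is in the key pointwise bound for $|n|\ge 4N$: the paper keeps $n^2$ intact and splits on $|m|\gtrless |n|/2$, whereas you first use $n^2\le 2j^2+2m^2$ and split on $|j|\gtrless 2N$; both give the identical estimate.
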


\begin{proof}
In view of (\ref{4.33}),
\begin{equation}
\label{4.38} \psi_N (y) = \sum_{s \in \mathbb{Z}}\left (\sum_{m \in
\mathbb{Z}} \frac{s^2 |q(s)|^2} {|\lambda - (m+s)^2| |\lambda -
m^2|} \right ) = \sigma_1  +\sigma_2 +\sigma_3,
\end{equation}
where
\begin{equation}
\label{4.39}  \sigma_1= \sum_{|s| \leq \sqrt{N}}  \cdots,  \quad
\sigma_2 = \sum_{\sqrt{N} < |s| \leq 4N} \cdots, \quad \sigma_3 =
\sum_{|s| > 4N} \cdots.
\end{equation}
The Cauchy inequality implies that
\begin{equation}
\label{4.40}   \sum_{m\in \mathbb{Z}} \frac{1}{|\lambda -m^2 | |
\lambda - (m+s)^2|} \leq \sum_{m\in \mathbb{Z}} \frac{1}{|\lambda
-m^2|^2}.
\end{equation}
Therefore, in view of (\ref{4.35}), it follows that
\begin{equation}
\label{4.41}  \sigma_1  \leq  \left (\sum_{|s| \leq \sqrt{N}} s^2
|q(s)|^2 \right ) b_N (y) \leq N \|q\|^2 b_N (y)
\end{equation}
and
\begin{equation}
\label{4.42}  \sigma_2  \leq  \left ( \sum_{\sqrt{N} < |s| \leq 4N}
s^2 |q(s)|^2 \right ) b_N (y) \leq (4N)^2
(\mathcal{E}_{\sqrt{N}}(q))^2 b_N (y).
\end{equation}
Next we estimate $\sigma_3.$  In view of (\ref{4.38}) and
(\ref{4.39}),
$$
\sigma_3  \leq (\mathcal{E}_{4N}(q))^2 \cdot \sup_{|s|>4N} \sum_m
\frac{s^2}{|\lambda - (m+s)^2| |\lambda - m^2|}.
$$
If $|s|> 4N $  and $\lambda = N^2 +N + iy,$ then
\begin{equation}
\label{4.44} \frac{s^2}{|\lambda - (m+s)^2| |\lambda - m^2|} \leq
\frac{8}{|\lambda - m^2|}+\frac{8}{|\lambda - (m+s)^2|}.
\end{equation}
Indeed, if $ |m| \geq |s|/2,$ then  (since $ |s|/4 >N  $)
$$|\lambda - m^2| \geq m^2 - |Re \, \lambda | \geq  s^2/4 -
(N^2 +N)
> s^2/4 - 2(|s|/4)^2 \geq s^2/8, $$
so (\ref{4.44}) holds. If $ |m| < |s|/2,$ then $|m+s| \geq |s|/2,$
and as above it follows that $|\lambda -(m+s)^2| \geq s^2/8, $ so
(\ref{4.44}) holds also. Therefore,  $$ \sup_{|s|>4N} \sum_m
\frac{s^2}{|\lambda - (m+s)^2| |\lambda - m^2|} \leq \sum_m
\frac{16}{|\lambda - m^2|} = 16 \, a_N (y),
$$
so we obtain
\begin{equation}
\label{4.45} \sigma_3  \leq 16 (\mathcal{E}_{4N}(q))^2  a_N (y).
\end{equation}
Now, in view of (\ref{4.38}), the estimates (\ref{4.41}),
(\ref{4.42}) and (\ref{4.45}) imply (\ref{4.34}).

Next we prove (\ref{4.36}). To this end we estimate $\sigma_3=
\sigma_3 (y) $ for $|y| > N^8. $ Then
$$
\sigma_3 = \sum_{4N <|s| \leq |y|^{1/4}} \cdots + \sum_{ |s|>
|y|^{1/4}} \cdots = \sigma_{3,1} + \sigma_{3,2}.
$$
If $|s|<|y|^{1/4},$ then  $$  \frac{s^2}{|\lambda - (m+s)^2|} \leq
\frac{|y|^{1/2}}{|Im \, \lambda|} = \frac{1}{|y|^{1/2}}, $$ so
$$
\sigma_{3,1} \leq \frac{1}{|y|^{1/2}} \left (\sum_{4N <|s| \leq
|y|^{1/4}} |q(s)|^2 \right ) \sum_m \frac{1}{|\lambda -m^2|} \leq
\frac{\|q\|^2}{\sqrt{|y|}} a_N (y).
$$
On the other hand, by (\ref{4.44})
$$
\sigma_{3,2} \leq \left (\sum_{|s| > |y|^{1/4}} |q(s)|^2 \right )
\cdot 16 a_N (y) \leq 16(\mathcal{E}_{|y|^{1/4}}(q))^2a_N (y),
$$
which completes the proof.
\end{proof}

Lemma \ref{lemBE} is a modification of \cite[Lemma 19]{DM16}. We need
also the following lemma which is a modification of \cite[Lemma
20]{DM16}.

\begin{Lemma}
\label{lemBR} In the above notations, for $bc= Per^\pm $ or $Dir, $
if $h\geq N$ then
\begin{equation}
\label{BR} \sup \{\|K_\lambda V K_\lambda\|_{HS}: \; |Re \,
\lambda|\leq N^2 +N, \;|Im \, \lambda| \geq h \} \lesssim \frac{(\log
h)^{\frac{1}{2}}}{h^{1/4}}\|q\| + \mathcal{E}_{4\sqrt{h}}(q),
\end{equation}
where $q$ is replaced by $\tilde{q}$ if $bc= Dir.$
\end{Lemma}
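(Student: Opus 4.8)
The plan is to bound the Hilbert--Schmidt norm through the scalar quantity
$$\psi(\lambda)=\sum_{j,m\in\mathbb{Z}}\frac{(j-m)^2|q(j-m)|^2}{|\lambda-j^2|\,|\lambda-m^2|},$$
which by (\ref{Z}) dominates $\|K_\lambda V K_\lambda\|_{HS}^2$ in all three cases $bc=Per^\pm,Dir$ (reading $q$ as $\tilde q$ for $Dir$). Writing $\lambda=x+iy$ with $|x|\le N^2+N$ and $|y|\ge h\ge N$, and substituting $s=j-m$, I would record $\psi(\lambda)=\sum_s s^2|q(s)|^2\,\Sigma_s$, where $\Sigma_s=\sum_m|\lambda-(m+s)^2|^{-1}|\lambda-m^2|^{-1}$, and introduce the two lattice sums $a(\lambda)=\sum_k|\lambda-k^2|^{-1}$ and $b(\lambda)=\sum_k|\lambda-k^2|^{-2}$, exactly as $a_N,b_N$ in Lemma~\ref{lemBE} but now with a general center $x$ rather than $x=N^2+N$.

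First I would establish the two pointwise estimates
$$b(\lambda)\lesssim h^{-3/2},\qquad a(\lambda)\lesssim 1,$$
uniformly for $|x|\le N^2+N$, $|y|\ge h\ge N$. Both follow from counting the integers $k$ with $k^2$ in the dyadic shell $|x-k^2|\in[2^j|y|,2^{j+1}|y|)$: the number of such $k$ is $\lesssim 2^j|y|/\sqrt{|x|}$ when $|x|\gtrsim 2^j|y|$ and $\lesssim\sqrt{2^j|y|}$ otherwise, and since $|\lambda-k^2|\ge|y|\ge h$ forces every denominator to be at least $h$, summing the geometric series over $j\ge0$ gives the claimed bounds (the worst case for $b$ is $x\sim N^2$, $|y|=h$, which yields $1/(Nh)\le h^{-3/2}$ whenever $h\le N^2$). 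These are the uniform-in-$x$ analogues of the estimates for $a_N(y),b_N(y)$ used earlier.

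Next I would split the frequency sum at $|s|=4\sqrt{h}$. For the small frequencies the Cauchy inequality (\ref{4.40}) gives $\Sigma_s\le b(\lambda)$, while $\sum_{|s|\le 4\sqrt{h}}s^2|q(s)|^2\le 16h\|q\|^2$, so this portion is $\lesssim h\,b(\lambda)\|q\|^2\lesssim h^{-1/2}\|q\|^2\le (\log h)\,h^{-1/2}\|q\|^2$, which after taking square roots accounts for the first term $(\log h)^{1/2}h^{-1/4}\|q\|$. For the large frequencies I would prove the uniform bound $s^2\Sigma_s\lesssim1$ for $|s|>4\sqrt{h}$, whence $\sum_{|s|>4\sqrt{h}}|q(s)|^2\,s^2\Sigma_s\lesssim(\mathcal{E}_{4\sqrt{h}}(q))^2$, the second term. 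Adding the two portions and taking square roots yields (\ref{BR}).

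The crux is the claim $s^2\Sigma_s\lesssim1$ for $|s|>4\sqrt{h}$, and this is where the hypothesis $|y|\ge h$ is decisive. Splitting $m$ according to whether $|2m+s|\ge|s|$ or $m\in(-s,0)$ (the case $s<0$ being symmetric): for the first set one has $|\lambda-m^2|+|\lambda-(m+s)^2|\ge|s|\,|2m+s|\ge s^2$, so, exactly as in (\ref{4.44}), each term is $\le|\lambda-m^2|^{-1}+|\lambda-(m+s)^2|^{-1}$ and the subsum is $\le2a(\lambda)\lesssim1$. The remaining ``resonant'' range $m\in(-s,0)$ is the genuine obstacle: there $|2m+s|$ can vanish and the gap estimate is useless, but both denominators are $\ge|y|\ge h$, and they can be simultaneously of size $|y|$ only when $x\approx s^2/4$, which because $|x|\le N^2+N$ confines resonance to $|s|\lesssim N\le h$. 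Bounding the resonant subsum by $s^2\,b(\lambda)$ and inserting the sharper center-dependent estimate $b(\lambda)\lesssim(|s|\,|y|)^{-1}$ valid precisely when $x\approx s^2/4$, one obtains $\lesssim|s|/|y|\lesssim N/h\lesssim1$. Making this resonant count fully rigorous --- tracking how many $m\in(-s,0)$ keep both $|\lambda-m^2|$ and $|\lambda-(m+s)^2|$ comparable to $|y|$, and disposing of the non-resonant tail of that range --- is the step requiring the most care; everything else reduces to the dyadic lattice-point counts above.
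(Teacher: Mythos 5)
The paper gives no internal proof of Lemma~\ref{lemBR} --- it defers to Lemma 20 of \cite{DM16} --- so your argument can only be judged on its own terms. Your global strategy (dominating $\|K_\lambda VK_\lambda\|_{HS}^2$ by $\psi(\lambda)=\sum_s s^2|q(s)|^2\Sigma_s$ via (\ref{Z}), proving the uniform lattice bounds $a(\lambda)\lesssim 1$ and $b(\lambda)\lesssim h^{-3/2}$ for $|Re\,\lambda|\le N^2+N$, $|Im\,\lambda|\ge h\ge N$, splitting the frequency sum at $|s|=4\sqrt h$, and treating the non-resonant $m$ exactly as in (\ref{4.44})) is sound; the dyadic counting behind $a$ and $b$ is correct, and the small-frequency portion is complete and in fact yields $h^{-1/4}\|q\|$ without the logarithm.

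The gap is in the resonant range $m\in(-s,0)$ for $4\sqrt h<|s|\lesssim N$, and the patch you sketch there does not close it. You bound the resonant subsum by $s^2b(\lambda)$ and invoke $b(\lambda)\lesssim(|s|\,|y|)^{-1}$, but that refinement of $b$ holds only when $\sqrt{x}\sim|s|$, whereas the subsum over $m\in(-s,0)$ must be controlled for \emph{every} admissible $x$, not only for $x\approx s^2/4$. If, say, $x$ is near $0$, $|y|=h=N$ and $|s|\sim N$, then $b(\lambda)\sim|y|^{-3/2}$ and $s^2b(\lambda)\sim\sqrt N$, so the route through $s^2b(\lambda)$ fails --- even though the subsum itself is $O(1)$ there, because one of the two denominators is then $\gtrsim s^2$ and the sum collapses to $a(\lambda)$. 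What is needed is a further case split on $\xi=\sqrt{x}$ versus $|s|$: for $x<0$ or $\xi\le|s|/4$, at least one factor satisfies $|x-n^2|\ge s^2/8$ on half of the range (and symmetrically on the other half), so the subsum is $\lesssim a(\lambda)\lesssim1$; for $\xi>|s|/4$ one has $|x-n^2|\gtrsim|s|\,|n-\xi|$ and $|x-(s-n)^2|\gtrsim|s|\,|n-(s-\xi)|$, and a Cauchy--Schwarz in the shifted index gives a bound by $\sum_u s^2\bigl(|s|\,|u|+|y|\bigr)^{-2}\lesssim s^2|y|^{-2}+|s|\,|y|^{-1}\lesssim1$, using precisely $|s|\le4N\le4|y|$; finally $|s|>4N$ is covered by the gap argument of Lemma~\ref{lemBE}. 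With that insertion (which you partially anticipate by flagging the resonant count as the delicate step) the proof is complete.
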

We omit the proof because it is the same as the proof of Lemma 20 in
\cite{DM16}.
\bigskip

5. We estimate the norm of the operator $B_N $  by using
Lemmas~\ref{lemBE}, \ref{lemBR} and Lemmas~\ref{lem405} and
\ref{lemA} from Appendix. Let $v=Q^\prime,$  and let $q=(q(2k))$ and
$\tilde{q}=(\tilde{q}(m))$ be, respectively, the sequences of
Fourier coefficients of $Q$ about the o.n.b. $\{e^{i2kx}, k \in
\mathbb{Z}\}$ and $\{\sqrt{2}\sin mx, \, m \in \mathbb{N} \}.$
\begin{Proposition}
\label{propBN}

(a) If  $bc=Per^\pm, $ then
\begin{equation}
\label{ec24a} \|B_N \|_{L^1 \to L^\infty} \lesssim
 \left (\frac{\|q\|^2}{N} +
(\mathcal{E}_{\sqrt{N}} (q))^2  \right ) (\log N)^2 +
\int_{N^2}^\infty  \frac{1}{t}  (\mathcal{E}_t (q))^2 dt.
\end{equation}
If $bc=Dir,$ then (\ref{ec24a}) holds with $q$ replaced by
$\tilde{q}.$ \vspace{2mm}

(b)  Suppose $\Omega (t), \; t \in \mathbb{R},$ is a real function
which is even, unbounded and increasing for $x>0.$  If $q \in \ell^2
(\Omega)$  and $bc=Per^\pm,$ then
\begin{equation}
\label{ec24b} \|B_N \|_{L^1 \to L^\infty} \lesssim
 \left (\frac{\|q\|^2}{N} +
\frac{\|q\|^2_{\ell^2 (\Omega)}}{\Omega^2 (\sqrt{N})}
 \right ) (\log N)^2 +
\int_{N^2}^\infty
\frac{\|q\|^2_{\ell^2 (\Omega)}}{t \, \Omega^2 (t)}dt.
\end{equation}
If $bc= Dir $ and $\tilde{q} \in \ell^2 (\Omega),$   then
(\ref{ec24b}) holds with $q$ replaced by $\tilde{q}.$ \vspace{2mm}

(c) If $bc= Per^\pm$ and $q \in \ell^2 (\Omega) $ or $bc=Dir$ and
$\tilde{q} \in \ell^2 (\Omega), $ where $\Omega (k) =
(1+k^2)^{\delta/2}, \; \delta \in (0,1),$ then
\begin{equation}
\label{ecSob} \|B_N \|_{L^1 \to L^\infty} \lesssim  N^{-\delta} (\log
N)^2.
\end{equation}
If $ \, \Omega (k) = (\log (e+k))^{\beta}, \; \beta > 1,  \; $ and
respectively, $bc=Per^\pm $ and $ q \in \ell^2 (\Omega),$ or $bc=Dir$
and $\tilde{q} \in \ell^2 (\Omega), $ then
\begin{equation}
\label{ecLog} \|B_N \|_{L^1 \to L^\infty} \lesssim  (\log
N)^{2-2\beta} \to 0 \quad \text{as} \;\; N \to \infty.
\end{equation}
\end{Proposition}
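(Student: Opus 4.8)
The plan is to estimate $\|B_N\|_{L^1\to L^\infty}$ by first reducing the contour integral (\ref{ec22}) to the vertical line $\Lambda_N$ and then bounding the $L^1\to\mathcal C$ norm of the integrand through the factorization supplied by the $K$-decomposition (\ref{4.23})--(\ref{4.25}). As in (\ref{3.012}) (using Lemmas~\ref{lem405} and \ref{lem407} to send the horizontal and left vertical sides of $\partial\Pi_N$ to infinity), I would write $B_N=\frac{1}{2\pi i}\int_{\Lambda_N}\sum_{m=2}^\infty K_\lambda(K_\lambda V K_\lambda)^m K_\lambda\,dy$, since $R^0_\lambda(VR^0_\lambda)^m=K_\lambda(K_\lambda V K_\lambda)^m K_\lambda$. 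Factoring each summand as $L^1\xrightarrow{\mathcal F}\ell^\infty\xrightarrow{K_\lambda}\ell^2\xrightarrow{(K_\lambda V K_\lambda)^m}\ell^2\xrightarrow{K_\lambda}\ell^1\xrightarrow{\mathcal F^{-1}}\mathcal C$ and using (\ref{3.07}), the Cauchy--Schwarz bounds $\|K_\lambda\|_{\ell^\infty\to\ell^2},\,\|K_\lambda\|_{\ell^2\to\ell^1}\le\sqrt{a_N(y)}$ (with $a_N$ from (\ref{4.35})), and the fact that the Hilbert--Schmidt norm dominates the $\ell^2$-operator norm, each summand has $L^1\to\mathcal C$ norm $\lesssim a_N(y)\,\|K_\lambda V K_\lambda\|_{HS}^m$. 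Summing the geometric series over $m\ge2$ -- legitimate because Lemma~\ref{lemBR} makes $\|K_\lambda V K_\lambda\|_{HS}<1/2$ for $|Im\,\lambda|$ large and the same holds on all of $\Lambda_N$ once $N$ is large -- and writing $\psi_N(y)$ for the bound $\|K_\lambda V K_\lambda\|_{HS}^2\le\psi_N(y)$ from (\ref{Z}) and (\ref{4.33}), reduces the whole problem to
\[
\|B_N\|_{L^1\to L^\infty}\lesssim\int_{\Lambda_N}a_N(y)\,\psi_N(y)\,dy .
\]

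Next I would insert the bound (\ref{4.34}) of Lemma~\ref{lemBE}, which splits $\psi_N$ into a $b_N$-part with coefficient $N^2\big(\|q\|^2/N+16(\mathcal E_{\sqrt N}(q))^2\big)$ and an $a_N$-part with coefficient $16(\mathcal E_{4N}(q))^2$. For the $b_N$-part I would use the uniform bound $a_N(y)\lesssim(\log N)/N$ of Lemma~\ref{lemA} together with the Appendix estimate $\int_{\Lambda_N}b_N(y)\,dy\lesssim(\log N)/N$ (the $r=2$ case of Corollary~\ref{cor17A}, since $b_N=A^2(z;2)$ in the notation of (\ref{2.13})), so that $N^2\int_{\Lambda_N}a_N b_N\,dy\lesssim(\log N)^2$; this produces exactly the first term of (\ref{ec24a}).

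The main obstacle is the $a_N$-part, because $\int_{\Lambda_N}a_N^2\,dy=\infty$ (see (\ref{111})), so one cannot simply integrate $(\mathcal E_{4N}(q))^2a_N^2$. I would resolve this by splitting $\Lambda_N$ at $|y|=N^8$. On $|y|\le N^8$ the bounds $a_N\lesssim(\log N)/N$ (Lemma~\ref{lemA}) on $|y|\le N^2$ and $a_N\lesssim|y|^{-1/2}$ on $|y|>N^2$ give $\int_{|y|\le N^8}a_N^2\,dy\lesssim(\log N)^2$, and since $\mathcal E_{4N}(q)\le\mathcal E_{\sqrt N}(q)$ this contribution is absorbed into the first term of (\ref{ec24a}). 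On $|y|>N^8$ I would use the refined bound (\ref{4.36}), whose $a_N$-coefficient is $\|q\|^2/\sqrt{|y|}+16(\mathcal E_{|y|^{1/4}}(q))^2$; multiplying by $a_N^2\lesssim1/|y|$ and integrating, the $\|q\|^2/\sqrt{|y|}$ term contributes $O(N^{-4})$, while the substitution $t=|y|^{1/4}$ turns $\int_{|y|>N^8}(\mathcal E_{|y|^{1/4}}(q))^2\,\frac{dy}{|y|}$ into a constant multiple of $\int_{N^2}^\infty(\mathcal E_t(q))^2\,\frac{dt}{t}$, which is precisely the integral term of (\ref{ec24a}). This proves (a) for $bc=Per^\pm$; by (\ref{Z}) the Dirichlet case is identical with $q$ replaced by $\tilde q$.

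Finally, (b) follows from (a) by monotonicity of $\Omega$: for $|k|\ge M$ one has $|q(k)|^2\le|q(k)|^2\Omega^2(k)/\Omega^2(M)$, hence $(\mathcal E_M(q))^2\le\|q\|^2_{\ell^2(\Omega)}/\Omega^2(M)$, and substituting this for both $(\mathcal E_{\sqrt N}(q))^2$ and $(\mathcal E_t(q))^2$ in (\ref{ec24a}) yields (\ref{ec24b}). Part (c) is then a direct computation: for $\Omega(k)=(1+k^2)^{\delta/2}$ the first term of (\ref{ec24b}) is $\lesssim N^{-\delta}(\log N)^2$ (as $N^{-\delta}$ dominates $N^{-1}$ for $\delta<1$) and $\int_{N^2}^\infty t^{-1-2\delta}\,dt\lesssim N^{-4\delta}$ is negligible, giving (\ref{ecSob}); for $\Omega(k)=(\log(e+k))^\beta$ with $\beta>1$ the first term is $\lesssim(\log N)^{2-2\beta}$ and the integral is $\lesssim(\log N)^{1-2\beta}$, so the former dominates and (\ref{ecLog}) follows. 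I expect the genuinely delicate point to be the large-$|y|$ analysis of the $a_N$-part, where the refinement (\ref{4.36}) of Lemma~\ref{lemBE} is indispensable both for convergence and for producing the tail term $\int_{N^2}^\infty(\mathcal E_t(q))^2\,dt/t$.
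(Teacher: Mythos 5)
Your proposal is correct and follows essentially the same route as the paper's proof: the $K_\lambda$-factorization giving $\|R^0_\lambda(VR^0_\lambda)^m\|_{L^1\to L^\infty}\le a_N(y)\,\|K_\lambda VK_\lambda\|_{HS}^m$, the bounds (\ref{4.34}) and (\ref{4.36}) of Lemma~\ref{lemBE} with the split of $\Lambda_N$ at $|y|=N^8$, the substitution $t=|y|^{1/4}$ producing the tail integral, and the reduction (\ref{eclog}) for parts (b) and (c). The only (harmless) deviation is that you pass at once to the single vertical line $\Lambda_N$ by citing Lemmas~\ref{lem405} and \ref{lem407}, whereas for singular potentials the paper's proof keeps both lines $\Lambda_N\cup\Lambda_N^-$ and bounds the left integral by the right one via (\ref{4.30}), deferring the single-line representation to Remark~\ref{rem107}, where it is justified by a dominated-convergence argument rather than by those two lemmas alone.
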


\begin{proof}
Recall by (\ref{ec22}) and (\ref{ec001}) that
\begin{equation}
\label{ec22*} B_N  = \frac{1}{2 \pi i} \int_{\partial \Pi_N}
\sum_{m=2}^\infty R^0_\lambda (VR^0_\lambda)^m d\lambda,
\end{equation}
where $\; \Pi_N = \{\lambda = x+iy: \; -\omega \leq x \leq N^2 +N, \;
\; |y|\leq h \}.$
 In view of (\ref{4.24}), $R^0_\lambda (VR^0_\lambda)^m
=K_\lambda (K_\lambda VK_\lambda)^m K_\lambda,$ so we have
\begin{equation}
\label{n0} \|R^0_\lambda (VR^0_\lambda)^m \|_{L^1 \to L^\infty} \leq
\|K_\lambda\|_{L^1 \to L^2} \|K_\lambda VK_\lambda\|^m_{L^2 \to L^2}
\|K_\lambda\|_{L^2 \to L^\infty}.
\end{equation}
By (\ref{4.25}) and (\ref{a2}), it follows that
\begin{equation}
\label{n1}
 \|K_\lambda\|^2_{L^1 \to L^2} =\|K_\lambda\|^2_{L^2 \to L^\infty}=
 \sum_k \frac{1}{|\lambda -k^2|} = A(\lambda,1).
\end{equation}
Since the Hilbert-Schmidt norm dominates the $L^2 $-norm, by
(\ref{n0}) and (\ref{n1})  the $\|\cdot\|_{L^1 \to L^\infty}$ norm of
the integrand in (\ref{ec22*}) does not exceed
\begin{equation}
\label{n2}
 \sum_{m=2}^\infty \|R^0_\lambda (VR^0_\lambda)^m \|_{L^1 \to
L^\infty} \leq  S (\lambda),
\end{equation}
where
\begin{equation}
\label{n3} S (\lambda):= A(\lambda, 1) \sum_{m=2}^\infty \|K_\lambda
VK_\lambda\|^m_{HS}.
\end{equation}

The integral in (\ref{ec22*}) does not depend on the choice of the
parameters  $\omega> \omega_0, \; h>h_0 $ in (\ref{ec001}) because
the integrand depends analytically on $\lambda = x+it $ if $x< -
\omega_0, \, |t| > h_0. $ Lemma~\ref{lemBR} implies that if $|Im \,
\lambda|=h$ then $\|K_\lambda V K_\lambda \|_{HS} \leq 1/2 $ for
large enough $h.$ Therefore, in view of (\ref{n2}), (\ref{n3}) and
Lemma~\ref{lem405} (Appendix, formula (\ref{a17}) with $r = 1$),  if
 $N$ is large enough then on the horizontal sides of the rectangle $\Pi_N$
 the norm of the integrand in (\ref{ec22*}) does not exceed
$$
S(\lambda) \leq  A(\lambda, 1) \lesssim  h^{-1/2}, \quad |Im\,
\lambda|=h \geq N^2.
$$
Let $\Lambda_N $ and $\Lambda_N^- $ be the vertical lines
$$
\Lambda_N =  \{\lambda=N^2 +N + iy: \;  y \in \mathbb{R}\}, \quad
\Lambda^-_N =  \{\lambda=- (N^2 +N) + iy: \;   y \in \mathbb{R}\}.
$$
Now, taking $\omega= N^2 +N $ and letting $h \to \infty $ we obtain
(since the integrals on horizontal segments go to zero)  that
\begin{equation}
\label{e1}
B_N  = \frac{1}{2 \pi i} \int_{\Lambda_N} \sum_{m=2}^\infty
R^0_\lambda (VR^0_\lambda)^m d\lambda - \frac{1}{2 \pi i}
\int_{\Lambda^-_N}
 \sum_{m=2}^\infty R^0_\lambda (VR^0_\lambda)^m
d\lambda,
\end{equation}
provided both integrals in (\ref{e1}) converge. Therefore, from
(\ref{n2}) and (\ref{n3}) it follows that
$$ \|B_N \|_{L^1 \to L^\infty} \leq \int_{\Lambda_N \cup \Lambda_N^-}
S(\lambda) dy, \quad \lambda = \pm (N^2+N) =iy.$$

In view of (\ref{4.30}) or (\ref{4.30a}), one can easily see that
$\int_{\Lambda_N^-} S(\lambda)dy \leq \int_{\Lambda_N} S(\lambda)dy$
because $ S(-(N^2+N)+iy) \leq S(N^2+N+iy), $ which implies that
\begin{equation}
\label{72a} \|B_N \|_{L^1 \to L^\infty} \leq 2 \int_{\Lambda_N }
S(\lambda) dy.
\end{equation}
Moreover, for large enough $N$ we have
\begin{equation}
\label{two} \|K_\lambda VK_\lambda\|_{HS} < 1/2  \quad \text{for}
\quad \lambda \in \Lambda_N.
\end{equation}
Indeed, in view of (\ref{Z}) and (\ref{4.34}) in Lemma \ref{lemBE},
we have
$$
\|K_\lambda VK_\lambda\|_{HS} \leq \psi_N (y), \quad \lambda = N^2 +N
+ iy \in \Lambda_N,
$$
with
$$
\psi_N (y) \leq  N^2 \left (  \frac{\|q\|^2}{N}  + 16
(\mathcal{E}_{\sqrt{N}}(q))^2 \right ) b_N (y) + 16
(\mathcal{E}_{4N}(q))^2 a_N (y),
$$
where $a_N (y) $ and $b_N (y)$ are given by (\ref{4.35}). By Lemma
\ref{lemA}, we have that
\begin{equation}
\label{eca} a_N (y) \lesssim \begin{cases}  \frac{\log N}{N}    &
\text{if} \quad   |y| \leq N;\\ \frac{1}{N} \log
(1+\frac{N^2}{|y|}) & \text{if} \quad   N \leq |y| \leq N^2;\\
\frac{1}{\sqrt{|y|}}   & \text{if} \quad  |y| \geq N^2;
\end{cases}
\end{equation}
\begin{equation}
\label{ecb} b_N (y) \lesssim \begin{cases}  \frac{1}{N^2}    &
\text{if} \quad   |y| \leq N;\\ \frac{1}{N|y|}    &  \text{if} \quad
N \leq |y| \leq N^2;\\ \frac{1}{|y|^{3/2}}   & \text{if} \quad  |y|
\geq N^2.
\end{cases}
\end{equation}
Since $\mathcal{E}_{\sqrt{N}}(q) \to 0 $ as $N\to 0, $  by
(\ref{eca}) and (\ref{ecb}) one can easily that $\sup \{\psi_N (y):
y\in \mathbb{R} \} \to 0  $ as $N\to 0, $ which proves (\ref{two}).

From (\ref{two}) it follows that $ \sum_{m=2}^\infty \|K_\lambda
VK_\lambda\|^m_{HS} \leq  \|K_\lambda VK_\lambda\|^2_{HS}$ if $
\lambda \in \Lambda_N.$
 Thus, by (\ref{Z}), (\ref{4.33}) and (\ref{4.35}), we
obtain
\begin{equation}
\label{72b} S(\lambda) \leq a_N (y) \psi_N (y)  \quad \text{for}
\quad \lambda =N^2 +N+iy \in \Lambda_N.
\end{equation}
In view of (\ref{72a}) and (\ref{72b}),
\begin{equation}
\label{ecBN} \|B_N \|_{L^1 \to L^\infty} \leq 2\int_{\mathbb{R}}
a_N (y) \psi_N (y) dy = 2(I_1  +I_2 +I_3 +I_4),
\end{equation}
 where $$ I_1 =
\int_{|y|\leq N}\cdots, \; I_2 = \int_{N<|y|\leq N^2}\cdots, \, I_3 =
\int_{N^2 <|y|\leq N^8}\cdots, \; I_4 = \int_{|y|> N^8}\cdots. $$

 Now we estimate $I_1. $ In view of (\ref{eca}) and
 (\ref{ecb}), if $|y| \leq N $ then $a_N (y) \leq \frac{\log N}{N},  $
and $b_N (y) \leq  \frac{1}{N^2}. $  Therefore, by (\ref{4.34}),
 $$
a_N (y) \psi_N (y) \lesssim \frac{\log N}{N} \left (
\frac{\|q\|^2}{N} +
(\mathcal{E}_{\sqrt{N}} (q))^2  \right )+ (\mathcal{E}_{4N} (q))^2
 \frac{(\log N)^2}{N^2}
 $$
 $$
\lesssim \frac{\log N}{N}
\left (\frac{\|q\|^2}{N} + (\mathcal{E}_{\sqrt{N}} (q))^2  \right ). $$
Therefore, we obtain
\begin{equation}
\label{I1}
 I_1 \leq 2N \cdot \max_{|y| \leq N} a_N (y) \psi_N (y)
\lesssim   \left (\frac{\|q\|^2}{N} +
(\mathcal{E}_{\sqrt{N}} (q))^2  \right ) \log N.
\end{equation}

 Next we estimate $I_2. $ In view of (\ref{eca}) and
 (\ref{ecb}), if $N \leq |y| \leq N^2 $ then $a_N (y) \leq \frac{1}{N}
 \log (1+\frac{N^2}{|y|}),  $
and $b_N (y) \leq  \frac{1}{N|y|}. $  Therefore,  (\ref{4.34}) implies
$$
a_N (y) \psi_N (y) \lesssim
\frac{1}{|y|} \log  \left (1+\frac{N^2}{|y|} \right )
\left (\frac{\|q\|^2}{N} + (\mathcal{E}_{\sqrt{N}} (q))^2  \right )
$$
$$
+ (\mathcal{E}_{4N} (q))^2 \left (\frac{1}{N}
\log \left (1+\frac{N^2}{|y|} \right ) \right )^2
\lesssim \frac{1}{|y|} \log  \left (1+\frac{N^2}{|y|} \right )
\left (\frac{\|q\|^2}{N} + (\mathcal{E}_{\sqrt{N}} (q))^2  \right )
 $$
because $\frac{1}{|y|} \geq \frac{1}{N^2} \log \left
(1+\frac{N^2}{|y|} \right ).$ Now, since $$ \int_N^{N^2} \frac{1}{y}
\log (1+N^2/y) dy \lesssim (\log N) \int_N^{N^2} \frac{1}{y} dy
\lesssim (\log N)^2, $$
 it follows that
\begin{equation}
\label{I2}
 I_2
\lesssim   \left (\frac{\|q\|^2}{N} +
(\mathcal{E}_{\sqrt{N}} (q))^2  \right ) (\log N)^2.
\end{equation}

If $N^2 \leq |y| \leq N^8, $ then by (\ref{4.34}),
(\ref{eca}) and (\ref{ecb})
 it follows that
 $$ a_N (y) \psi_N (y)
\lesssim
\frac{N^2}{y^2} \left (\frac{\|q\|^2}{N} +
(\mathcal{E}_{\sqrt{N}} (q))^2  \right ) +
 \frac{1}{|y|} (\mathcal{E}_{4N} (q))^2. $$

 So, taking into account that $$
\int_{N^2}^{N^8} \frac{1}{y^2} dy < \frac{1}{N^2}, \quad
\int_{N^2}^{N^8} \frac{1}{y} dy = 6 \log N,  $$ we obtain
\begin{equation}
\label{I3}
 I_3 \lesssim  \left (\frac{\|q\|^2}{N} +
(\mathcal{E}_{\sqrt{N}} (q))^2  \right ) \log N.
\end{equation}

To estimate $I_4, $ we  use that  the estimates (\ref{4.36}),
(\ref{eca}) and (\ref{ecb}) imply, for $|y|>N^8,$ that
 $$ a_N (y) \psi_N (y)
\lesssim \frac{N^2}{y^2}  \left (\frac{\|q\|^2}{N} +
(\mathcal{E}_{\sqrt{N}} (q))^2  \right )+
 \frac{1}{|y|} \left (\frac{\|q\|^2}{\sqrt{y}} +
(\mathcal{E}_{|y|^{1/4}} (q))^2  \right ).
$$
Since $\int_{N^8}^{\infty} \frac{1}{y^2} dy = \frac{1}{N^8} $ and
$\int_{N^8}^{\infty} \frac{1}{y^{3/2}} dy  =\frac{2}{N^4}, $
it follows that
\begin{equation}
\label{I4}
 I_4 \lesssim
 \frac{\|q\|^2}{N^4} +\frac{(\mathcal{E}_{\sqrt{N}} (q))^2 }{N^4}+
 \int_{N^8}^\infty \frac{1}{y} (\mathcal{E}_{y^{1/4}} (q))^2 dy.
\end{equation}

In view of (\ref{ecBN}) and (\ref{I1})--(\ref{I4}),
we obtain that (\ref{ec24a}) holds,
which completes the proof (a).

To prove (b), we use that
$$ (\mathcal{E}_M (q))^2 =
\sum_{|s|\geq M} |q(s)|^2 \leq
\frac{1}{(\Omega (M))^2} \sum_{|s|\geq M}
|q(s)|^2 (\Omega (s))^2 \leq  \frac{\|q\|^2_{\Omega}}{(\Omega
(M))^2}, $$ so
\begin{equation}
\label{eclog} \mathcal{E}_{M} (q) \leq
\frac{1}{\Omega (M)} \mathcal{E}_M^{\Omega} (q)
\leq
 \frac{\|q\|_{\Omega}}{\Omega (M)},
 \end{equation}
where
\begin{equation}
\label{ecw} \left (\mathcal{E}_M^{\Omega} (q) \right)^2 =
\sum_{|s|\geq M} |q(s)|^2 (\Omega (s))^2.
\end{equation}
Now (\ref{ec24b}) follows from (\ref{ec24a}) and (\ref{eclog}), which
proves (b). Finally, one can easily see that (c) follows from (b).
\end{proof}
\bigskip

In the proofs of Propositions~\ref{propBN} and
Proposition~\ref{propab} in Section~5, we use Formula (\ref{e1})
(where $B_N$ is written as a difference of two integrals over the
lines $\Lambda_N $ and $\Lambda_N^-$). This representation of $B_N$
is good enough for our proofs.

However, in the context of $L^1$-potentials (see Section 3, Formula
(\ref{3.012})), it is explained (by using simple estimates from
Appendix, Lemmas~\ref{lem405} and \ref{lem407})) that the operator
$B_N$ equals only the integral over $\Lambda_N. $ For singular
potentials, it is more difficult to show that in Formula (\ref{e1})
the integral $\int_{\Lambda^-_N} \sum_{m=2}^\infty R^0_\lambda
(VR^0_\lambda)^m d\lambda =0, $ but it could be done by using
estimates from the proofs of Propositions~\ref{lemBE} and
\ref{propab} . More precisely, the following holds.
\begin{Remark}
\label{rem107} (a) If $v \in H^{-\alpha}, \; \alpha \in (0,1), $ then
\begin{equation}
\label{ee1} B_N  = \frac{1}{2 \pi i} \int_{\Lambda_N}
\sum_{m=2}^\infty R^0_\lambda (VR^0_\lambda)^m d\lambda, \quad N >
N_*.
\end{equation}
where the integral converges in the operator norm $\|\cdot\|_{L^1 \to
L^\infty}.$

(b) If $v \in H^{-1} $ and we consider $B_N $ as an operator from
$L^a $ to $L^b, $  where $1 \leq a < 2 < b \leq \infty $ and
$\frac{1}{a}- \frac{1}{b} <1, $ then (\ref{ee1}) holds and the
integral there converges in the operator norm $\|\cdot\|_{L^a \to
L^b}. $
\end{Remark}

\begin{proof}
For potentials $v \in H^{-\alpha}, \; \alpha \in (0,1), $ Formula
(\ref{e1}) make sense because $\int_{\Lambda_N} S(\lambda) dy $
converge -- see  (\ref{n2}), (\ref{n3}), (\ref{72a}) and the
estimates that follow.
 Using the same argument that leads to Formula
(\ref{e1}) but with $\omega = M^2+M, \; M\in \mathbb{N}, \; M\geq N,$
we obtain
$$
B_N=\frac{1}{2 \pi i} \int_{\Lambda_N} \sum_{m=2}^\infty R^0_\lambda
(VR^0_\lambda)^m d\lambda -\frac{1}{2 \pi i} \int_{\Lambda_M^-}
\sum_{m=2}^\infty R^0_\lambda (VR^0_\lambda)^m d\lambda
$$
Therefore, in view of (\ref{n2}) and (\ref{n3}), we will prove
(\ref{ee1}) if we show that $ \int_{\Lambda_M^-} S(\lambda) dy \to 0
\quad \text{as} \;\; M \to \infty. $ This follows from the Lebesgue
Dominated Convergence Theorem, since by Lemma~\ref{lem407}, Formula
(\ref{a19}),
$$ S(M^2+M+iy) \lesssim A(M^2+M+iy,1) \lesssim (M^2 +y)^{-1/2} \to
0  \;\;  \text{as}  \;\; M \to 0,  $$ and $S(M^2+M+iy) \leq g(y),$
where $g(y) = S(N^2+N+iy) $ is integrable because $\int_{\Lambda_N}
S(\lambda) dy < \infty.$

The proof of (b) is exactly the same, but it is based on inequalities
from the proof of Proposition~\ref{propab}. Therefore, we omit the
details.
\end{proof}

6. Next we estimate the norms of the operator $T_N.$
\begin{Proposition}
\label{propAN}  (a) If $bc=Per^\pm, $ then
\begin{equation}
\label{e24} \|T_N \|_{L^2 \to L^\infty} \lesssim \|q\| \sqrt{H/N} +
\mathcal{E}_H (q) + \mathcal{E}_N (q) (\log N)^{1/2}, \quad 0 < H <
\frac{N}{2}.
\end{equation}
If $bc=Dir,$  then (\ref{e24}) holds with $q$ replaced by
$\tilde{q}.$ \vspace{2mm}

(b) If  $bc=Per^\pm $ and $q \in \ell^2 (\Omega),$ where $\; \Omega
(k) = (1+ |k|^2)^{\delta/2}\;$ with $ \delta \in (0,1), $ and $1\leq
a < 2, $ $a \delta <1,\;$    then
\begin{equation}
\label{e25} \|T_N \|_{L^a \to L^\infty} \lesssim \|q\|
\frac{H^{\frac{1}{a}}}{N^{1/2}} + \mathcal{E}^{\Omega}_{H} (q)
N^{\frac{1}{a}-\delta -1/2}.
\end{equation}
If $bc=Dir$ and $\tilde{q} \in \ell^2 (\Omega),$  then (\ref{e25})
holds with $q$ replaced by $\tilde{q}.$ \vspace{2mm}

(c) If $\Omega (k) = (\log (e+k))^{\beta}, \; \beta \geq 1/2,$  $q
\in \ell^2 (\Omega)$ and $ bc=Per^\pm, $ then
\begin{equation}
\label{e26} \|T_N \|_{L^2 \to L^\infty}  \lesssim \|q\| N^{-1/4} +
(\mathcal{E}^{\Omega}_{\sqrt{N}} (q)) \cdot (\log N)^{1/2-\beta}.
\end{equation}
If $bc=Dir$ and $\tilde{q} \in \ell^2 (\Omega),$  then (\ref{e26})
holds with $q$ replaced by $\tilde{q}.$
\end{Proposition}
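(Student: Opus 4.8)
The plan is to estimate all three norms through a single Wiener-type bound for $T_N$, in the spirit of the proof of Theorem~\ref{Thm3.2}. Writing $f=\sum_k f_k u_k$ and using $|u_m(x)|\le\sqrt2$ (see (\ref{0.051})), I would first pass to
\[
\|T_Nf\|_\infty\le\sqrt2\sum_{(m,k)\in X(N)}|T_N(m,k)|\,|f_k|,
\]
so that everything reduces to the matrix representations (\ref{3.21a}) and (\ref{3.21d}) of Lemmas~\ref{lin} and \ref{linD}. The decisive simplification is the cancellation of the factor $|m-k|$: since $V(m-k)=i(m-k)q(m-k)$, for $bc=Per^\pm$ one has $|T_N(m,k)|=|q(m-k)|/|m+k|$, while for $bc=Dir$ the entry (\ref{3.21d}) splits into a \emph{high-frequency} part $|\tilde q(m+k)|/(\sqrt2\,|m-k|)$ and a \emph{low-frequency} part $|\tilde q(|m-k|)|/(\sqrt2\,(m+k))$. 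Using the unified sequence $q\in\ell^2(\mathbb Z)$ of (\ref{Z}) (with $q(k)=0$ on odd $k$ for $Per^\pm$ and $q(k)=\tilde q(|k|)$ for $Dir$), all three cases reduce to summing kernels of the form $|q(s)|/|\text{index sum}|$ against $|f_k|$ over $X(N)$.

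For part (a) the coefficients obey $\sum_k|f_k|^2=\|f\|_2^2$, and I would split $X(N)$ by the size of the frequency $s=m-k$ (respectively $m+k$ in the Dirichlet high-frequency term), so that the three summands of (\ref{e24}) arise one by one. On the near-diagonal zone $|s|\le H$ the constraint $(m,k)\in X(N)$ forces $m,k$ to have equal sign and to lie within distance $H<N/2$ of $N$, whence $|m+k|\sim2N$; estimating $\sum_{N-s<k\le N}|f_k|\le\sqrt s\,\|f\|_2$ over an interval of length $\sim s$ and then applying Cauchy--Schwarz in $s$ against $\sum_{|s|\le H}s\lesssim H^2$ produces the term $\|q\|\sqrt{H/N}$. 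On the zone $|s|>H$ with $m,k$ of equal sign the kernel is the Hilbert kernel $1/(2k+s)$, whose $\ell^2\to\ell^2$ boundedness (the discrete Hilbert transform of (\ref{HT}), already used in Proposition~\ref{propeq}) gives $\sum_{s}\bigl(\sum_k|f_k|/(2k+s)\bigr)^2\lesssim\|f\|_2^2$ and hence, after Cauchy--Schwarz in $s$, the term $\mathcal{E}_H(q)$. Finally, when $m,k$ have opposite signs one necessarily has $|s|=|m|+|k|>N$, so only $\mathcal{E}_N(q)$ enters; here the kernel is $1/|\,|m|-|k|\,|$ and summing $\sum_{\kappa<N}1/(N-\kappa)\lesssim\log N$ yields precisely the factor $(\log N)^{1/2}$, i.e.\ the term $\mathcal{E}_N(q)(\log N)^{1/2}$.

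For parts (b) and (c) I would run the same three-zone decomposition, replacing Parseval by the Hausdorff--Young inequality: from $f\in L^a$, $1\le a<2$, one gets $(f_k)\in\ell^{a'}$ with $\|(f_k)\|_{a'}\lesssim\|f\|_a$, and the interval estimate becomes $\sum_{k\in I}|f_k|\le|I|^{1/a}\|(f_k)\|_{a'}$. The near-diagonal zone $|s|\le H$ then yields the first term $\|q\|H^{1/a}N^{-1/2}$ of (\ref{e25}), while in the high-frequency and opposite-sign zones the weighted tail $\mathcal{E}^\Omega_H(q)$ replaces $\mathcal{E}_H(q)$ through $\mathcal{E}_M(q)\le\mathcal{E}^\Omega_M(q)/\Omega(M)$, and the Hilbert-type kernel summed against an $\ell^{a'}$ sequence produces the power $N^{1/a-\delta-1/2}$ in place of the logarithm; the hypothesis $a\delta<1$ is exactly what makes the auxiliary series $\sum_{j<N}j^{-a/2}$ and the weighted kernel sums convergent with the stated exponent. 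Part (c) is the specialization $a=2$, $\Omega(k)=(\log(e+k))^\beta$, $H=\sqrt N$: the first term becomes $\|q\|N^{-1/4}$, and since $\Omega(\sqrt N)\sim(\log N)^\beta$ the factor $(\log N)^{1/2}$ from the opposite-sign zone is improved to $(\log N)^{1/2-\beta}$, giving (\ref{e26}).

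The main obstacle is the high-frequency / opposite-sign zone, where the kernel is a Hilbert kernel whose naive termwise summation diverges. The whole argument hinges on organizing this double sum so that the $\ell^2$-boundedness of the discrete Hilbert transform (or, in the weighted $L^a$ setting, a Hölder estimate with the correct summability exponent controlled by $a\delta<1$) can be applied, thereby converting a would-be divergence into exactly the logarithmic factor of (a) or the sharp power of (b). Tracking these exponents through the Hausdorff--Young and Hölder steps, and choosing the threshold $H$ to balance the resulting terms, is the remaining bookkeeping.
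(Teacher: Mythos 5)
Your overall architecture coincides with the paper's: both start from the matrix representation of $T_N$ obtained by integrating $R^0_\lambda VR^0_\lambda$ over $\partial\Pi_N$ (Lemmas~\ref{lin} and \ref{linD}), exploit the cancellation $|V(m-k)|/|m^2-k^2| = |q(m-k)|/|m+k|$, bound $\|T_Nf\|_\infty$ by the absolute sum of the matrix entries against $|f_k|$, and feed in Hausdorff--Young to control $(f_k)$ in $\ell^{a'}$. The difference is organizational: you sum over the frequency $s=m-k$ first and split into zones by $|s|$ and the signs of $m,k$, whereas the paper applies H\"older in $k$ first, reducing everything to the quantities $\sigma_1(a,N),\sigma_2(a,N)$ of (\ref{ec35}), and then runs Cauchy--Schwarz on the inner sum over $j$ (Lemmas~\ref{lemE}, \ref{lemL}, \ref{lemd}). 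Both orders can be made to work, and your near-diagonal and opposite-sign computations do reproduce the terms $\|q\|\sqrt{H/N}$ and $\mathcal{E}_N(q)(\log N)^{1/2}$ (for the latter you also need the telescoping bound $\sum_{M_1<s\le M_2}s^{-2}\lesssim M_1^{-1}-M_2^{-1}$ to handle $|s|>2N$, exactly as in the paper's estimate of $\sigma_{2,1}$).

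The genuine flaw is the step you yourself single out as the linchpin: the claim that the $\ell^2$-boundedness of the discrete Hilbert transform (\ref{HT}) yields $\sum_s\bigl(\sum_k|f_k|/(2k+s)\bigr)^2\lesssim\|f\|_2^2$. That inequality is false: once you put $|f_k|$ inside, the kernel is effectively $1/|2k+s|$ with no sign cancellation, and the positive-kernel operator is \emph{not} bounded on $\ell^2(\mathbb Z)$ (testing against the indicator of $\{1,\dots,M\}$ produces an extra $(\log M)^2$). What actually saves the high-frequency zone is not the Hilbert transform but the geometry of $X(N)$: for fixed $s$ the index $k$ ranges over an interval of length at most $\min(|s|,2N+1)$ adjacent to $\pm N$, on which $|m+k|\gtrsim\max(N,|s|-2N)$, so a plain Cauchy--Schwarz in $k$ already gives $\sum_k|f_k|/|m+k|\lesssim\|f\|_2\min\bigl(\sqrt{|s|}/N,\sqrt N/|s|\bigr)$, and a further Cauchy--Schwarz in $s$ produces $\mathcal{E}_H(q)$. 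This is precisely the mechanism in the paper's Lemma~\ref{lemE}, where $\sum_{j>N}(j+k)^{-2}\le(N+k)^{-1}$ does the work. So your conclusions are correct, but the Hilbert-transform appeal must be replaced by this elementary counting argument; as written that justification does not stand. The same caution applies to part (b), where the convergent series actually governed by the hypothesis $a\delta<1$ is $\sum_j j^{-a\delta}\lesssim N^{1-a\delta}$ rather than $\sum_j j^{-a/2}$.
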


\begin{proof}
Suppose $bc = Per^\pm $ and let $(u_k)$ be, respectively,  the
canonical orthonormal basis (\ref{0.011}) or (\ref{0.012}). In view
of (\ref{0010}), (\ref{4.7}) and (\ref{ec21}), if $f = \sum_k f_k u_k
$ is the expansion of $f \in L^2 ([0,\pi]),$ then (\ref{3.21}) gives
$$ T_N  f = \frac{1}{2\pi i}
\int_{\partial \Pi_N} \sum_k f_k \sum_j i
\,\frac{(j-k)q(j-k)}{(\lambda-j^2)(\lambda-k^2)}  u_j (x) d \lambda
= $$
$$ \sum_{|k|\leq N} f_k \sum_{|j|>N}
i \,\frac{(j-k)q(j-k)}{k^2-j^2} u_j (x) +\sum_{|k|> N} f_k
\sum_{|j|\leq N} i\,\frac{(j-k)q(j-k)}{j^2-k^2} u_j (x) $$ By
(\ref{0.011}) or (\ref{0.012}), $|u_j (x)| \leq 1.$ Therefore,  we
have
$$ \|T_N (f) \|_\infty \leq \sum_{|k|\leq N} |f_k| \sum_{|j|>N}
\frac{|q(j-k)|}{|j+k|}  +\sum_{|k|> N} |f_k| \sum_{|j|\leq N}
\frac{|q(j-k)|}{|j+k|}.$$ By the H\"older inequality, it follows that
$$\|T_N (f) \|_\infty  \leq \|(f_k)\|_{\ell^{\tilde{a}}}\,
 (\sigma_1 (a,N))^{1/a}
+\|(f_k)\|_{\ell^{\tilde{a}}} \, (\sigma_2 (a,N))^{1/a},
$$
where $\frac{1}{a}+ \frac{1}{\tilde{a}}=1,$  $
\|(f_k)\|_{\ell^{\tilde{a}}} \leq 2\|f\|_{L^a}$ by the
Young-Haussdorf theorem, and
\begin{equation}
\label{ec35} \sigma_1 (a,N) =\sum_{|k|\leq N} \left ( \sum_{|j|>N}
\frac{|q(j-k)|}{|j+k|} \right )^a, \quad \sigma_2 (a,N) =\sum_{|k|>
N} \left ( \sum_{|j|\leq N} \frac{|q(j-k)|}{|j+k|} \right )^a.
\end{equation}
 Therefore,
\begin{equation}
\label{ec34} \|T_N \|_{L^a \to L^\infty} \lesssim (\sigma_1 (a,N) +
(\sigma_2 (a,N))^{1/a}.
\end{equation}

The situation is similar if $bc=Dir$ and $(u_k) $ is the
corresponding canonical basis (\ref{0.013}). If $f = \sum_{k\in
\mathbb{N}} f_k u_k $ is the expansion of $f \in L^2 ([0,\pi]),$
then by (\ref{d.3}), (\ref{d.10}), (\ref{ec21}) and (\ref{3.21}) we
obtain
$$ T_N  f = \frac{1}{2\pi i}
\int_{\partial \Pi_N} \sum_k f_k \sum_j
\frac{|j-k|\,\tilde{q}(|j-k|)-
(j+k)\,\tilde{q}(j+k)}{\sqrt{2}(\lambda-j^2)(\lambda-k^2)} u_j (x) d
\lambda =
$$
$$ -\sum_{1\leq k\leq N} f_k \sum_{j>N}
\frac{\tilde{q}(j-k)}{j+k} \frac{1}{\sqrt{2}}u_j (x)  + \sum_{1\leq
k\leq N} f_k \sum_{j>N} \frac{\tilde{q}(j+k)}{j-k}
\frac{1}{\sqrt{2}}u_j (x)
$$
$$
- \sum_{k> N} f_k \sum_{1\leq j \leq N} \frac{\tilde{q}(k-j)}{j+k}
\frac{1}{\sqrt{2}}u_j (x) + \sum_{k> N} f_k \sum_{1\leq j \leq N}
\frac{\tilde{q}(j+k)}{k-j}\frac{1}{\sqrt{2}}u_j (x).$$ By
(\ref{0.051}), $|u_j (x)|\leq \sqrt{2}, $ so using the H\"older
inequality as above we obtain (\ref{ec34}) holds with
\begin{equation}
\label{ec35d} \sigma_1 (a,N) = \sum_{k=-N}^N \left ( \sum_{j>N}
\frac{|\tilde{q}(j-k)|}{j+k} \right )^a, \quad \sigma_2 (a,N)
=\sum_{k> N}  \left (\sum_{j=-N}^N  \frac{|\tilde{q}(k-j)|}{k+j}
\right )^a.
\end{equation}
In view of (\ref{ec35}) and (\ref{ec35d}), if we set $q(k) = 0 $ for
$k \in 2\mathbb{Z}+1 $ in the case $bc= Per^\pm,$ and $q(k)
=\tilde{q} (|k|) $ in the case $bc= Dir,$ and define $\sigma_1,
\,\sigma_2 $ by (\ref{ec35}) with $j,k\in \mathbb{Z},$ then
(\ref{ec34}) holds in all three cases $bc=Per^\pm,\, Dir. $ Next we
estimate $\sigma_1 $ and $\sigma_2 $ in terms of remainders
$\mathcal{E}_M (q).$
\begin{Lemma}
\label{lemE} If $q\in \ell^2 ( \mathbb{Z}), $ then
\begin{equation}
\label{e71} \sigma_1  (a,N) \lesssim \frac{1}{N^{a/2}} \sum_{0 \leq
k\leq N} (\mathcal{E}_{N+1-k} (q))^a +
\begin{cases}
(\mathcal{E}_N (q))^a N^{1-\frac{a}{2}}, &  1 \leq a <2,\\
(\mathcal{E}_N (q))^2 \log N, &   a=2,
\end{cases}
\end{equation}
\begin{equation}
\label{e72} \sigma_2  (a,N) \lesssim \sum_{k> N} (\mathcal{E}_{k-N}
(q))^a \left ( \frac{1}{k} -\frac{1}{N+k} \right )^{a/2} +
\begin{cases}
(\mathcal{E}_N (q))^a N^{1-\frac{a}{2}}, &  1 \leq a <2,\\
(\mathcal{E}_N (q))^2 \log N, &   a=2.
\end{cases}
\end{equation}
\end{Lemma}

\begin{proof}
Changing, if negative, $k$ with $-k$ and $j$ with $-j$ we obtain
$$ \sigma_1 (a, N) \lesssim \sum_{0 \leq k\leq N} \left ( \sum_{j>N}
\frac{|q(j-k)|}{j+k} \right )^a + \sum_{0 \leq k\leq N}\left
(\sum_{j>N} \frac{|q(-j+k)|}{j+k} \right )^a $$ $$ + \sum_{0 \leq
k\leq N} \left ( \sum_{j>N} \frac{|q(j+k)|}{j-k} \right )^a +\sum_{0
\leq k\leq N} \left (\sum_{j>N} \frac{|q(-j-k)|}{j-k} \right )^a. $$
By the Cauchy inequality, it follows that $$ \sigma_1 (a,N) \lesssim
\sum_{0 \leq k\leq N} \left ( \sum_{j>N} (|q(j-k)|^2 +|q(-j+k)|^2
)\right )^{a/2} \left (\sum_{j>N} \frac{1}{(j+k)^2} \right )^{a/2}
$$ $$ + \sum_{0 \leq k\leq N} \left ( \sum_{j>N} (|q(j+k)|^2
+|q(-j-k)|^2 )\right )^{a/2} \left (\sum_{j>N} \frac{1}{(j-k)^2}
\right )^{a/2}. $$ Therefore, $$ \sigma_1 (a,N)\lesssim \sigma_{1,1}
(a, N) +\sigma_{1,2} (a, N), $$ where $$ \sigma_{1,1}  = \sum_{0 \leq
k\leq N} (\mathcal{E}_{N+1-k} (q))^a \frac{1}{(N+k)^{a/2}} \lesssim
\frac{1}{N^{a/2}} \sum_{0 \leq k\leq N} (\mathcal{E}_{N+1-k} (q))^a,
$$
 $$ \sigma_{1,2}
=\sum_{0 \leq k\leq N} (\mathcal{E}_{N+1+k} (q))^a
\frac{1}{(N+1-k)^{a/2}} \lesssim
\begin{cases}
(\mathcal{E}_N (q))^aN^{1-\frac{a}{2}}, &  1 \leq a <2,\\
(\mathcal{E}_N (q))^2 \log N, &   a=2,
\end{cases}
$$
because
$$
\sum_{s>M} \frac{1}{s^2} \leq \frac{1}{M}, \quad \sum_{s=1}^N
\frac{1}{s^{a/2}}  \lesssim \int_1^N x^{-\frac{a}{2}} dx \lesssim
\begin{cases}
N^{1-\frac{a}{2}}, &  1 \leq a <2,\\ \log N, &   a=2.
\end{cases}
$$
Thus, (\ref{e71}) holds.

Next we estimate $ \sigma_2 (a, N). $ As for $\sigma_1 (a,N), $ we
obtain
 $$ \sigma_2  \lesssim \sum_{ k>
N} \left ( \sum_{0 \leq j\leq N} (|q(j-k)|^2 +|q(-j+k)|^2 )\right
)^{a/2} \left (\sum_{0 \leq j\leq N} \frac{1}{(j+k)^2} \right
)^{a/2}
$$ $$ + \sum_{ k > N} \left ( \sum_{0 \leq j\leq N} (|q(j+k)|^2
+|q(-j-k)|^2 )\right )^{a/2} \left (\sum_{0 \leq j\leq N}
\frac{1}{(k-j)^2} \right )^{a/2}. $$ Therefore, $$ \sigma_2 (a,
N)\lesssim \sigma_{2,1} (a,N) +\sigma_{2,2} (a,N),
$$ where $$ \sigma_{2,1}  =  \sum_{k> N} (\mathcal{E}_{k-N}
(q))^a \left ( \frac{1}{k} -\frac{1}{N+k} \right )^{a/2}, $$ $$
\sigma_{2,2} =\sum_{k> N} (\mathcal{E}_{k} (q))^a \left (
\frac{1}{k-N} -\frac{1}{k} \right )^{a/2} \lesssim \begin{cases}
(\mathcal{E}_{N} (q))^a N^{1-a/2}, &  1<a<2, \\
(\mathcal{E}_{N} (q))^a \log N, &   a=2,
\end{cases}
$$
because $\sum_{M_1}^{M_2} \frac{1}{s^2} \lesssim \frac{1}{M_1} -
\frac{1}{M_2}, $
$$
\sum_{k> N}  \left ( \frac{1}{k-N} -\frac{1}{k} \right
)^{\frac{a}{2}} = N^{\frac{a}{2}} \left (\sum_{s=1}^N
\frac{1}{s^{\frac{a}{2}}(N+s)^{\frac{a}{2}}} +\sum_{s=N+1}^\infty
\frac{1}{s^{\frac{a}{2}}(N+s)^{\frac{a}{2}}}\right )
$$
$$
\lesssim N^{\frac{a}{2}} \left (\sum_{s=1}^N
\frac{1}{s^{\frac{a}{2}}N^{\frac{a}{2}}} +\sum_{s=N+1}^\infty
\frac{1}{s^a}\right ) \lesssim N^{1-a/2} \quad \text{if}  \;\;
1<a<2,
$$
and
$$
\sum_{k> N}  \left ( \frac{1}{k-N} -\frac{1}{k} \right ) =
\lim_{M\to \infty} \sum_1^M \left (\frac{1}{s} -\frac{1}{s+N} \right
)=\sum_1^N \frac{1}{s} \lesssim \log N.
$$
Hence, (\ref{e72}) follows.

\end{proof}

The following lemma proves (\ref{e24}) and (\ref{e26}).

\begin{Lemma}
\label{lemL} In the above notations, if $q \in \ell^2 (\mathbb{Z}) $
and $ H \in (0, N/2), $ then
\begin{equation}
\label{e200} \sigma_1  (2,N)+ \sigma_2  (2,N) \lesssim \|q\|^2
\frac{H}{N} + (\mathcal{E}_{H} (q))^2 + (\mathcal{E}_{N} (q))^2
\cdot \log N.
\end{equation}
Moreover, if $q \in \ell^2 (\Omega, \mathbb{Z}) $ with $\Omega (k) =
(\log (e+|k|))^\beta, $   $ \beta > 1/2,$  then
\begin{equation}
\label{e201} \sigma_1  (2,N)+ \sigma_2  (2,N) \lesssim \|q\|^2
N^{-1/2} + (\mathcal{E}^{\Omega}_{\sqrt{N}} (q))^2 \cdot (\log
N)^{1-2\beta}.
\end{equation}

\end{Lemma}

\begin{proof}
Indeed, (\ref{e200}) follows from (\ref{e71}) and (\ref{e72})
because
$$
\sum_{k=0}^N (\mathcal{E}_{N+1-k} (q))^2 \leq
\sum_{k=0}^{N-H}(\mathcal{E}_{H} (q))^2+ \sum_{N-H}^N \|q\|^2 \leq N
\cdot(\mathcal{E}_{H} (q))^2+ H \|q\|^2,
$$
and
$$
\sum_{k>N} (\mathcal{E}_{k-N} (q))^2 \left (\frac{1}{k}-
\frac{1}{k+N} \right )\leq \sum_{k=N+1}^{N+H} \|q\|^2 \frac{1}{k}+
\sum_{k>N+H} (\mathcal{E}_{H} (q))^2 \left (\frac{1}{k}-
\frac{1}{k+N} \right )
$$
$$
\leq  \|q\|^2 \frac{H}{N} + (\mathcal{E}_{H} (q))^2
\sum_{k=N}^\infty \frac{N}{k(k+1)}=\|q\|^2 \frac{H}{N} +
(\mathcal{E}_{H} (q))^2.
$$

If $q \in \ell^2(\Omega) $ with $\Omega (k) =(\log (e+|k|))^\beta, $
then by (\ref{eclog}) $\mathcal{E}_{M} (q) \leq
\frac{\mathcal{E}^\Omega_{M} (q)}{(\log (e+M))^\beta}. $ Therefore,
(\ref{e200}) with $H=\sqrt{N} $ implies (\ref{e201}).
\end{proof}

Now (\ref{e24}) and (\ref{e26}) follow immediately from (\ref{ec34})
and, respectively, (\ref{e200}) and (\ref{e201}).

\begin{Lemma}
\label{lemd}
 If $1 \leq a < 2 $  and $q \in \ell^2 (\Omega) $ with $\Omega
(k) = 1+ |k|^\delta, $  $ \delta > 0 $ then
\begin{equation}
\label{e76} \sigma_1 (a,N)+\sigma_2 (a, N) \lesssim \|q\|^a
\frac{H}{N^{a/2}} +(\mathcal{E}^{\Omega}_{H} (q))^a N^{1-\delta a
-\frac{a}{2}}, \quad 0 <H <\frac{N}{2}.
\end{equation}
\end{Lemma}

\begin{proof}
In view of (\ref{eclog}), $\mathcal{E}_{M} (q)\leq
\mathcal{E}^\Omega_{M} (q)/\Omega (M)=\mathcal{E}^\Omega_{M}
(q)/M^\delta,$ so (\ref{e71}) implies
$$
\sigma_1 (a,N) \lesssim N^{-\frac{a}{2}}\sum_{k=0}^{N-H}
\frac{(\mathcal{E}^\Omega_{N+1-k}(q))^a}{(N+1-k)^{a\delta}}+N^{-\frac{a}{2}}
\sum_{N-H+1}^{N} \|q\|^2
  +   \frac{(\mathcal{E}^\Omega_{N} (q))^a}{N^{a\delta}} N^{1-\frac{a}{2}}.$$
Therefore, taking into account that $\mathcal{E}^\Omega_{N+1-k}
(q)\leq \mathcal{E}^\Omega_{H} (q) $ for $ 0\leq k \leq N-H, $ we
obtain
\begin{equation}
\label{es1} \sigma_1 (a,N) \lesssim \|q\|^a \cdot \frac{H}{N^{a/2}}
+(\mathcal{E}^\Omega_{H} (q))^a N^{1-a\delta-\frac{a}{2}}
\end{equation}
because $\sum_{k=0}^{N-H} \frac{1}{(N+1-k)^{a\delta}}\lesssim
\sum_{s=1}^N \frac{1}{s^{a\delta}}\lesssim N^{1-a\delta}.$

Next we estimate $\sigma_2$ in an analogous way. From (\ref{e72}) it
follows that
$$
\sigma_2 (N) =\sum_{N+1}^{N+H} \|q\|^2
 \frac{1}{N^{a/2}}  +  \sum_{N+H+1}^\infty
(\mathcal{E}_{k-N} (q))^a \left ( \frac{1}{k} -\frac{1}{N+k} \right
)^{\frac{a}{2}} + (\mathcal{E}_{N} (q))^a N^{1-\frac{a}{2}}$$
$$ \leq \|q\|^a \cdot \frac{H}{N^{a/2}} +
(\mathcal{E}^\Omega_{H} (q))^a \sum_{N+H+1}^\infty
\frac{1}{(k-N)^{a\delta}}\left ( \frac{1}{k} -\frac{1}{N+k} \right
)^{\frac{a}{2}} + (\mathcal{E}^\Omega_{N} (q))^a
N^{1-a\delta+\frac{a}{2}}.
$$

Since $\mathcal{E}^\Omega_{N}(q) \leq \mathcal{E}^\Omega_{H}(q)$ and
$$
\sum_{N+H+1}^\infty \frac{1}{(k-N)^{a\delta}}\left ( \frac{1}{k}
-\frac{1}{N+k} \right )^{\frac{a}{2}} =
\sum_{N+1}^{2N}\frac{1}{(k-N)^{a\delta}}
\frac{N^{\frac{a}{2}}}{k^{\frac{a}{2}}(k+N)^{\frac{a}{2}}}
$$
$$
+\sum_{2N+1}^{\infty}\frac{1}{(k-N)^{a\delta}}
\frac{N^{\frac{a}{2}}}{k^{\frac{a}{2}}(k+N)^{\frac{a}{2}}} \lesssim
N^{-\frac{a}{2}} \sum_{s=1}^N \frac{1}{s^{a\delta}}+
N^{\frac{a}{2}}\sum_{N}^\infty \frac{1}{s^{a\delta +a}}\lesssim
N^{1-a\delta-\frac{a}{2}},
$$
we obtain
\begin{equation}
\label{es2} \sigma_2 (a,N) \lesssim \|q\|^a \cdot \frac{H}{N^{a/2}}
+(\mathcal{E}^\Omega_{H} (q))^a N^{1-a\delta-\frac{a}{2}}.
\end{equation}
Now, (\ref{es1}) and (\ref{es2}) imply (\ref{e76}).
\end{proof}
Finally, (\ref{ec34}) and (\ref{e76}) imply (\ref{e25}), which
completes the proof of Proposition~\ref{propAN}.
\end{proof}

\section{The case $v \in H^{-1}_{per},$
 $S_N -S_N^0:L^a \to L^b, \; b <\infty.$}

1. Our main result in this section is the following.
\begin{Theorem}
\label{thmab} Suppose
 $S_N, \, S_N^0 $ are the spectral
projections defined by (\ref{ec20}) for the Hill operators $L_{bc}
(v) $ and $L_{bc}^0 $ subject to the boundary conditions $bc =
Per^\pm $ or $ Dir. $ Let $v \in H^{-1}_{per},  \;  v = Q^\prime,\;
Q \in L^2 ([0,\pi]), $ and let $q= (q(k))_{k\in 2\mathbb{Z}}$ and
$\tilde{q}=(\tilde{q}(m))_{m\in \mathbb{N}}$ be, respectively, the
sequences of the Fourier coefficients of $Q $   about the o.n.b.
$\{e^{ikx}, \, k \in 2\mathbb{Z}\} $ and $\{\sqrt{2} \sin mx, \, m
\in \mathbb{N}\}.$  If
\begin{equation}
\label{ab1} 1< a \leq b < \infty \quad \text{with} \quad \delta:=
1/2 - (1/a -1/b) >0,
\end{equation}
then
\begin{equation}
\label{ab2} \|S_N - S_N^0: L^a \to L^b \| \lesssim N^{-\tau} +
\begin{cases}
\mathcal{E}_N(q)  & \text{if} \;\;  bc=Per^\pm, \\
\mathcal{E}_N(\tilde{q}) & \text{if} \;\; bc=Dir,
\end{cases}
\end{equation}
where $\tau= \delta $ in the case $ 1 < a < 2 < b < \infty, $ and
otherwise one may take any $\tau $ such that
$$\tau <  \begin{cases}  1- 1/a    &   \text{if}
\;\; 1 < a \leq b \leq 2,\\
1/b   &  \text{if} \;\;  2 \leq  a \leq b < \infty.
\end{cases}  $$
 \end{Theorem}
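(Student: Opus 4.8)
The plan is to use the splitting $S_N - S_N^0 = T_N + B_N$ of (\ref{ec2a}), with $T_N$ and $B_N$ given by (\ref{ec21}) and (\ref{ec22}), and to estimate the two parts separately, exactly as in the proofs of Theorems~\ref{Thm2.12} and \ref{thmec}. For the higher--order part $B_N$ I would invoke Remark~\ref{rem107}(b), which (since $1/a-1/b<1$) represents $B_N$ as a single integral over the line $\Lambda_N$, convergent in the $\|\cdot\|_{L^a\to L^b}$ norm; then $\|B_N\|_{L^a\to L^b}$ is bounded by integrating the norm of the integrand. For $T_N$, the term linear in $V$, I would integrate first over $\partial\Pi_N$ and pass to the matrix representations of Lemmas~\ref{lin} and \ref{linD}, after which the estimate becomes a question about one concrete matrix operator, treated in parallel for $bc=Per^\pm$ (with $q$) and $bc=Dir$ (with $\tilde q$).

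To estimate $B_N$ I would factor $R^0_\lambda(VR^0_\lambda)^m=K_\lambda(K_\lambda VK_\lambda)^mK_\lambda$ via (\ref{4.24})--(\ref{4.25}) and use
\[\|R^0_\lambda(VR^0_\lambda)^m\|_{L^a\to L^b}\leq \|K_\lambda\|_{L^a\to L^2}\,\|K_\lambda VK_\lambda\|_{HS}^m\,\|K_\lambda\|_{L^2\to L^b}.\]
Hausdorff--Young together with the multiplier rule (\ref{mult}) gives $\|K_\lambda\|_{L^a\to L^2}\lesssim A(\lambda,p_a)^{1/2}$ and $\|K_\lambda\|_{L^2\to L^b}\lesssim A(\lambda,p_b)^{1/2}$, where $1/p_a=2(1/a-1/2)$ and $1/p_b=2(1/2-1/b)$ (these exponents are finite precisely because $a<2<b$ in the generic range). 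Since $\|K_\lambda VK_\lambda\|_{HS}<1/2$ on $\Lambda_N$ for large $N$ by Lemma~\ref{lemBR}, the tail $\sum_{m\geq2}$ is dominated by its first term, and Lemma~\ref{lemBE} applied to (\ref{Z}) controls $\|K_\lambda VK_\lambda\|_{HS}^2=\psi_N(y)$. The resulting integral $\int_{\Lambda_N}A(\lambda,p_a)^{1/2}A(\lambda,p_b)^{1/2}\psi_N(y)\,dy$ is then estimated by the Appendix bounds of Corollary~\ref{cor17A} and (\ref{eca})--(\ref{ecb}), yielding a purely polynomial bound $\|B_N\|_{L^a\to L^b}\lesssim N^{-\tau}$ which is absorbed into the first term of (\ref{ab2}).

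The core of the argument is the bound for $T_N$. By Lemma~\ref{lin} its entries satisfy $|T_N(j,k)|=|q(j-k)|/|j+k|$ on $X(N)$ (the factor $|j-k|$ from $V(j-k)=i(j-k)q(j-k)$ cancelling one factor of $|j^2-k^2|$) and vanish off $X(N)$, and analogously for $Dir$ by Lemma~\ref{linD}. In the generic range $1<a<2<b<\infty$ both Hausdorff--Young inequalities point the right way, so writing $f=\sum_kf_ku_k$ I would estimate $\|T_Nf\|_{L^b}\lesssim\|((T_Nf)_j)\|_{\ell^{b'}}$ and $\|(f_k)\|_{\ell^{a'}}\lesssim\|f\|_{L^a}$, reducing everything to the matrix norm $\|T_N\|_{\ell^{a'}\to\ell^{b'}}$ with $a'\geq2\geq b'$. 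To bound this I would split the kernel according to $|j-k|<N$ and $|j-k|\geq N$, forming quantities analogous to the $\sigma_1,\sigma_2$ of Proposition~\ref{propAN} but now measured in $\ell^{b'}$ on the output index. On the low--frequency block I would extract $\|q\|$ by Cauchy--Schwarz in the summation variable and reduce to elementary sums $\sum_{X(N)}|j+k|^{-s}$ (comparable to their integrals); a Schur test across the pair $(\ell^{a'},\ell^{b'})$ then produces the sharp decay $N^{-\delta}$ with $\delta=1/2-(1/a-1/b)$. On the high--frequency block I would replace $|q(j-k)|$ by the tail $\mathcal{E}_N(q)$ of (\ref{E}) and absorb the remaining kernel into a bounded $\ell^2$--sum, which is exactly the irreducible term $\mathcal{E}_N(q)$ (respectively $\mathcal{E}_N(\tilde q)$).

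The main obstacle is the matrix estimate outside the generic range. Here I would not interpolate but use the normalized--measure embeddings $L^2\hookrightarrow L^b$ (for $b\leq2$) and $L^2\hookleftarrow L^a$ (for $a\geq2$): in the case $1<a\leq b\leq2$ one has $\|T_N\|_{L^a\to L^b}\leq\|T_N\|_{L^a\to L^2}$, so Parseval replaces the failed output step and only $\|T_N\|_{\ell^{a'}\to\ell^2}$ is needed; dually, $2\leq a\leq b$ gives $\|T_N\|_{L^a\to L^b}\leq\|T_N\|_{L^2\to L^b}$, reducing to $\|T_N\|_{\ell^2\to\ell^{b'}}$, and the symmetry of $X(N)$ under $(j,k)\mapsto(k,j)$ makes this the adjoint of the previous case. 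Each such reduction lands on the $\ell^2$--endpoint of the generic matrix estimate, where $\delta$ degenerates to $1-1/a$ (resp.\ $1/b$); the logarithmic loss typical of the $L^2$ boundary (compare the $r=2$ line in (\ref{2.43})) forces the strict inequalities $\tau<1-1/a$ and $\tau<1/b$. The same two embedding reductions apply verbatim to $B_N$. The delicate bookkeeping, which the full proof must carry out, is to extract the sharp $N^{-\tau}$ decay solely from the low--frequency block while keeping the high--frequency contribution uniformly equal to $\mathcal{E}_N(q)$ across all three cases.
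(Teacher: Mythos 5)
Your overall architecture ($S_N-S_N^0=T_N+B_N$, Hausdorff--Young reduction of $T_N$ to a matrix estimate split at $|j-k|=N$, the $K_\lambda(K_\lambda VK_\lambda)^mK_\lambda$ factorization of $B_N$ with $\|K_\lambda\|_{L^a\to L^2}\lesssim A^{1/2}(\lambda,\tfrac{a}{2-a})$) coincides with the paper's treatment of the generic case $1<a<2<b<\infty$ in Propositions~\ref{propTab} and~\ref{propab}. But one step is wrong and one is a gap. The claim that $B_N$ admits ``a purely polynomial bound $\|B_N\|_{L^a\to L^b}\lesssim N^{-\tau}$'' is false for $v\in H^{-1}_{per}$: Lemma~\ref{lemBE} bounds $\psi_N(y)$ by terms carrying $(\mathcal{E}_{\sqrt N}(q))^2$ and $(\mathcal{E}_{4N}(q))^2$, and for $Q$ merely in $L^2$ these tails have no rate, so Proposition~\ref{propab} yields $\|B_N\|_{L^a\to L^b}\lesssim \|q\|^2/N+(\mathcal{E}_{\sqrt N}(q))^2$ and the second term cannot be absorbed into $N^{-\tau}$. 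Your bookkeeping assigns the entire $\mathcal{E}_N(q)$ contribution in (\ref{ab2}) to the high-frequency block of $T_N$ and none to $B_N$, which the estimates do not support. (Minor: the smallness $\|K_\lambda VK_\lambda\|_{HS}<1/2$ on $\Lambda_N$ comes from Lemma~\ref{lemBE} with (\ref{eca})--(\ref{ecb}), not from Lemma~\ref{lemBR}, which concerns the horizontal sides.)

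Outside the generic range your route genuinely differs from the paper's and, as written, is incomplete. You embed $L^b\supset L^2$ (for $b\le 2$), respectively $L^a\subset L^2$ (for $a\ge 2$), and propose to prove the endpoint matrix estimates $\ell^{a'}\to\ell^2$ and $\ell^2\to\ell^{b'}$ directly, accepting ``a logarithmic loss typical of the $L^2$ boundary''; but no argument is given for these endpoint bounds, and they are precisely the delicate point. The paper never touches the endpoint: in the case $1<a\le b\le 2$ it replaces $(a,b)$ by $(a_1,b_1)=(a-\varepsilon,2+\varepsilon)$, uses the monotonicity $\|S_N-S_N^0: L^a\to L^b\|\le\|S_N-S_N^0:L^{a_1}\to L^{b_1}\|$ of normalized $L^p$-norms, applies the generic case with $\delta_1=1/2-(1/a_1-1/b_1)$, and lets $\varepsilon\to0$ (and symmetrically with $(2-\varepsilon,b+\varepsilon)$ when $2\le a\le b$); this is exactly why the theorem asserts only the strict inequalities $\tau<1-1/a$ and $\tau<1/b$. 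Your plan could in principle give the slightly sharper rate $N^{-(1-1/a)}\log N$, but only after the endpoint estimate is actually carried out; until then this step is a gap, whereas the paper's $\varepsilon$-perturbation requires no new estimate at all.
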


{\em Remark}. This theorem (quoted as Proposition 16 in \cite{DM28})
is an important element in the
 proof of our Criterion for basisness in $L^p, \; p \neq 2,$
 of the system of root functions
 in the case of Hill
operators with  singular potentials.

\begin{proof}
By (\ref{ec2a}),  $S_N - S_N^0 = T_N + B_N, $ where $T_N $ and $B_N $
are the operators defined by (\ref{ec21}) and (\ref{ec22}). If $1< a
< 2 < b < \infty, $ then Propositions~\ref{propTab} and \ref{propab}
below imply (\ref{ab2}) with $\tau = \delta. $

If (\ref{ab1}) holds but  $ a < 2 < b $ fails, then $$ \text{either}
\;\;(i) \;\;    1<a \leq b \leq 2, \quad \text{or} \;\; (ii) \;\; 2
\leq a \leq b < \infty. $$  We set, respectively, $a_1 =
a-\varepsilon, \, b_1 = 2+ \varepsilon $ in the case (i), and $a_1
=2-\varepsilon, \, b_1 = b+ \varepsilon $ in the case (ii). Then, for
small enough $\varepsilon >0,$ we have
$$1<a_1 < 2 < b_1 <\infty,
\quad \delta_1 =1/2-(1/a_1 -1/b_1) >0.
$$
Since $ \|S_N - S_N^0: L^a \to L^b \| \leq  \|S_N - S_N^0: L^{a_1}
\to L^{b_1} \|, $  it follows that
$$\|S_N - S_N^0: L^a \to L^b \|
\lesssim N^{-\delta_1} + \begin{cases}
\mathcal{E}_N(q)  & \text{if} \;\;  bc=Per^\pm, \\
\mathcal{E}_N(\tilde{q}) & \text{if} \;\; bc=Dir,
\end{cases}
$$
so (\ref{ab2}) holds with $ \tau = \delta_1. $ But in both cases
$\delta_1=\delta_1 (\varepsilon)$ is a monotone decreasing function
of $\varepsilon $ such that
 $\lim_{\varepsilon \to 0} \delta_1
(\varepsilon) =
\begin{cases}  1- 1/a    &   \text{if}
\;\; 1 < a \leq b \leq 2,\\
1/b   &  \text{if} \;\;  2 \leq  a \leq b < \infty.
\end{cases}
$ This completes the proof up to Propositions~\ref{propTab} and
\ref{propab} below.
\end{proof}
\bigskip

2. Next we estimate the norms $\|T_N: L^a \to L^b\|. $
\begin{Proposition}
\label{propTab} If $v \in H^{-1}_{per}, $ then for $1 < a < 2 <b <
\infty $ with
\begin{equation}
\label{3.201} \delta = 1/2 - (1/a - 1/b)>0
\end{equation}
\begin{equation}
\label{3.202} \|T_N: L^a \to L^b\| \lesssim N^{-\delta}
+\begin{cases}
\mathcal{E}_N(q)  & \text{if} \;\;  bc=Per^\pm, \\
\mathcal{E}_N(\tilde{q}) & \text{if} \;\; bc=Dir.
\end{cases}
\end{equation}
\end{Proposition}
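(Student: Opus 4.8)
The plan is to pass to the matrix representation of $T_N$ and reduce the operator-norm estimate to a discrete (matrix) estimate. By Lemmas~\ref{lin} and \ref{linD}, together with the unification device used in the proof of Proposition~\ref{propAN} (set $q(k)=0$ for odd $k$ when $bc=Per^\pm$ and $q(k)=\tilde q(|k|)$ when $bc=Dir$), the entries of $T_N$ are, up to sign, $|T_N(m,k)|=|q(m-k)|/|m+k|$ on the set $X(N)$, where $q\in\ell^2(\mathbb Z)$ and $\|q\|_2=\|Q\|_{L^2}<\infty$ since $v\in H^{-1}_{per}$. Because $1<a<2<b<\infty$ we have $a'>2>b'$, so the Hausdorff--Young inequality (as in \eqref{F}) gives $\|(f_k)\|_{a'}\lesssim\|f\|_a$ on the input side and $\|T_Nf\|_b\lesssim\|(g_m)\|_{b'}$ on the output side. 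Hence it suffices to bound the matrix norm $\|T_N\|_{\ell^{a'}\to\ell^{b'}}$, and I would split the kernel according to $|m-k|\le N$ (low part) and $|m-k|>N$ (high part).

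For the low part the estimate is clean and produces the $N^{-\delta}$ term. On $X(N)$ the constraint $|m-k|\le N$ forces $|m+k|\sim N$ and confines both indices to bands of length $O(N)$: if $|m|\le N<|k|$ then $N<|m+k|\le 3N$ and $|k|\le 2N$. Using $1/|m+k|\le 1/N$, Young's inequality $\ell^1\ast\ell^2\to\ell^2$ together with $\|q\mathbf 1_{|\cdot|\le N}\|_1\le(2N+1)^{1/2}\|q\|_2$, and the elementary band inequalities $\|g\|_{b'}\le(CN)^{1/b'-1/2}\|g\|_2$ (valid since $b'<2$) and $\|f\|_2\le(CN)^{1/2-1/a'}\|f\|_{a'}$ (valid since $a'>2$), the powers of $N$ combine to exactly $1/b'-1/a'-1/2=(1/a-1/b)-1/2=-\delta$. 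This gives the low-part bound $N^{-\delta}\|q\|_2$.

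For the high part the relevant coefficients form the $\ell^2$-tail, with $\|q\mathbf 1_{|\cdot|>N}\|_2=\mathcal E_N(q)$ in the notation \eqref{E}, and this is where the main difficulty lies. A Hilbert--Schmidt computation gives $\|T_N^{\mathrm{high}}\|_{\ell^2\to\ell^2}\lesssim\mathcal E_N(q)$: summing $|q(m-k)|^2/|m+k|^2$ first over the shift with $s=m+k$ fixed yields $\sum_j(2j-s)^{-2}\lesssim 1$, so the HS norm is $\lesssim(\sum_{|s|>N}|q(s)|^2)^{1/2}$. The obstacle is to upgrade this $\ell^2\to\ell^2$ bound to $\ell^{a'}\to\ell^{b'}$ \emph{without} paying a band-conversion factor $N^{1/a-1/b}$. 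Unlike the low part, $|m+k|$ has no lower bound here, and in the ``core'' region $m\approx -k$ with $|m|,|k|\sim N$ the factor $1/(m+k)$ is a genuine discrete-Hilbert-transform singularity carrying high-frequency coefficients $q(m-k)$, $|m-k|\in(N,4N]$; neither a term-by-term Minkowski bound (which costs the $\ell^1$-tail of $q$) nor a crude passage through $\ell^2$ with band inequalities is sharp enough.

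To overcome this I would exploit that all four exponents $a,b,a',b'$ lie in $(1,\infty)$, where the discrete Hilbert transform $(\mathcal Hx)_n=\sum_{k\neq n}x_k/(n-k)$ is bounded (the same tool invoked in Proposition~\ref{propeq}, cf.~\cite{HMW}). After the reflection $k\mapsto-k$ the high-part kernel becomes $q(m+\ell)/(m-\ell)$, displaying the Hilbert kernel $1/(m-\ell)$ weighted by a high-frequency factor; I would estimate its $\ell^{a'}\to\ell^{b'}$ norm by $\mathcal E_N(q)$ via Riesz--Thorin interpolation anchored at the $\ell^2\to\ell^2$ Hilbert--Schmidt bound and at off-diagonal endpoints supplied by the $\ell^p$-boundedness of $\mathcal H$, with the hypothesis $\delta>0$ guaranteeing that the interpolated power of $N$ is non-positive. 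The two pieces of $X(N)$, namely $|m|\le N<|k|$ and $|k|\le N<|m|$, are mutually adjoint, so a single type of estimate (for both exponent pairs $(a',b')$ and $(b,a)$, which are of the same form) covers both. Adding the low and high contributions yields \eqref{3.202}, the Dirichlet case being identical under the unification above. The main obstacle, as indicated, is precisely the sharp $\ell^{a'}\to\ell^{b'}$ control of the Hilbert-transform core of the high part.
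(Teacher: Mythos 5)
Your reduction to a matrix estimate via Hausdorff--Young and your treatment of the low part $|m-k|\le N$ are sound: on that set one indeed has $N<|m+k|\lesssim N$ with both indices confined to bands of length $O(N)$, and Young's inequality plus the band embeddings produce the exponent $1/b'-1/a'-1/2=-\delta$; this is an acceptable repackaging of what the paper does on $\Delta_N$ via the triple H\"older inequality over the diagonals $\ell_j$ in (\ref{3.209})--(\ref{3.214}). The genuine gap is exactly where you flag it: the high part $|m-k|>N$. Your Hilbert--Schmidt computation correctly gives an $\ell^2\to\ell^2$ bound by $\mathcal{E}_N(q)$, but this endpoint sits in the wrong quadrant of the Riesz--Thorin diagram. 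You need the point $(1/a',1/b')$ with $1/a'<1/2<1/b'$; the point $(1/2,1/2)$ together with any diagonal endpoint $(1/p,1/p)$ furnished by the $\ell^p$-boundedness of $\mathcal{H}$ spans only the diagonal, and together with the cheap off-diagonal endpoints $\ell^1\to\ell^\infty$ (constant $\mathcal{E}_N(q)$) or $\ell^\infty\to\ell^1$ (whose constant carries the $\ell^1$-tail of $q$ and hence a factor of order $N^{1/2}\log N$) it reaches only points with $1/p+1/q=1$, which forces $b'=a$ and still leaves a positive power of $N$. So the interpolation you propose has no admissible second anchor: ``off-diagonal endpoints supplied by the $\ell^p$-boundedness of $\mathcal{H}$'' do not exist, precisely because $\mathcal{H}$ is bounded only diagonally; moreover the kernel $q(m+\ell)/(m-\ell)$ is not $\mathcal{H}$ composed with a multiplier, since the numerator depends on both indices.

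What closes the high part in the paper is not interpolation but a weighted Cauchy--Schwarz/H\"older argument exploiting that on $X(N)$ one index is confined to $[-N,N]$. One factors $|m+k|^{-1}=|m+k|^{-p}\,|m+k|^{-q}\,|m+k|^{-r}$ with the exponents of (\ref{3.215})--(\ref{3.216}), applies Cauchy--Schwarz to separate $\sum |q(m-k)|^2|m+k|^{-2p}$ --- which is $\lesssim(\mathcal{E}_N(q))^2$ because $2p=t>1$, and this is exactly where the hypothesis $\delta>0$ enters --- from $\sum |f(k)|^2|m+k|^{-2q}\,|g(m)|^2|m+k|^{-2r}$, and then on each of the four regions $F_j^{\pm}$ of (\ref{3.220})--(\ref{3.221}) replaces $|m+k|$ by $|N+1+k|$ and $|m-N|$ respectively, so that the double sum factors into two single sums, each finite by H\"older because $2q\tilde{\alpha}>1$ and $2r\tilde{b}>1$. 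Until you supply an argument of this kind, the high-frequency estimate --- which you yourself identify as the main obstacle --- remains unproved, and the proposal does not yet establish (\ref{3.202}).
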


\begin{proof}
As in Section 3.3, we obtain the matrix representation of the
operator $T_N $ {\em after integration over} $\partial \Pi_N. $ If
$T_{mk}$ is its matrix representation with respect to the basis
$\{u_k, \, k \in \Gamma_{bc}\}$ of eigenfunctions of the free
operator $L^0_{bc}, $ then $T_{mk}= 0 $ for $(m,k) \not \in X,$
where $X= X(N) $ is defined in (\ref{3.23}) or (\ref{3.23d}).

By the Hausdorf-Young theorems,
\begin{equation}
\label{3.203}
 \|T_N: L^a \to L^b\| \leq 4 \tilde{\tau}, \quad
\tilde{\tau} = \|\tilde{T}_N: \ell^\alpha \to \ell^\beta\|
\end{equation}
where
\begin{equation}
\label{3.204} 1/a + 1/\alpha =1, \quad 1/b + 1/\beta = 1
\end{equation}
and the operator $\tilde{T}_N $ is defined by its matrix,
respectively given by (\ref{3.21a}) if $bc=Per^\pm $ and
(\ref{3.21d}) if $bc=Dir.$ Further we provide details only in the
case $bc = Per^\pm $ because the proof for $bc=Dir $ is the same.

By duality
\begin{equation}
\label{3.205} \tilde{\tau} \leq \sup \left \{ \sum_{(k,m)\in X_N}
\frac{|V(m-k)|}{|m^2 - k^2|} |f(k)| |g(m)|: \; \|f|\ell^\alpha \|=
1, \; \|g|\ell^b \|=1 \right \}.
\end{equation}
Therefore, in view of (\ref{0010}) we need to evaluate
\begin{equation}
\label{3.206} \tau (f,g) = \sum_{(k,m)\in X_N}  \frac{|q(m-k)|}{|m +
k|}
 |f(k)| |g(m)|.
\end{equation}
We set
\begin{equation}
\label{3.207} \Delta_N = \{(k,m) \in X_N: \; |m-k|\leq N  \}, \quad
\Delta^c_N = X_N \setminus \Delta_N,
\end{equation}
and analyze the corresponding partial sums of $\tau(f,g).$

Since $|m+k| \geq N $ on $\Delta_N, $ it follows that
\begin{equation}
\label{3.209} \sum_{(k,m)\in \Delta_N} \; \leq \frac{4}{N}
\sum_{j=1}^N |q(j)| \left (\sum_{\ell_j} |f(k)| |g(m) \right ),
\end{equation}
where
\begin{equation}
\label{3.210} \ell_j = \{(k,m)\in X_N: \;  |m-k| = j\}.
\end{equation}
If
\begin{equation}
\label{3.211} \frac{1}{\alpha} + \frac{1}{b} +\frac{1}{\gamma} = 1
\end{equation}
(so, by (\ref{3.201}), $\frac{1}{\gamma}= \frac{1}{a}  -\frac{1}{b}
<\frac{1}{2} $), then by the triple H\"older inequality
\begin{equation}
\label{3.212} \sum_{\ell_j} |f(k)| \cdot |g(m)|\cdot 1 \leq
\|f|\ell^\alpha \| \cdot \|g|\ell^b \| \cdot (card\,
\ell_j)^{1/\gamma} \leq (4j)^{1/\gamma},
\end{equation}
and by the Cauchy inequality
\begin{equation}
\label{3.213} \sum_1^N |q(j)| \, j^{1/\gamma} \leq \|q\| \left
(\sum_1^N j^{2/\gamma} \right )^{1/2} \sim \|q\| \cdot N^{1/\gamma +
1/2}.
\end{equation}
With extra-factor $1/N $ in (\ref{3.209}) these inequalities imply
that
\begin{equation}
\label{3.214} \sum_{\Delta_N} \leq C(\gamma) N^{-\delta}, \quad
\delta = \frac{1}{2}- \frac{1}{\gamma}.
\end{equation}
To estimate $\sum_{\Delta_N^c} $ we choose positive $p, q , r $ with
$p+q+r=1 $ in the following way:
\begin{equation}
\label{3.215} p = t/2, \quad q= t(1/a - 1/2), \quad r = t(1/2 - 1/b)
\end{equation}
with
\begin{equation}
\label{3.216} 1/2 < 1/t = (1/a - 1/b) +1/2 <1.
\end{equation}
Then
\begin{equation}
\label{3.218} \sum_{\Delta_N^c} \frac{|q(m-k)|}{|m+k|^p} \cdot
\frac{|f(k))|}{|m+k|^p} \cdot  \frac{|g(m)|}{|m+k|^r}
\end{equation}
$$
\leq \left ( \sum_{\Delta_N^c} \frac{|q(m-k)|^2}{|m+k|^{2p}} \right
)^{1/2} \left ( \sum_{\Delta_N^c} \frac{|f(k)|^2}{|m+k|^{2q}} \cdot
\frac{|g(m)|^2}{|m+k|^{2r}} \right )^{1/2}.
$$
With $|m-k| >N $  on  $\Delta_N^c$ the first factor in the
right-hand side of (\ref{3.218}) does not exceed
\begin{equation}
\label{3.219} 8 \mathcal{E}_N (q) \cdot \left ( \sum_1^\infty
\frac{1}{j^{2p} } \right )^{1/2} =C(p) \mathcal{E}_N (q)  < \infty.
\end{equation}
In the second factor we want to make $k$ and $m$ independent; we can
achieve this on four subsets of $\Delta_N^c $ separately, where
\begin{equation}
\label{3.220} \Delta_N^c =F^+_1 \cup F^-_1 \cup F^+_2 \cup F^-_2 \
\end{equation}
with
\begin{equation}
\label{3.221} F^\pm_j = \{(k_1, k_2) \in  \Delta_N^c: \; |k_j | \leq
N,  \; \pm k_{j^\prime} >0\}.
\end{equation}
For $(k,m) \in F^+_1 $ we have $|k| \leq N $ and $m \geq N+1;$ then
either $|m+k| = m+k \geq m-N \geq 1$ or $m+k \geq N+1 +k \geq N+1.$
Therefore,
$$
\sum_{F^+_1} \frac{|f(k)|^2}{|m+k|^{2q}} \cdot
\frac{|g(m)|^2}{|m+k|^{2r}} \leq \sum_{F^+_1} \frac{|f(k)|^2}{|N+1+k
|^{2q}} \cdot \frac{|g(m)|^2}{|m-N|^{2r}}
$$
$$
\leq
 \sum_{i=1}^\infty \frac{|f(-1-N+i)|^2}{i^{2q}}  \cdot
 \sum_{j=1}^\infty \frac{|g(N+j)|^2}{j^{2r}} :=
 \mathcal{F}  \cdot  \mathcal{G}.
$$
 Each of these two factors $\mathcal{F}, \mathcal{G}$
 is estimated by the H\"older inequality,
 respectively with parameters
 $\alpha/2, \; \tilde{\alpha}  $ and $b/2, \tilde{b},$
 i.e.,
 $$
 \frac{2}{\alpha} + \frac{1}{\tilde{\alpha}}=1, \quad
 \frac{2}{b} + \frac{1}{\tilde{b}}=1.
 $$
 This choice, together with (\ref{3.215}) and (\ref{3.216})
 guarantees that
 $$
 2q \tilde{\alpha} >1, \quad 2r \tilde{b} >1,
 $$
 so the first factor does not exceed
 $$
 \mathcal{F} \leq \left (  \sum_j   |f (-1-N+i)|^\alpha
\right )^{2/\alpha} \cdot \left (\sum_i (1/i)^{2q\tilde{\alpha}}
\right )^{1/\tilde{\alpha}} <\infty.
 $$
The same argument with $2r \tilde{b} >1 $ shows that
$$
\mathcal{G} \leq C(g) \cdot \left (\sum_j (1/j)^{2r\tilde{b}} \right
)^{1/\tilde{b}}<\infty.
$$
The other sums over $ F_j^\pm $ could be estimated in an analogous
way. This shows that the sum in (\ref{3.218}) does not exceed
$C(a,b)  \cdot \mathcal{E}_N (q),$
 so together with (\ref{3.214}) we obtain for the form
 $\tau (f,g) $ that
 $$
 \tau (f,g)  \leq C(a,b) \left
 ( N^{-\delta} + \mathcal{E}_N (q) \right ).
 $$
This implies (\ref{3.202}), which completes the proof.
\end{proof}
\bigskip

3. Finally, we estimate the norms $\|B_N:\, L^a \to L^b\|.$
\begin{Proposition}
\label{propab} If
\begin{equation}
\label{ec100} 1 \leq  a < 2 < b \leq  \infty, \quad  1/a - 1/b < 1,
\end{equation}
then
\begin{equation}
\label{ec101} \|B_N:\, L^a \to L^b\| \lesssim
 \frac{\|q\|^2}{N} +\begin{cases}
(\mathcal{E}_{\sqrt{N}} (q))^2 &  \text{if} \;\; bc=Per^\pm,\\
(\mathcal{E}_{\sqrt{N}} (\tilde{q}))^2 &  \text{if} \;\; bc=Dir.
 \end{cases}
\end{equation}
\end{Proposition}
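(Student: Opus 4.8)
The plan is to mirror the proof of Proposition~\ref{propBN}, replacing the symmetric factorization $L^1\to L^2\to L^\infty$ by the asymmetric one $L^a\to L^2\to L^b$, and to exploit the faster decay of the resulting multiplier norms to remove the logarithmic losses that appear in (\ref{ec24a}). By Remark~\ref{rem107}(b), I would start from
$$
B_N=\frac{1}{2\pi i}\int_{\Lambda_N}\sum_{m=2}^\infty R^0_\lambda(VR^0_\lambda)^m\,d\lambda,\qquad \lambda=N^2+N+iy,
$$
with convergence in $\|\cdot\|_{L^a\to L^b}$, so that it suffices to integrate a norm bound of the integrand over $\Lambda_N$. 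Using (\ref{4.24}) to write $R^0_\lambda(VR^0_\lambda)^m=K_\lambda(K_\lambda VK_\lambda)^m K_\lambda$, one gets
$$
\|R^0_\lambda(VR^0_\lambda)^m\|_{L^a\to L^b}\leq \|K_\lambda\|_{L^a\to L^2}\,\|K_\lambda VK_\lambda\|_{HS}^{m}\,\|K_\lambda\|_{L^2\to L^b}.
$$

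Since $K_\lambda$ is the Fourier multiplier with symbol $(\lambda-k^2)^{-1/2}$, the Hausdorff--Young theorem together with the multiplier identity (\ref{mult}) gives $\|K_\lambda\|_{L^a\to L^2}\lesssim A(\lambda,r_1)^{1/2}$ and $\|K_\lambda\|_{L^2\to L^b}\lesssim A(\lambda,r_2)^{1/2}$, where $1/r_1=2/a-1$ and $1/r_2=1-2/b$. Note that $1\leq a<2<b\leq\infty$ forces $r_1\geq1$, $r_2\geq1$, and that $\frac12(1/r_1+1/r_2)=1/a-1/b$, so the hypothesis $1/a-1/b<1$ will be exactly the condition controlling the large-$|y|$ tail. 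As in Proposition~\ref{propBN} (see (\ref{two})), Lemmas~\ref{lemBE}--\ref{lemBR} yield $\|K_\lambda VK_\lambda\|_{HS}\leq 1/2$ on $\Lambda_N$ for large $N$, hence $\sum_{m\geq2}\|K_\lambda VK_\lambda\|_{HS}^m\lesssim \|K_\lambda VK_\lambda\|_{HS}^2\leq\psi_N(y)$ by (\ref{Z}) and (\ref{4.33}). Therefore
$$
\|B_N\|_{L^a\to L^b}\lesssim\int_{\mathbb{R}}A(\lambda,r_1)^{1/2}A(\lambda,r_2)^{1/2}\,\psi_N(y)\,dy.
$$

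I would then estimate this integral over the four regions $|y|\leq N$, $N\leq|y|\leq N^2$, $N^2\leq|y|\leq N^8$, $|y|>N^8$, feeding in (\ref{4.34}) and (\ref{4.36}) for $\psi_N$ and the Appendix bounds (Corollary~\ref{cor17A}, together with (\ref{eca})--(\ref{ecb})) for the $A$-factors. The decisive improvement over the $L^1\to L^\infty$ case is that, because $r_1,r_2>1$ when $a>1$ and $b<\infty$, the product $A(\lambda,r_1)^{1/2}A(\lambda,r_2)^{1/2}$ behaves like $N^{-1}$ near the real axis instead of $(\log N)/N$, which erases the $(\log N)^2$ factors in the regions $|y|\lesssim N^2$ and leaves only $\frac{\|q\|^2}{N}+(\mathcal{E}_{\sqrt N}(q))^2$; away from the axis one has $A(\lambda,r)\sim|y|^{1/(2r)-1}$, so the product decays like $|y|^{\frac12(1/a-1/b)-1}$.

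The main obstacle is the tail $|y|>N^2$. There the extra decay is essential: combined with $a_N(y)\sim|y|^{-1/2}$ from $\psi_N$, the integrand behaves like $(\mathcal{E}_{|y|^{1/4}}(q))^2\,|y|^{\frac12(1/a-1/b)-3/2}$, whose exponent is strictly below $-1$ \emph{precisely because} $1/a-1/b<1$. This is exactly what allows the tail to be summed and absorbed into $(\mathcal{E}_{\sqrt N}(q))^2$, rather than producing the divergent term $\int_{N^2}^\infty t^{-1}(\mathcal{E}_t(q))^2\,dt$ that occurs at the degenerate endpoint in (\ref{ec24a}). Collecting the contributions of the four regions gives (\ref{ec101}); the Dirichlet case is handled identically, with $q$ replaced by $\tilde q$ throughout via (\ref{4.30a}).
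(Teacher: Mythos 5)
Your proposal is correct and follows essentially the same route as the paper's proof: the same reduction of $B_N$ to an integral over $\Lambda_N$, the same factorization $R^0_\lambda(VR^0_\lambda)^m=K_\lambda(K_\lambda VK_\lambda)^mK_\lambda$ with $\|K_\lambda\|_{L^a\to L^2}\lesssim A^{1/2}\bigl(\lambda,\tfrac{a}{2-a}\bigr)$ and $\|K_\lambda\|_{L^2\to L^b}\lesssim A^{1/2}\bigl(\lambda,\tfrac{\beta}{2-\beta}\bigr)$ (which coincide with your $r_1,r_2$ since $1/r_1=2/a-1$, $1/r_2=1-2/b$), the same geometric-series reduction to $\psi_N(y)$ via (\ref{two}), (\ref{Z}) and (\ref{4.33}), and the same region-by-region integration, the decisive exponent count $\tfrac12(1/r_1+1/r_2)=1/a-1/b<1$ being exactly the paper's condition $\gamma=1-1/a+1/b>0$. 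The only cosmetic difference is that the paper uses three regions and only (\ref{4.34}), never invoking (\ref{4.36}) on $|y|>N^8$, because the extra decay of the $A$-factors already renders $|y|^{-1-\gamma/2}$ integrable at infinity.
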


\begin{proof}
We provide details only in the case $bc = Per^\pm $ because the proof
for $bc=Dir $ is the same.

By (\ref{e1}), as in the proof of Proposition~\ref{propBN}, it
follows that
$$ \|B_N: L^a \to L^b\| \leq \int_{\Lambda_N \cup
\Lambda^-_N} \sum_{m=2}^\infty   \|R^0_\lambda (VR^0_\lambda)^m: L^a
\to L^b \|  dy.$$ By (\ref{4.24}), $R^0_\lambda (VR^0_\lambda)^m
=K_\lambda (K_\lambda VK_\lambda)^m K_\lambda,$ so we have
\begin{equation}
\label{ec81} \|R^0_\lambda (VR^0_\lambda)^m \|_{L^a \to L^b} \leq
\|K_\lambda\|_{L^a \to L^2} \|K_\lambda VK_\lambda\|^m_{L^2 \to L^2}
\|K_\lambda\|_{L^2 \to L^b}.
\end{equation}
Recall that $K_\lambda $ is defined by (\ref{4.25}) as a multiplier
operator in the sequence spaces of Fourier coefficients. If $f\in L^a
$ and $(f_k)$ is its sequence of Fourier coefficients about $\{u_k
(x)\}$ -- one of our canonical o.n.b. (\ref{0.011}), (\ref{0.012}) --
then $K_\lambda f = \sum_k \frac{1}{(\lambda -k^2)^{1/2}} f_k u_k
(x). $ By Hausdorff-Young Theorem $(f_k) \in \ell^\alpha $ with $
\frac{1}{a} + \frac{1}{\alpha} =1, $ and $\|(f_k)\|_{\ell^\alpha}
\leq 2\|f\|_{L^a}.$ The H\"older inequality implies (compare with
(\ref{mult})) that
$$
\|K_\lambda: L^a \to L^2\| \lesssim  \left ( \sum_k \frac{1}{|\lambda
- k^2|^{\frac{a}{2-a}}} \right )^{\frac{2-a}{2a}}, \quad  1\leq a <2.
$$
By duality argument, $\|K_\lambda: L^2 \to L^b\|=\|K_\lambda: L^\beta
\to L^2\|, $ where $\frac{1}{\beta} + \frac{1}{b} =1, $ so it follows
that
$$
\|K_\lambda: L^2 \to L^b\| \lesssim  \left ( \sum_k \frac{1}{|\lambda
- k^2|^{\frac{\beta}{2-\beta}}} \right )^{\frac{2-\beta}{2\beta}},
\quad 2< b \leq \infty, \;\; \frac{1}{\beta} =1- \frac{1}{b}.
$$
Therefore, in view of (\ref{a2}) we have
$$
\|K_\lambda\|_{L^a \to L^2} \lesssim A^{1/2} \left
(\lambda,\frac{a}{2-a} \right), \quad \|K_\lambda\|_{L^2 \to L^b}
\lesssim A^{1/2} \left (\lambda, \frac{\beta}{2-\beta} \right ).
$$

Since the Hilbert-Schmidt norm dominates the $L^2 $-norm,
(\ref{ec81}) and the above formulas imply that
$$ \sum_{m=2}^\infty \|R^0_\lambda (VR^0_\lambda)^m \|_{L^a \to
L^b} \leq  S_1 (\lambda), $$ where
$$S_1 (\lambda):=
 A^{1/2} \left
(\lambda,\frac{a}{2-a} \right) A^{1/2} \left (\lambda,
\frac{\beta}{2-\beta} \right ) \sum_{m=2}^\infty \|K_\lambda
VK_\lambda\|^m_{HS}.
$$

As in the proof of Proposition~\ref{propBN}, by (\ref{4.30}) one can
easily see that $\int_{\Lambda_N^-} S_1 (\lambda)dy \leq
\int_{\Lambda_N} S_1 (\lambda)dy.$ Therefore,
 $$\|B_N \|_{L^a \to L^b} \lesssim
\int_{\Lambda_N } S_1 (\lambda) dy. $$

In view of (\ref{two}),
 for large enough $N$ we have
$\|K_\lambda VK_\lambda\|_{HS} < 1/2 $  for $\lambda \in \Lambda_N,
$ so
$$
\sum_{m=2}^\infty \|K_\lambda VK_\lambda\|^m_{HS} \leq  2\|K_\lambda
VK_\lambda\|^2_{HS}, \quad N \geq N_*.
$$
Thus, by (\ref{Z}) and (\ref{4.33}),  we obtain $S_1 (\lambda) \leq
\Phi_N (y) $ with
\begin{equation}
\label{e4}  \Phi_N (y) := A^{\frac{1}{2}} \left
(\lambda,\frac{a}{2-a} \right) A^{\frac{1}{2}} \left (\lambda,
\frac{\beta}{2-\beta} \right ) \, \psi_N (y), \quad \lambda =N^2
+N+iy.
\end{equation}
In view of the above formulas,
\begin{equation}
\label{ecBNa} \|B_N \|_{L^a \to L^b} \lesssim \int_{\mathbb{R}}
 \Phi_N (y) dy \lesssim I_1  +I_2 +I_3, \quad N \geq N_*,
\end{equation}
 where $$ I_1 =
\int_{|y|\leq N} \Phi_N (y) dy, \quad  I_2 = \int_{N\leq |y|\leq
N^2}\Phi_N (y) dy, \quad I_3 = \int_{|y|\geq N^2 } \Phi_N (y) dy.$$
Next we estimate these integrals.

 If $|y| \leq N $ then by  (\ref{eca}), (\ref{ecb}) and
 (\ref{a56})  we have
 $$
 a_N (y) \lesssim \frac{\log N}{N}, \quad
 b_N (y) \lesssim \frac{1}{N^2}, \quad
A(\lambda, r) \lesssim \frac{1}{N} \;\; \forall r\geq 1.
 $$
 Therefore, from  (\ref{4.34}), (\ref{4.35}) and (\ref{e4})
 it follows that
 $$
\Phi_N (y)  \lesssim  \frac{1}{N}  \left ( \frac{\|q\|^2}{N} +
(\mathcal{E}_{\sqrt{N}} (q))^2  \right )+ (\mathcal{E}_{4N} (q))^2
 \frac{\log N}{N^2},
 $$
so we obtain
\begin{equation}
\label{I1a} I_1 \lesssim \frac{\|q\|^2}{N} + (\mathcal{E}_{\sqrt{N}}
(q))^2 +(\mathcal{E}_{4N} (q))^2
 \frac{\log N}{N}\lesssim \frac{\|q\|^2}{N} +
 (\mathcal{E}_{\sqrt{N}}(q))^2.
 \end{equation}

Next we estimate $I_2.$ If $\lambda = N^2+N+iy $ with $N \leq |y|
\leq N^2,$  then (\ref{eca}), (\ref{ecb}) and  (\ref{a56}) imply that
$$ a_N (y) \lesssim  \frac{1}{N} \log \left (1+\frac{N^2}{|y|} \right
), \quad b_N (y) \lesssim \frac{1}{N|y|}, \quad A(\lambda, r)
\lesssim N^{-\frac{1}{r}} |y|^{-1+\frac{1}{r}}.$$ Therefore, $$
A^{1/2} \left (\lambda,\frac{a}{2-a} \right) A^{1/2} \left (\lambda,
\frac{\beta}{2-\beta} \right ) \lesssim
N^{1-\frac{1}{a}-\frac{1}{\beta}}
|y|^{-2+\frac{1}{a}+\frac{1}{\beta}} =\frac{1}{N} N^\gamma
|y|^{-\gamma},
$$ where $$ 0< \gamma := 2-1/a - 1/\beta =
 1-1/a +1/b <1 $$ due to (\ref{ec100}).

 Now
from (\ref{4.34}),  (\ref{4.35}) and (\ref{e4}) we obtain $$ \Phi_N
(y) \lesssim N^\gamma |y|^{-1-\gamma} \left (\frac{\|q\|^2}{N} +
(\mathcal{E}_{\sqrt{N}} (q))^2 \right ) + (\mathcal{E}_{4N} (q))^2
\frac{|y|^{-\gamma}}{N^{2-\gamma}} \log \left (1+\frac{N^2}{|y|}
\right ), $$
 Since $\int_N^{N^2} y^{-1-\gamma} dy \lesssim
N^{-\gamma}$ and (with the change of variable $t=N^2/y $) $$
\int_N^{N^2} y^{-\gamma} \log (1+N^2/y)dy =N^{2-2\gamma} \int_0^N
\frac{1}{t^{2-\gamma}} \log(1+t) dt \lesssim N^{2-2\gamma}, $$ it
follows that
\begin{equation}
\label{I2a} I_2 \lesssim  \|q\|^2/N + (\mathcal{E}_{\sqrt{N}}
(q))^2 + N^{-\gamma} (\mathcal{E}_{4N} (q))^2.
 \end{equation}

Finally, we estimate $I_3.$ For
 $\lambda = N^2+N+iy $
with $ |y| \geq N^2$ we have by (\ref{eca}), (\ref{ecb}) and
(\ref{a56})  that $$ a_N (y) \lesssim \frac{1}{|y|^{1/2}}, \quad b_N
(y) \lesssim \frac{1}{|y|^{3/2}}, \quad A(\lambda, r) \lesssim
|y|^{-1+\frac{1}{2r}}. $$ Therefore, $$ A^{1/2} \left
(\lambda,\frac{a}{2-a} \right) A^{1/2} \left (\lambda,
\frac{\beta}{2-\beta} \right ) \lesssim
 |y|^{-\frac{3}{2}+\frac{1}{2a}+\frac{1}{2\beta}}
= |y|^{-\frac{1}{2}-\frac{\gamma}{2}}, $$ so (\ref{4.34}),
(\ref{4.35}) and (\ref{e4})  imply that $$ \Phi_N (y) \lesssim N^2
|y|^{-2 - \gamma/2} \left ( \frac{\|q\|^2}{N} +
(\mathcal{E}_{\sqrt{N}} (q))^2 \right ) + |y|^{-1 - \gamma/2}
(\mathcal{E}_{4N} (q))^2.
$$
Now, integrating over $|y| \geq N^2, $  we obtain
\begin{equation}
\label{I3a} I_3 \lesssim  \left ( \frac{\|q\|^2}{N} +
(\mathcal{E}_{\sqrt{N}} (q))^2 \right ) N^{-\gamma} +
(\mathcal{E}_{4N} (q))^2 N^{-\gamma}.
 \end{equation}
The estimates
 (\ref{I1a}), (\ref{I2a}) and (\ref{I3a}) yield
(\ref{ec101}), which completes the proof.

\end{proof}
\bigskip

\section{Appendix: Auxiliary Inequalities}

In this section we collect a few inequalities that justify crucial
steps in the proof of our main results and could be useful elsewhere.

The elementary inequality
\begin{equation}
\label{a1} (a+b)^\tau \leq 2^{\tau -1} (a^\tau + b^\tau ), \quad a,b
>0, \;\; \tau \geq 1,
\end{equation}
will be used throughout the text often without any specification. Of
course, (\ref{a1}) explains that for every fixed $\tau >0 $
$$ (a+b)^\tau \sim a^\tau + b^\tau,  \quad a,b >0.
$$

Next, for fixed $r \geq 1, $  we analyze the  behavior of the
function
\begin{equation}
\label{a2} A(z,r) = \left ( \sum_{k=0}^\infty \frac{1}{|z-k^2|^r}
\right )^{1/r}, \quad  r\geq 1, \;\;  z\in \mathbb{C}.
\end{equation}
We need estimates of this function and its integrals on properly
chosen contours in $\mathbb{C}.$ \vspace{3mm}

1. Horizontal lines $z=x+ih, \; x\in \mathbb{R}.$ \\
Since $A(x+ih,r)=A(x+i|h|,r), $ we assume for simplicity of the
writing that $h>0.$ If $z= x+ih $ then
\begin{equation}
\label{a3} \frac{1}{2}(|x-k^2|+h)\leq |z-k^2| \leq  |x-k^2|+h
\end{equation}
and
\begin{equation}
\label{a4} |z-k^2|^r \geq  2^{-r}(|x-k^2|+h)^r.
\end{equation}
Therefore,
\begin{equation}
\label{a5}  [A(x+ih,r)]^r \leq 2^r \sum_0^\infty
\frac{1}{(|x-k^2|+h)^r} =2^r ( \sigma_0^r + \sigma_1^r ),
\end{equation}
where
\begin{equation}
\label{a5a} \sigma_0^r=  \sum_0^{2N} \frac{1}{(|x-k^2|+h)^r} \leq
(2N+1) \cdot h^{-r}
\end{equation}
and
\begin{equation}
\label{a5b} \sigma_1^r = \sum_{2N+1}^\infty \frac{1}{(|x-k^2|+h)^r}.
\end{equation}

If
\begin{equation}
\label{a6} |x| \leq N^2 +N,
\end{equation}
and $k \geq 2N+1,$ then
\begin{equation}
\label{a7} |x-k^2| +h \geq  \frac{1}{2}(k^2 +h)\geq
\frac{1}{4}(k+\sqrt{h})^2.
\end{equation}
Indeed, if $x \leq 0,$ then (\ref{a7}) is obvious. If $0<x \leq
N^2+N, $ then $|x-k^2|  \geq k^2- (N^2+N) \geq  \frac{1}{2} k^2$
because $k \geq 2N+1.$

Therefore, if (\ref{a6}) holds, then
\begin{equation}
\label{a8} \sigma_1^r \leq \sum_{2N+1}^\infty
\frac{4^r}{(k+\sqrt{h})^{2r}} \leq \int_{2N}^\infty
\frac{4^r}{(\xi+\sqrt{h})^{2r}}d\xi = \frac{4^r}{2r-1} \left
(2N+\sqrt{h}\right )^{-2r+1}.
\end{equation}
Now, in view of (\ref{a5}), (\ref{a5a}) and (\ref{a8}), it follows
that
$$
A(x+ih,r) \leq 2(\sigma_0^r + \sigma_1^r)^{1/r} \lesssim
\frac{N^{1/r}}{h} + \frac{1}{(N^2 +h)^{1-1/(2r)}} \lesssim
\frac{1}{h} \left ( N^{1/r} +h^{\frac{1}{2r}}  \right ).
$$
The above inequalities imply the following.

\begin{Lemma}
\label{lem405} If $\; |x| \leq N^2 +N $ and $\; h \geq N^2,$ then
\begin{equation}
\label{a17} A(x \pm ih,r)\lesssim h^{\frac{1}{2r}-1}
\end{equation}
and
\begin{equation}
\label{a17a} \int_{-\omega}^{N^2+N} (A(x\pm ih,r))^m dx \lesssim
(N^2+\omega)\cdot  h^{-m(1-\frac{1}{2r})}, \quad m>0.
\end{equation}
If $m \geq 1,$ then
\begin{equation}
\label{a17b} \lim_{h\to \infty} \int_{-\omega}^{N^2+N} (A(x\pm
ih,r))^m dx =0.
\end{equation}
\end{Lemma}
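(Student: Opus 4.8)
The plan is to read off all three claims from the uniform-in-$x$ estimate established immediately before the statement, namely
$$ A(x+ih,r) \lesssim \frac{1}{h}\left( N^{1/r} + h^{\frac{1}{2r}} \right), \qquad |x| \le N^2+N, $$
which was obtained from (\ref{a5}), (\ref{a5a}) and (\ref{a8}). Since $|x+ih-k^2|=|x-ih-k^2|$, one has $A(x+ih,r)=A(x-ih,r)$, so throughout it suffices to treat the $+$ sign; the $\pm$ in the statement then follows automatically.

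First I would prove (\ref{a17}). The hypothesis $h\ge N^2$ gives $N\le \sqrt{h}$, whence $N^{1/r}\le h^{1/(2r)}$. Substituting this into the displayed bound collapses the two terms,
$$ A(x+ih,r) \lesssim \frac{1}{h}\left( h^{\frac{1}{2r}} + h^{\frac{1}{2r}} \right) \lesssim h^{\frac{1}{2r}-1}, $$
uniformly for $|x|\le N^2+N$, which is exactly (\ref{a17}).

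Next I would obtain (\ref{a17a}) by integrating. Because the bound in (\ref{a17}) does not depend on $x$, raising it to the $m$-th power and integrating over the interval $[-\omega, N^2+N]$ merely multiplies by its length:
$$ \int_{-\omega}^{N^2+N}\bigl(A(x\pm ih,r)\bigr)^m\,dx \lesssim (N^2+N+\omega)\, h^{-m\left(1-\frac{1}{2r}\right)}. $$
Since $N\le N^2$ for $N\ge 1$, one has $N^2+N+\omega \le 2(N^2+\omega)$, so the prefactor is $\lesssim N^2+\omega$, giving (\ref{a17a}).

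Finally, (\ref{a17b}) is immediate from (\ref{a17a}). For $r\ge 1$ we have $\tfrac{1}{2r}\le\tfrac12$, hence $1-\tfrac{1}{2r}\ge\tfrac12>0$; thus for $m\ge 1$ the exponent $-m\left(1-\tfrac{1}{2r}\right)$ is strictly negative. With $N$ and $\omega$ held fixed the prefactor $N^2+\omega$ is a constant, so the right-hand side of (\ref{a17a}) tends to $0$ as $h\to\infty$. I expect no genuine obstacle here: all the analytic work — splitting the sum defining $A$ into the near block $0\le k\le 2N$ and the tail $k\ge 2N+1$ and estimating each piece — was already carried out in (\ref{a5a})--(\ref{a8}); the lemma only records the two consequences actually used later, so the single point demanding care is balancing $N^{1/r}$ against $h^{1/(2r)}$ under the regime $h\ge N^2$.
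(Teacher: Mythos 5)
Your proposal is correct and follows exactly the route the paper intends: the paper derives the uniform bound $A(x+ih,r)\lesssim \frac{1}{h}\bigl(N^{1/r}+h^{\frac{1}{2r}}\bigr)$ in (\ref{a5})--(\ref{a8}) and then simply states that "the above inequalities imply" the lemma, and your three steps (absorbing $N^{1/r}$ into $h^{1/(2r)}$ via $h\ge N^2$, integrating the $x$-independent bound over the interval of length $N^2+N+\omega$, and noting the exponent $-m(1-\tfrac{1}{2r})\le -m/2<0$) are precisely the omitted details. No gaps.
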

\bigskip

2. Vertical lines $z= - \omega +iy, \; \omega >>1.$

In this case $ |z-k^2|\sim k^2 +\omega + |y| \sim \left
(k+\sqrt{\omega +|y|}\right)^2. $ Hence, for every fixed $r\geq 1$
we have
\begin{equation}
\label{a18}  A^r \sim  \sum_{k=0}^\infty \frac{1}{\left
(k+\sqrt{\omega +|y|}\right)^{2r}} \sim \int_0^\infty
 \frac{1}{\left
(x+\sqrt{\omega +|y|}\right)^{2r}} dx \lesssim
\frac{1}{(\omega+|y|)^{r-\frac{1}{2}}}.
\end{equation}
Therefore, the following holds.

\begin{Lemma}
\label{lem407} For fixed $r\geq 1, \; \omega >0$
\begin{equation}
\label{a19}  A(-\omega+iy, r)  \lesssim \left ( \frac{1}{\omega+|y|}
\right )^{1-\frac{1}{2r}}.
\end{equation}
Moreover, if $r>1$  then
\begin{equation}
\label{a20}  \int_{-\infty}^\infty A^2(-\omega+iy, r)dy \to 0 \quad
\text{as} \;\; \omega \to \infty,
\end{equation}
and if $r=1 $
\begin{equation}
\label{a21}  \int_{-\infty}^\infty A^3(-\omega+iy, 1)dy \to 0 \quad
\text{as} \;\; \omega \to \infty.
\end{equation}

\end{Lemma}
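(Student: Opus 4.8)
The plan is to read off the pointwise bound (\ref{a19}) directly from the chain (\ref{a18}) already displayed above the statement, and then to obtain the two integral claims (\ref{a20}) and (\ref{a21}) by substituting (\ref{a19}) and evaluating one elementary power integral in $y$.

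To justify (\ref{a18}) and hence (\ref{a19}): for $z=-\omega+iy$ we have $|z-k^2|=\sqrt{(k^2+\omega)^2+y^2}\sim (k^2+\omega)+|y|$, using $\sqrt{s^2+t^2}\sim s+t$ for $s,t\ge 0$; combined with $k^2+(\omega+|y|)\sim(k+\sqrt{\omega+|y|})^2$ (which follows from $2k\sqrt{\omega+|y|}\le k^2+(\omega+|y|)$) this gives $|z-k^2|\sim(k+\sqrt{\omega+|y|})^2$ uniformly in $k$. Since $(k+\sqrt{\omega+|y|})^{-2r}$ decreases in $k$, the defining sum of $A^r$ is comparable to $\int_0^\infty (x+\sqrt{\omega+|y|})^{-2r}\,dx=\tfrac{1}{2r-1}(\omega+|y|)^{1/2-r}$, which is exactly (\ref{a18}); raising to the power $1/r$ yields (\ref{a19}).

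For (\ref{a20}) with $r>1$, squaring (\ref{a19}) gives $A^2(-\omega+iy,r)\lesssim(\omega+|y|)^{-(2-1/r)}$, and with $s:=2-1/r>1$
$$\int_{-\infty}^\infty A^2(-\omega+iy,r)\,dy\lesssim 2\int_0^\infty\frac{dy}{(\omega+y)^s}=\frac{2}{s-1}\,\omega^{1-s}=\frac{2}{s-1}\,\omega^{1/r-1}\to 0\quad(\omega\to\infty),$$
since $1/r-1<0$. For (\ref{a21}) the same computation with the cube applies: at $r=1$, (\ref{a19}) reads $A(-\omega+iy,1)\lesssim(\omega+|y|)^{-1/2}$, so $A^3\lesssim(\omega+|y|)^{-3/2}$ and $\int_{-\infty}^\infty(\omega+|y|)^{-3/2}\,dy=4\,\omega^{-1/2}\to 0$.

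The argument is routine; the only delicate point is that the $y$-integral converges only when the exponent of $(\omega+|y|)$ strictly exceeds $1$. This is exactly why the square is enough for $r>1$ but breaks down at $r=1$, where $2(1-\tfrac{1}{2r})=1$ makes $\int(\omega+|y|)^{-1}\,dy$ divergent; passing to the cube restores the exponent $3(1-\tfrac12)=\tfrac32>1$ and produces the decaying factor $\omega^{-1/2}$, which is what forces the $r=1$ case of the lemma to be stated with $A^3$ rather than $A^2$.
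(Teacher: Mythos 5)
Your argument is correct and follows the paper's own route: the paper derives (\ref{a19}) from exactly the chain of equivalences in (\ref{a18}) (comparing the sum to $\int_0^\infty (x+\sqrt{\omega+|y|})^{-2r}\,dx$), and the integral claims (\ref{a20}), (\ref{a21}) are then the same elementary power integrals in $y$ that you compute. Your closing remark about why the exponent must exceed $1$ (hence $A^3$ rather than $A^2$ at $r=1$) matches the role this lemma plays, cf.\ (\ref{111}).
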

\bigskip

3. Now we analyze $A(z,r) $ on the vertical line
\begin{equation}
\label{a27}  \Lambda_N =\{\lambda = N^2+N+iy, \; y \in \mathbb{R}\}.
\end{equation}
Since $A(N^2+N+iy, r)=A(N^2+N+i|y|, r),$ we assume for simplicity
that $y\geq 0.$

One can easily see for $\lambda = N^2+N+iy $ that
\begin{equation}
\label{a28} |\lambda -k^2|\sim |N^2-k^2|+N+y.
\end{equation}
Therefore, in view of (\ref{a2})
\begin{equation}
\label{a29} A(\lambda,r) \sim \sigma_0 + \sigma_1   \quad \text{for}
\;\;  \lambda \in \Lambda_N,
\end{equation}
where
\begin{equation}
\label{a30} (\sigma_0)^r =\sum_{k=0}^{2N}
\frac{1}{(|N^2-k^2|+N+y)^r}, \quad (\sigma_1)^r=\sum_{k=2N+1}^\infty
\frac{1}{(|N^2-k^2|+N+y)^r}.
\end{equation}

Next we estimate $(\sigma_0)^r.$ Since $|N^2-k^2| =|N-k|(N+k) \sim
|N-k|N $ if $0 \leq k \leq 2N, $ it follows that $$ (\sigma_0)^r
\sim \sum_{j=0}^N \frac{1}{(Nj+N+y)^r} \sim \int_0^{N}
\frac{1}{(N\xi+N+y)^r} d\xi.
$$
 $$ \qquad \sim \begin{cases} \frac{1}{N}\log \left
(1+\frac{N^2}{N+y}\right )  & \text{if} \;\; r=1, \vspace{1mm}\\
\frac{1}{N} \left (\frac{1}{(N+y)^{r-1}}-\frac{1}{(N^2+N+y)^{r-1}}
\right )  & \text{if} \;\; r>1.
\end{cases}
$$ Therefore, by using Mean Value Theorem in the case $
r>1, \, y>N^2, $ we obtain the following.

\begin{Lemma}
\label{lem410} In the above notations,
\begin{equation}
\label{a37} \sigma_0 \sim  \frac{1}{N}\log \left
(1+\frac{N^2}{N+y}\right )  \quad \text{if} \;\; r=1,
\end{equation}
and for $r>1$
\begin{equation}
\label{a38} \sigma_0 \sim
\begin{cases} \frac{1}{N^{1/r}}
\cdot \frac{1}{(N+y)^{1-1/r}} & \text{if} \quad  0\leq y \leq N^2,
\vspace{1mm}\\  \frac{N^{1/r}}{y}  & \text{if} \quad y >N^2.
\end{cases}
\end{equation}
\end{Lemma}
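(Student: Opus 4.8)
The plan is to read off $\sigma_0$ from the closed form for $(\sigma_0)^r$ already obtained just before the statement, by taking $r$-th roots after isolating the dominant term in each range of $y$. The case $r=1$ requires nothing further: there $\sigma_0=(\sigma_0)^r$, so the computed value $\frac1N\log(1+\frac{N^2}{N+y})$ is exactly (\ref{a37}). For $r>1$ the starting point is
$$(\sigma_0)^r \sim \frac{1}{N}\left(\frac{1}{(N+y)^{r-1}}-\frac{1}{(N^2+N+y)^{r-1}}\right),$$
and everything comes down to understanding the difference of the two reciprocal powers, which behaves in two qualitatively different ways depending on whether $y\le N^2$ or $y>N^2$.

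In the range $0\le y\le N^2$ I would show the first term dominates. Writing the ratio of the bases as $\frac{N+y}{(N+y)+N^2}$ and using $y\le N^2$ gives $\frac{N+y}{N^2+N+y}\le \frac{N+N^2}{N+2N^2}\le \frac23$, uniformly in $N\ge 1$, so that
$$\frac{1}{(N+y)^{r-1}}-\frac{1}{(N^2+N+y)^{r-1}} = \frac{1}{(N+y)^{r-1}}\left(1-\Bigl(\tfrac{N+y}{N^2+N+y}\Bigr)^{r-1}\right)\sim \frac{1}{(N+y)^{r-1}},$$
the bracket lying between $1-(2/3)^{r-1}>0$ and $1$. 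Hence $(\sigma_0)^r\sim \frac{1}{N(N+y)^{r-1}}$, and taking $r$-th roots yields $\sigma_0\sim N^{-1/r}(N+y)^{-(1-1/r)}$, which is the first line of (\ref{a38}).

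The regime $y>N^2$ is the delicate step and the one I expect to be the main obstacle, since here the two terms nearly cancel and neither may be dropped: because $N^2<y$ forces $N+y\sim y$ and $N^2+N+y\sim y$, both reciprocal powers are comparable to $y^{-(r-1)}$. I would extract the true size of their difference via the Mean Value Theorem applied to $f(t)=t^{-(r-1)}$, giving
$$\frac{1}{(N+y)^{r-1}}-\frac{1}{(N^2+N+y)^{r-1}} = (r-1)\,\xi^{-r}\,N^2,\qquad \xi\in(N+y,\,N^2+N+y).$$
Since every point of this interval is $\sim y$, one has $\xi^{-r}\sim y^{-r}$, so $(\sigma_0)^r\sim \frac1N\cdot\frac{N^2}{y^r}=\frac{N}{y^{r}}$, and taking $r$-th roots gives $\sigma_0\sim N^{1/r}/y$, the second line of (\ref{a38}). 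The only care needed throughout is that the implied constants in the $\sim$ relations depend on $r$ alone, which is clear from the factors $(2/3)^{r-1}$ and $r-1$.
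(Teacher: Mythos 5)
Your proposal is correct and follows essentially the same route as the paper, which derives the closed form for $(\sigma_0)^r$ immediately before the lemma and then notes that the case $r>1$, $y>N^2$ is handled "by using Mean Value Theorem." You have simply filled in the details the paper leaves implicit (the domination of the first term when $0\leq y\leq N^2$ and the explicit MVT computation), and those details check out.
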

\bigskip

4. Now we estimate the sum $ (\sigma_1)^r $ defined in (\ref{a30}).
If $k \geq 2N+1 $ then $$k^2 -N^2 +N +y \sim k^2 +y \sim (k
+\sqrt{y})^2,$$ so
$$
 (\sigma_1)^r \sim \sum_{2N+1}^\infty \frac{1}{(k
+\sqrt{y})^{2r}}
 \sim
\int_{2N}^\infty \frac{1}{(\xi +\sqrt{y})^{2r}} d\xi \sim
(2N+\sqrt{y})^{-2r+1}.
$$
Therefore, we obtain
\begin{equation}
\label{a53} \sigma_1 \sim (N^2+y)^{-1 +\frac{1}{2r}}.
\end{equation}
This, together with Lemma~\ref{lem410}, (\ref{a37}) and (\ref{a38}),
leads us to the following.

\begin{Lemma}
\label{lemA}  With notations (\ref{a2}), we have
\begin{equation}
\label{a55} A(N^2+N+iy,1)
 \sim \begin{cases}  \frac{\log N}{N}    &
\text{if} \quad   |y| \leq N,\\ \frac{1}{N} \log
(1+\frac{N^2}{|y|}) & \text{if} \quad   N \leq |y| \leq N^2,\\
\frac{1}{\sqrt{|y|}}   & \text{if} \quad  |y| \geq N^2,
\end{cases}
\end{equation}
and, for $r>1,$
\begin{equation}
\label{a56} A(N^2+N+iy,r) \sim \begin{cases}  \frac{1}{N} & \text{if}
\quad   |y| \leq N,\\ N^{-\frac{1}{r}} \,
|y|^{-1+\frac{1}{r}} & \text{if} \quad N \leq |y| \leq N^2,\\
|y|^{-1+\frac{1}{2r}}   & \text{if} \quad |y| \geq N^2.
\end{cases}
\end{equation}
\end{Lemma}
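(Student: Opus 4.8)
The plan is to read the result off directly from the three ingredients already assembled immediately above the statement: the equivalence $A(\lambda,r)\sim\sigma_0+\sigma_1$ on $\Lambda_N$ from (\ref{a29})--(\ref{a30}), the asymptotics for $\sigma_0$ furnished by Lemma~\ref{lem410}, and the estimate $\sigma_1\sim(N^2+y)^{-1+\frac{1}{2r}}$ from (\ref{a53}). By symmetry in $y$ I may assume $y\ge 0$. Since $A\sim\sigma_0+\sigma_1$, in each of the three windows $y\le N$, $N\le y\le N^2$, $y\ge N^2$ I will simplify both $\sigma_0$ and $\sigma_1$ and decide which dominates; the stated value is then $\max(\sigma_0,\sigma_1)$ up to constants, and the two contributions turn out to cross at $y\sim N^2$, which is what makes the piecewise formulas join at the window boundaries.

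Take first $r=1$, where $\sigma_0\sim\frac{1}{N}\log\bigl(1+\frac{N^2}{N+y}\bigr)$ and $\sigma_1\sim(N^2+y)^{-1/2}$. For $y\le N$ one has $N+y\sim N$ and $N^2+y\sim N^2$, so $\sigma_0\sim\frac{\log N}{N}$ and $\sigma_1\sim\frac{1}{N}$; hence $\sigma_0$ wins and $A\sim\frac{\log N}{N}$. For $N\le y\le N^2$ one has $N+y\sim y$ and $N^2+y\sim N^2$, so $\sigma_0\sim\frac{1}{N}\log(1+\frac{N^2}{y})$ while $\sigma_1\sim\frac{1}{N}$; since $\log(1+\frac{N^2}{y})\ge\log 2$ throughout (because $N^2/y\ge 1$), one has $\sigma_1\lesssim\sigma_0$ and $A\sim\frac{1}{N}\log(1+\frac{N^2}{y})$. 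For $y\ge N^2$ one has $N^2/y\le 1$, so $\log(1+\frac{N^2}{y})\sim\frac{N^2}{y}$, giving $\sigma_0\sim\frac{N}{y}$, while $\sigma_1\sim y^{-1/2}$; the ratio $\sigma_0/\sigma_1\sim N y^{-1/2}\le 1$ for $y\ge N^2$, so $\sigma_1$ dominates and $A\sim|y|^{-1/2}$.

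Now take $r>1$, where $\sigma_1\sim(N^2+y)^{-1+\frac{1}{2r}}$ and $\sigma_0$ is given by the two lines of (\ref{a38}). For $y\le N$: $\sigma_0\sim N^{-1/r}(N+y)^{-(1-1/r)}\sim N^{-1}$ and $\sigma_1\sim N^{-2+1/r}$, and since $-2+\frac{1}{r}<-1$ we get $\sigma_1\ll\sigma_0$, whence $A\sim\frac{1}{N}$. For $N\le y\le N^2$: $\sigma_0\sim N^{-1/r}y^{-1+1/r}$ and $\sigma_1\sim N^{-2+1/r}$, and the ratio $\sigma_1/\sigma_0\sim N^{-2+2/r}y^{1-1/r}$ is increasing in $y$ (as $1-\frac{1}{r}>0$) and equals $1$ at $y=N^2$, so $\sigma_1\lesssim\sigma_0$ throughout and $A\sim N^{-1/r}|y|^{-1+1/r}$. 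For $y\ge N^2$: $\sigma_0\sim N^{1/r}y^{-1}$ and $\sigma_1\sim y^{-1+\frac{1}{2r}}$, so $\sigma_0/\sigma_1\sim(N^2/y)^{1/(2r)}\le 1$, and $\sigma_1$ dominates, giving $A\sim|y|^{-1+\frac{1}{2r}}$. Collecting the six cases yields (\ref{a55}) and (\ref{a56}).

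I do not expect a genuine obstacle: the lemma is a bookkeeping comparison of two explicit monomial (with a logarithmic factor when $r=1$) quantities across three $y$-windows. The only points needing care are, first, the regime $r=1$, $y\ge N^2$, where the logarithm in $\sigma_0$ must be linearized via $\log(1+x)\sim x$ before it can be weighed against $\sigma_1\sim y^{-1/2}$; and, second, the conceptual observation that in every case the crossover of $\sigma_0$ and $\sigma_1$ occurs precisely at $y\sim N^2$, which guarantees that the piecewise estimates match at the endpoints and that no intermediate regime is overlooked.
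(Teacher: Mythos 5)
Your proposal is correct and follows exactly the route the paper takes: its proof of Lemma~\ref{lemA} is the one-line remark that (\ref{a37}), (\ref{a38}) and (\ref{a53}) combine via (\ref{a29}) to give the stated asymptotics, and your case-by-case comparison of $\sigma_0$ and $\sigma_1$ in the three windows (including linearizing the logarithm for $y\geq N^2$ and noting the crossover at $y\sim N^2$) is precisely the bookkeeping the authors leave to the reader.
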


\begin{proof}
Indeed,  (\ref{a37}) and (\ref{a53}) lead to (\ref{a55}), and
(\ref{a38}) together with (\ref{a53}) imply (\ref{a56}).

\end{proof}

The inequality (\ref{a56}) helps us to give estimates of $
\int_{\Lambda_N}  A^2 (\lambda;r) dy $ from above. We have the
following three cases:

(i) $r>2; $

(i) $r=2; $

(iii) $ 1 < r <2. $

In either case ($r>1$),
\begin{equation}
\label{101} \int_{|y|\leq N } A^2 dy \lesssim  (1/N)^2 \cdot 2N
\lesssim \frac{1}{N}
\end{equation}
and
\begin{equation}
\label{102} \int_{|y|\geq N^2 } A^2 (\lambda, r) dy \lesssim
\int_{N^2}^\infty y^{-2+1/r} dy \lesssim N^{-2(1-1/r)}.
\end{equation}

The integration over the interval $[N, N^2]$ is also easy but the
result depends on $r$ in an essential way. By (\ref{a56}), the
middle line,
\begin{equation}
\label{103} \int_{N \leq |y|\leq N^2 } A^2  (\lambda, r) dy \lesssim
Y(N), \end{equation} where
$$
Y(N) := \int_N^{N^2} N^{-2/r} y^{-2+2/r} dy= \begin{cases} \frac{\log
N}{N} &  \text{if}  \;\; r=2;\\
\frac{r}{2-r}\left ( N^{-2(1-\frac{1}{r})} - N^{-1} \right )
&\text{if}  \;\; r\neq 2.
\end{cases}
$$
Therefore
\begin{equation}
\label{104}
 Y(N) \lesssim
\begin{cases}
 \frac{1}{N}    &  \text{if} \;\; r>2,\\
 \frac{ \log N}{N}  &  \text{if} \;\; r=2,\\
 N^{-2(1-\frac{1}{r})} &  \text{if} \;\; r<2.
\end{cases}
\end{equation}
Now the inequalities (\ref{101})--(\ref{104}) imply the following.

\begin{Corollary}
\label{cor17A} If $1< r < \infty, $ then
\begin{equation}
\label{108} \int_{\Lambda_N}  A^2 (\lambda;r) dy \leq
\begin{cases}
  \frac{1}{N}    &  \text{if} \;\; r>2,\\
 \frac{ \log N}{N}  &  \text{if} \;\; r=2,\\
 N^{-2(1-\frac{1}{r})} &  \text{if} \;\; r<2.
\end{cases}
\end{equation}
\end{Corollary}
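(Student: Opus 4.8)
The plan is to reduce everything to Lemma~\ref{lemA}: the sharp two-sided estimate (\ref{a56}) for $A(N^2+N+iy,r)$ splits the vertical line $\Lambda_N$ into the three regimes $|y|\le N$, $N\le|y|\le N^2$, $|y|\ge N^2$, and in each of them $A(\lambda,r)$ has a clean asymptotic form. Since $A(N^2+N+iy,r)=A(N^2+N+i|y|,r)$, it is enough to integrate over $y\ge 0$ and double, so I would estimate the integral of $A^2$ separately over the three intervals and then add.

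On $|y|\le N$ the estimate (\ref{a56}) gives $A\sim 1/N$, hence $A^2\sim N^{-2}$, and integrating over a set of length comparable to $N$ yields the bound $\lesssim 1/N$ recorded in (\ref{101}). On $|y|\ge N^2$ one has $A\sim |y|^{-1+1/(2r)}$, so $A^2\sim |y|^{-2+1/r}$; because $r>1$ forces the exponent $-2+1/r$ to be strictly below $-1$, the tail integral $\int_{N^2}^\infty y^{-2+1/r}\,dy$ converges and is a constant multiple of $(N^2)^{-1+1/r}=N^{-2(1-1/r)}$, which is (\ref{102}).

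The intermediate regime $N\le|y|\le N^2$ is the only place where the parameter $r$ genuinely matters. Here the middle line of (\ref{a56}) gives $A\sim N^{-1/r}|y|^{-1+1/r}$, so $A^2\sim N^{-2/r}|y|^{-2+2/r}$ and the contribution is $\lesssim Y(N)=\int_N^{N^2}N^{-2/r}y^{-2+2/r}\,dy$. Evaluating this elementary integral separates the borderline case $r=2$, where the primitive is logarithmic, from $r\ne 2$, where it is a difference of powers; this is exactly the three-way bound (\ref{104}), namely $Y(N)\lesssim 1/N$ for $r>2$, $\lesssim(\log N)/N$ for $r=2$, and $\lesssim N^{-2(1-1/r)}$ for $r<2$.

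Finally I would assemble (\ref{101}), (\ref{102}) and (\ref{104}) by selecting, in each range of $r$, whichever of the three contributions is largest. For $r>2$ the inequality $2(1-1/r)>1$ shows that both $N^{-2(1-1/r)}$ and $Y(N)$ are dominated by the $1/N$ term from $|y|\le N$, giving the first line of (\ref{108}). For $r=2$ the three contributions are $1/N$, $(\log N)/N$ and $1/N$, so the logarithmic middle term wins. For $1<r<2$ the exponent satisfies $2(1-1/r)<1$, so $N^{-2(1-1/r)}$ exceeds $1/N$ and controls the sum, giving the last line. I do not expect any real obstacle: the whole argument is a comparison of exponents resting on the already-proven estimate (\ref{a56}), and the single point requiring care is simply tracking which regime dominates as $r$ crosses the value $2$.
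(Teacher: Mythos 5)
Your argument is correct and coincides with the paper's own proof: the paper likewise splits $\Lambda_N$ into the three regimes of Lemma~\ref{lemA}, obtains exactly the bounds (\ref{101}), (\ref{102}) and (\ref{104}), and concludes by comparing the exponents as $r$ crosses $2$. Nothing is missing.
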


Next we consider the case $r=1.$
\begin{Corollary}
\label{cor17B} From (\ref{a55}) it follows that
\begin{equation}
\label{110} \int_{\Lambda_N}  A^3 (\lambda;1) dy \lesssim
\frac{1}{N}.
\end{equation}
\end{Corollary}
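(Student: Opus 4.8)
The plan is to reduce to the half-line $y\ge 0$ (using $A(N^2+N+iy,1)=A(N^2+N+i|y|,1)$, so that $\int_{\Lambda_N}A^3(\lambda;1)\,dy$ is twice the integral over $y\ge 0$) and then to split the range of $y$ into the three intervals appearing in (\ref{a55}), namely $[0,N]$, $[N,N^2]$ and $[N^2,\infty)$, estimating $A^3(\lambda;1)$ on each of them by the corresponding asymptotic supplied by (\ref{a55}).

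On $[0,N]$ we have $A\sim (\log N)/N$, so $\int_0^N A^3\,dy \lesssim ((\log N)/N)^3\cdot N = (\log N)^3/N^2$, which is $\lesssim 1/N$. On the tail $[N^2,\infty)$ we have $A\sim |y|^{-1/2}$, whence $\int_{N^2}^\infty A^3\,dy \lesssim \int_{N^2}^\infty y^{-3/2}\,dy \lesssim 1/N$. Both of these contributions are immediate and require no further work.

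The delicate term is the middle interval $[N,N^2]$, where $A\sim N^{-1}\log(1+N^2/y)$ and therefore $\int_N^{N^2}A^3\,dy \lesssim N^{-3}\int_N^{N^2}\bigl(\log(1+N^2/y)\bigr)^3\,dy$. Here the crude bound $\log(1+N^2/y)\lesssim \log N$ would only yield $(\log N)^3/N$, which is \emph{not} $\lesssim 1/N$; this is the main obstacle. To extract the clean power I would substitute $t=N^2/y$ (so $dy=-N^2 t^{-2}\,dt$, and the limits $y=N,\,y=N^2$ become $t=N,\,t=1$), obtaining $N^{-1}\int_1^N t^{-2}\bigl(\log(1+t)\bigr)^3\,dt$. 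Since $\int_1^\infty t^{-2}\bigl(\log(1+t)\bigr)^3\,dt<\infty$ (the integrand is bounded near $t=1$ and decays like $(\log t)^3/t^2$ at infinity), this contribution is also $\lesssim 1/N$.

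Adding the three pieces gives the asserted bound $\int_{\Lambda_N}A^3(\lambda;1)\,dy \lesssim 1/N$. The only nonroutine step is the change of variables $t=N^2/y$ on $[N,N^2]$: it concentrates the logarithmic weight in the region where $t^{-2}$ is large, converts the integral into a convergent one, and thereby cancels the spurious logarithmic factors that a direct estimate would produce.
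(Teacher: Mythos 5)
Your proof is correct and follows essentially the same route as the paper: the same three-way splitting of $\Lambda_N$ according to (\ref{a55}), with the outer pieces handled by the crude bounds and the middle piece by the change of variables $t=N^2/y$, which is the paper's substitution $w=1+N^2/y$ up to a shift. Nothing further is needed.
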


\begin{proof}
The integral over $[0,N]$  does not exceed $C \frac{(\log N)^3}{N^2},
$ and the integral over $[N^2, \infty) $ is less than $C/N.$ Next,
with $w= 1+N^2/y, \; dy = - \frac{N^2}{(w-1)^2} dw $ we estimate
$\int_N^{N^2} A^3 (\lambda;1) dy $ by
\begin{equation}
\label{111a} \int_N^{N^2} \left (  \frac{1}{N} \log (1+N^2/y) \right
)^3 dy = \frac{1}{N} \int_2^{1+N} \frac{(\log w)^3}{(w-1)^2} dw
\lesssim \frac{1}{N}.
\end{equation}
These estimates lead immediately to (\ref{110}).
\end{proof}

Notice however that
\begin{equation}
\label{111} \int_{\Lambda_N} A^2 (\lambda,1) dy = \infty
\end{equation}
because by the third line of (\ref{a55}) we have $A(\lambda,1) \sim
1/y^{1/2} $ for $  y> N^2.$
\bigskip

5. We need a few estimates of double sums as well.

\begin{Lemma}
\label{lem105} The sequence
\begin{equation}
\label{105.1} A_N= \sum_{k=0}^N \sum_{m=N+1}^\infty \frac{1}{m^2
-k^2}, \quad N=0, 1, 2, \ldots
\end{equation}
is bounded by $A_0 = \pi^2/6.$  Moreover,  $A_N$ is monotone
decreasing, and $\; \;\lim A_N =\pi^2/8.$
\end{Lemma}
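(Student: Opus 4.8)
The plan is to reduce everything to a clean formula for the consecutive differences $A_N - A_{N+1}$, from which monotonicity, the bound, and the limit all follow at once. First I would record that the double sum is well defined: for each fixed $k$ one has $\frac{1}{m^2-k^2}\sim m^{-2}$ as $m\to\infty$, so the inner sums converge, and the outer sum is finite. In particular $A_0 = \sum_{m\geq 1} m^{-2} = \pi^2/6$, which will serve as the asserted upper bound once monotonicity is in hand.

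The heart of the argument is a careful comparison of the index sets of $A_N$ and $A_{N+1}$. Passing from $A_N$ to $A_{N+1}$ deletes the row $m=N+1$ (summed over $0\leq k\leq N$) and inserts the column $k=N+1$ (summed over $m\geq N+2$), so that
$$
A_N - A_{N+1} = \sum_{k=0}^N \frac{1}{(N+1)^2 - k^2} - \sum_{m=N+2}^\infty \frac{1}{m^2 - (N+1)^2}.
$$
Writing $L=N+1$, I would evaluate each sum by partial fractions. The identity $\frac{1}{m^2-L^2} = \frac{1}{2L}\left(\frac{1}{m-L} - \frac{1}{m+L}\right)$ telescopes the second sum to $\frac{1}{2L}H_{2L}$, where $H_n$ is the $n$-th harmonic number, while $\frac{1}{L^2-k^2} = \frac{1}{2L}\left(\frac{1}{L-k} + \frac{1}{L+k}\right)$ turns the first sum into $\frac{1}{2L}\left(H_{2L-1} + \tfrac{1}{L}\right)$. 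Subtracting and using $H_{2L-1} - H_{2L} = -\tfrac{1}{2L}$ gives the clean identity
$$
A_N - A_{N+1} = \frac{1}{4(N+1)^2}.
$$

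Finally I would draw the three conclusions. Positivity of the difference shows $A_N$ is strictly decreasing, hence $A_N \leq A_0 = \pi^2/6$ for all $N$. Summing the telescoping identity yields $A_0 - A_N = \frac{1}{4}\sum_{n=1}^N n^{-2}$, so letting $N\to\infty$ gives $\lim A_N = \frac{\pi^2}{6} - \frac{1}{4}\cdot\frac{\pi^2}{6} = \frac{\pi^2}{8}$. I expect the only delicate part to be the bookkeeping: correctly identifying which terms are gained and lost between $A_N$ and $A_{N+1}$, and matching the harmonic-number tails in the two partial-fraction evaluations. Once the difference formula is pinned down, everything else is immediate.
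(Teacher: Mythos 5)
Your proof is correct and follows essentially the same route as the paper: both compute the consecutive difference via partial fractions and harmonic-number cancellation to get $A_N - A_{N+1} = \frac{1}{4(N+1)^2}$, then telescope. The bookkeeping of gained and lost index pairs and the evaluation of both sums check out.
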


\begin{proof}
We have $\displaystyle A_0 = \sum_{m=1}^\infty \frac{1}{m^2} =
\pi^2/6 \;$ and
$$
A_{N+1} - A_N = \sum_{m=N+2}^\infty \frac{1}{m^2 -(N+1)^2} -
\sum_{k=0}^N \frac{1}{(N+1)^2 -k^2}
$$
$$
= \frac{1}{2(N+1)} \sum_{m=N+2}^\infty \left (\frac{1}{m-(N+1)}
-\frac{1}{m+(N+1)} \right ) $$
$$ - \frac{1}{2(N+1)} \sum_{k=0}^N
\left ( \frac{1}{N+1-k} +\frac{1}{N+1+k} \right )
$$
$$
=\frac{1}{2(N+1)} \left (\sum_{\nu=1}^{2(N+1)} \frac{1}{\nu}
-\sum_{\nu=1}^{N+1} \frac{1}{\nu}  -\sum_{\nu=N+1}^{2N+1}
\frac{1}{\nu} \right  ) = -\frac{1}{4(N+1)^2}.
$$
Therefore,
$$
A_N = A_0 + \sum_{n=1}^N (A_n-A_{n-1}) = \frac{\pi^2}{6}-
\frac{1}{4}\sum_{n=1}^N \frac{1}{n^2} \to \frac{\pi^2}{6}-\frac{1}{4}
\cdot \frac{\pi^2}{6} =\frac{\pi^2}{8}.
$$

\end{proof}

\begin{Lemma}
\label{lem106} Let $H \in \mathbb{N}, \; 0 < H < N,$ and let $$
\Delta_H = \{(k,m): \; 0\leq k \leq N, \; m\geq N+1, \; m-k \leq H
\}. $$ Then
\begin{equation}
\label{106.2} \sigma (N,H) = \sum_{\Delta_H} \frac{1}{m^2 -k^2} \leq
\frac{H}{N}.
\end{equation}
\end{Lemma}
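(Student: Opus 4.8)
The plan is to exploit the factorization $m^2-k^2=(m-k)(m+k)$ and to reorganize the sum over $\Delta_H$ along the diagonals $m-k=\text{const}$. First I would observe that on $\Delta_H$ one always has $m\ge N+1$ and $k\le N$, so $m-k\ge 1$; setting $j=m-k$, the constraint $m-k\le H$ becomes $1\le j\le H$. For a fixed $j$ the points $(k,m)\in\Delta_H$ with $m-k=j$ are exactly those with $m=k+j$ and $N+1-j\le k\le N$ (the lower bound forced by $m\ge N+1$, the upper by $k\le N$). Since $j\le H<N$, this is a genuine range of positive integers containing \emph{precisely} $j$ values of $k$.

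Next I would estimate the contribution of a single diagonal. For any such pair one has $m+k\ge N+1$, simply because $m\ge N+1$ and $k\ge 0$. Hence each term satisfies $\frac{1}{m^2-k^2}=\frac{1}{j\,(m+k)}\le\frac{1}{j\,(N+1)}$, and summing over the $j$ admissible values of $k$ gives
\[
\sum_{\substack{(k,m)\in\Delta_H\\ m-k=j}}\frac{1}{m^2-k^2}\le j\cdot\frac{1}{j\,(N+1)}=\frac{1}{N+1}.
\]
The decisive point is that the number of lattice points on the diagonal $m-k=j$ is exactly $j$, which cancels the factor $1/j$ coming from $m-k=j$; this is what keeps each diagonal's contribution bounded by $1/(N+1)$ \emph{uniformly} in $j$.

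Finally, summing the diagonal estimates over $j=1,\dots,H$ yields
\[
\sigma(N,H)=\sum_{j=1}^{H}\ \sum_{\substack{(k,m)\in\Delta_H\\ m-k=j}}\frac{1}{m^2-k^2}\le\sum_{j=1}^{H}\frac{1}{N+1}=\frac{H}{N+1}\le\frac{H}{N},
\]
which is exactly the claimed bound (and in fact with the slightly sharper constant $H/(N+1)$). I do not expect a genuine obstacle here: the only points that require care are the exact count of lattice points on each diagonal and the clean lower bound $m+k\ge N+1$, after which the estimate collapses to a trivial geometric count.
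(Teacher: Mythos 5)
Your proof is correct and follows essentially the same route as the paper: both decompose $\Delta_H$ into the diagonals $m-k=j$ and use the fact that the $j$-th diagonal contains exactly $j$ lattice points to cancel the factor $1/(m-k)=1/j$. The only difference is cosmetic — the paper writes $\frac{1}{m^2-k^2}=\frac{1}{2m}\bigl(\frac{1}{m-k}+\frac{1}{m+k}\bigr)$ and estimates the two resulting sums separately, whereas your direct factorization $\frac{1}{(m-k)(m+k)}$ with $m+k\ge N+1$ is a bit cleaner and even yields the marginally sharper constant $H/(N+1)$.
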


\begin{proof}
Observe that $\Delta_H $ consists of points $(k,m)$ with integer
coordinates lying inside the triangle bounded by the lines $k=N, \;
m= N+1, m-k =H.$  Moreover,
\begin{equation}
\label{Delta} \Delta_H = \bigcup_{\nu=1}^H \ell_\nu,  \quad
\text{with} \quad \ell_\nu= \{(k,m)\in \Delta_H: \, m-k = \nu \},
\quad \#\ell_\nu =\nu,
\end{equation}
so  $ \# \Delta_H = H(H+1)/2. $ Since $ m \geq N+1, $ we have
$$
\sigma = \sum_{\Delta_H} \frac{1}{2m} \left (\frac{1}{m-k} +
\frac{1}{m+k} \right ) \leq \frac{1}{2(N+1)} \sum_{\Delta_H} \left
(\frac{1}{m-k} + \frac{1}{m+k} \right )
$$
$$
\leq \frac{1}{2(N+1)} \left (\sum_{\nu=1}^H \frac{1}{\nu} \#\ell_\nu
+  \frac{1}{N+1} \#\Delta_H \right ) = \frac{H}{2(N+1)} +
\frac{(H+1)H}{4(N+1)^2} \leq \frac{H}{N},
$$
which completes the proof.
\end{proof}

\end{document}